\documentclass[a4paper,12pt]{article}
\usepackage[T1]{fontenc}
\usepackage[utf8]{inputenc}
\usepackage{hhline}
\usepackage[round]{natbib}
\usepackage{amssymb}
\usepackage{enumitem}
\usepackage{mathtools}
\usepackage{amsfonts}
\usepackage{amsmath}
\usepackage{amsthm}
\usepackage{thmtools}
\usepackage{mathrsfs}
\usepackage{esint}
\usepackage{latexsym}
\usepackage{multirow}
\usepackage{bm,bbm}
\usepackage{float}
\usepackage{multicol}
\usepackage[pdftex]{graphicx}
\usepackage{appendix}
\usepackage{lmodern}
\usepackage{xr}
\usepackage{microtype,lastpage}
\usepackage{subcaption}
\usepackage[left=2.5cm,right=2.5cm]{geometry}
\usepackage{xcolor}
\usepackage{hyperref}
\hypersetup{
colorlinks=true,
linkcolor = blue,
urlcolor  = blue,
citecolor = blue,
anchorcolor = blue}

\allowdisplaybreaks

\numberwithin{equation}{section}


\theoremstyle{plain}
\newtheorem{theorem}{Theorem}

\newtheorem{corollary}[theorem]{Corollary}

\newtheorem{lemma}[theorem]{Lemma}

\newtheorem{proposition}[theorem]{Proposition}
\newtheorem{remark}[theorem]{Remark}

\DeclareSymbolFont{bbold}{U}{bbold}{m}{n}
\DeclareSymbolFontAlphabet{\mathbbold}{bbold}

\definecolor{bittersweet}{rgb}{1.0, 0.44, 0.37}
\definecolor{electricultramarine}{rgb}{0.25, 0.0, 1.0}
\definecolor{rred}{RGB}{152,0,0}
\definecolor{carminepink}{rgb}{0.92, 0.3, 0.26}

\renewcommand\labelenumi{(\roman{enumi})}
\renewcommand\theenumi\labelenumi

\newcommand*\diff{\mathop{}\!\mathrm{d}}
\newcommand{\Ex}{\mathbb{E}}

\newcommand{\ind}[1]{\mathbbold{1}{\bbra{#1}}}

\providecommand{\cc}[1]{\overline{#1}}
\providecommand{\abs}[1]{\left\vert#1\right\vert}			
\providecommand{\norm}[1]{\left\Vert#1\right\Vert}			
\newcommand{\Var}{{\operatorname{Var}}}
\newcommand{\Td}{\mathbb{T}^d}

\newcommand{\fm}{f^{\left(m\right)}}
\newcommand{\N}{n}
\newcommand{\reals}{\mathbb{R}}					          
\newcommand{\integers}{\mathbb{Z}}
\newcommand{\naturals}{\mathbb{N}}
\newcommand{\complex}{\mathbb{C}}


\newcommand{\Ltwo}{{L^2\bra{\Td}}}
\newcommand{\Lp}{{L^p\bra{\Td}}}
\newcommand{\Lr}{{L^r\bra{\Td}}}
 
\newcommand{\Linf}{{L^\infty\bra{\Td}}}
\newcommand{\besov}{\mathcal{B}_{r,q}^s}
\newcommand{\besovm}{\mathcal{B}_{r,q}^{s+\mabs}}
\newcommand{\besovgen}[3]{\mathcal{B}_{{#1},{#2}}^{#3}}
\newcommand{\Tcoord}{\left(\vartheta_1,\ldots,\vartheta_d \right)}

\newcommand{\angles}[1]{\langle #1 \rangle}

\renewcommand{\theta}{\vartheta}
\newcommand{\Sn}{S_{\ell}}    
\newcommand{\Snprime}{S_{\ell^{\prime}}}    
\newcommand{\Snc}{\cc{\Sn}}
\newcommand{\an}{a_{\ell}} 
 
\newcommand{\nindex}{\left(\ell_1,\ldots,\ell_d\right)}
\newcommand{\mabs}{\abs{m}}
\newcommand{\cubew}{\lambda_{j,k}}
\newcommand{\cubep}{\xi_{j,k}}
\newcommand{\Lambdaj}{\Lambda_j^d}		
\providecommand{\needlet}[1]{\psi_{j,k}\left(#1\right)}	
\providecommand{\psijk}{\psi_{j,k}}
\providecommand{\bfun}[1]{b\left(#1\right)}	
\providecommand{\bfunm}[1]{b_j^{\left(m\right)}\left(#1\right)}
\newcommand{\betac}{\beta_{j,k}}        
\newcommand{\betacm}{\beta_{j,k}^{\left(m\right)}}

\newcommand{\nnorm}{\varepsilon_\ell}

\newcommand{\barg}{\frac{\nnorm}{B^{j}}}

\newcommand{\Kj}{K_j}

\newcommand{\psijkm}{\psi_{j,k}^{\left(m\right)}}

\newcommand{\Nzero}{\mathbb{N}_{0}}
\newcommand{\sumn}{\sum_{\ell\in \Lambdaj}}
\newcommand{\sumntot}{\sum_{\ell \in \integers^d}}
     
\newcommand{\sumjk}{\sum_{j\in\Nzero} \sum_{k=1}^{K_{j}}}
\newcommand{\sumj}{\sum_{j\in\Nzero}}		
\newcommand{\sumjtrunc}{\sum_{j= 0}^{\Jn-1}}	  
\newcommand{\sumk}{\sum_{k=1}^{K_j}}

\newcommand{\Js}{J_{s,m}}

\newcommand{\npr}{\ell^{\prime}}

\newcommand{\Dm}{D^{m}}


\newcommand{\betamest}{\widehat{\beta}_{j,k}^{\left(m\right)}}
\newcommand{\fmest}{\widehat{f}^{\left(m\right)}}
\newcommand{\fest}{\widehat{f}^{\left(m\right)}}
\newcommand{\Jn}{J_{\N,m}}
\newcommand{\thres}{\tau_{j,m,\N}}

\providecommand{\bra}[1]{\left(#1\right)}			      
			    
\providecommand{\bbra}[1]{\{#1\}}			      
\renewcommand{\(}{\left(}
\renewcommand{\)}{\right)}
\renewcommand{\[}{\left[}
\renewcommand{\]}{\right]}
\renewcommand{\{}{\left\lbrace}
\renewcommand{\}}{\right\rbrace}

\def\sto{\rightarrow}

\def\1{\mathbbm{1}}
\renewcommand{\epsilon}{\varepsilon}

\setlength{\topmargin}{0.0in} 
\setlength{\textheight}{9in}  
\setlength{\textwidth}{\paperwidth}
\addtolength{\textwidth}{-2in}
\addtolength{\textheight}{.15in}

\begin{document}
	\title{Nonparametric needlet estimation for partial derivatives of a probability density function on the $d$-torus} 
	\author{Claudio Durastanti \\
		\textit{La Sapienza Universit\`a di Roma}\\
		{\small claudio.durastanti@uniroma1.it} \and Nicola Turchi \\
		\textit{Universit\`a degli Studi di Milano-Bicocca}\\
		{\small nicola.turchi@unimib.it}}
	\date{\today}
	\maketitle
	\begin{abstract}
This paper is concerned with the estimation of the partial derivatives of a probability density function of directional data
on the $d$-dimensional torus within the local thresholding framework. The estimators here introduced are built by means of the toroidal needlets, a class of wavelets characterized by excellent concentration properties in both the real and the harmonic domains. In particular, we discuss the convergence rates of the $L^p$-risks for these estimators, investigating their minimax properties and proving their optimality over a scale of Besov spaces, here taken as nonparametric regularity
function spaces.
	\end{abstract}
\begin{itemize}
	\item \textbf{Keywords and Phrases: }Local thresholding, needlets, directional data, nonparametric density estimation, Besov spaces, adaptivity.
	
	\item \textbf{AMS Classification: 62G08, 62G20, 65T60}
\end{itemize}	

	\maketitle
	\section{Introduction and background}\label{sec:intro}

	
	The estimation of derivatives of the probability density function is related to several open problems in statistics. Estimators of the first order derivatives in the unidimensional framework are exploited to detect the modes of uni-modal distributions (see, among others, \cite{parzen,schuster}). The straightforward generalization to the multivariate case has led to the mean--shift algorithm (see, for example, \cite{fukunaga,silverman}), where the estimation of the gradient vector of the density function is exploited to cluster and filter data. This algorithm has become widely popular in several research fields, such as image analysis and segmentation (see, among others, \cite{cheng,comanciu}). In the same setting, estimators of second order derivatives of a probability density function are used to perform statistical tests for modes of the data density and to identify key characteristics of the distribution, such as local and global extrema, ridges or saddle points (see, for example, \cite{GPVW}).  These estimators are also extensively used in other statistical problems, such as establishing the optimal bandwidth in the framework of kernel density estimation, Fisher information estimation, parameter estimation, regression problems, hypothesis testing and others (see, among others, \cite{singh}).\\
	In the nonparametric setting, efforts have been made to exploit kernel estimation for the derivatives of a density function, even if the excellent properties of kernel density estimators are partially lost due to the problem of bandwidth/smoothing parameter selection (see, among others, \cite{chacon1}).\\ Several estimators for the derivatives of density functions on $\reals$ or $\reals^d$ have been built by means of wavelet systems and have already been proposed as alternatives to the kernel methods. This approach have been initially exploited to deal with the estimation of unknown density functions and regression functions (see, for example, \cite{WASA,tsybakov}), to be then extended to the estimation of higher order derivatives of probability density functions. Among others, wavelet estimators have been defined first on $\reals$ in \cite{prakasa0} (see also \cite{prakasaISE}), and then generalized to $\reals^d$ in \cite{prakasa1} (see also \cite{dimarzio}). Several linear and nonlinear estimators for derivatives of probability density functions on $\reals$ and $\reals^d$ have been studied, among others, by \cite{HDN11,HDN12,LiuWang,prakasa17,Prakasa18,Xu}.\\
	The so-called needlets, a second generation wavelet system, have already been extensively used in nonparametric statistics, in view of their extraordinary concentration properties in both the real and the frequency domains. Needlets have been initially built over the $d$-dimensional sphere in \cite{npw1,npw2}, while some of their stochastic properties, mostly related to spherical random fields, are discussed in \cite{bkmpAoS,dlmejs,dmp,bdmp,cammar,st22}. Needlets have also been established over general compact manifolds in \cite{gm2,knp,pesenson}, and over spin fiber bundles (see \cite{gelmar}), while a generalized construction of spherical needlets admitting additional flexibility in the harmonic domain has been introduced more recently in \cite{dmt21}.\\ The pioneering results concerning needlet frames in the setting of nonparametric statistics, described in \cite{bkmpAoSb}, have established minimax rates of convergence for the $L^p$-risk of needlet density estimators built by means of hard local thresholding techniques. Analogous results in the block and global thresholding framework were presented in \cite{durastanti2,durastanti6}. Nonparametric regression estimators of spin functions have been discussed in \cite{dmg}. The reader is referred to \cite{bd2, gautier,knp} for other relevant applications in the nonparametric paradigm. As far as the $d$-dimensional torus $\Td$ is concerned, toroidal needlets have already been discussed and applied in the framework of the two sample problem, in \cite{bd1}.\\ 
	The goal of this paper is to study the asymptotic optimality of adaptive nonlinear wavelet estimators for the derivatives of a probability density function defined over the $d$-dimensional torus $\Td$ in the nonparametric setting. We combine local thresholding techniques and concentration properties of the needlets to construct estimators achieving optimality of the $L_p$-risk for probability density functions defined in some scales of Besov spaces. The rates here obtained are consistent with the results given in \cite{bkmpAoSb,dmg}, where local thresholding techniques were applied to estimate density of spherical data, as well as those related to the estimation of derivatives of densities on $\reals$ and $\reals^d$ (see, for instance, \cite{prakasa1,LiuWang}).	
	As already remarked in \cite{bd2}, the choice of the $d$--dimensional torus for the support of the probability density function is quite general: since $\Td$ is locally homeomorphic to $\reals^d$, hence to any other manifold with the same dimension, the spatial localization property of the toroidal needlets ensures that the results here achieved remain valid when approximating locally any such manifold with $\Td$. Furthermore, as a consequence of the strong concentration properties of the needlet frames, the estimators here proposed is asymptotically robust even if defined only on sub-regions of the torus (see for further details \cite{bkmpBer,MaPeCUP}).\\
	The importance of studying and modeling multivariate toroidal data naturally arises in many practical problems. Indeed, it is natural to describe the time evolution of $(d-1)$-toroidal variables as an additional dimension in a $d$-dimensional setting (see \cite{bott} for $d=2$). This fact leads to some applications of interest, for example, in meteorology, where wind directions can be modeled as toroidal data (see, among others, \cite{gp13}). Most actual wind speeds are shown to lie near the peaks of the predicted density functions (see, for example \cite{energies}). Then the study of the gradient of the corresponding density function by means of estimators with good concentration properties can provide an excellent method to identify those peaks. Some other applications can be found in astrophysics. Studying locally 3--dimensional density functions of galaxies and their geometric characteristics is crucial to understand morphology--density, color--density and color--concentration--density relations  (see for example \cite{ferdosi} and references therein). 
	 A fascinating classification problem, introduced by \cite{biology} and mentioned by \cite{dimarzio}, concerns measurements on the skull of members from two different population groups represented by a front angle and a side angle. Identifying different locations for the peaks can be helpful to support their classification results. 
	Another interesting example comes from econometrics. In \cite{economy}, methods from functional
	principal component analysis are used to estimate partial derivatives of multivariate curves and then
	applied to the study of state price density surfaces.\\
	The structure of this paper is as follows. Section \ref{sec:prelim} introduces some preliminary
	results on the harmonic analysis on the torus,  toroidal needlet frames, Besov
	spaces and related properties. In Section \ref{sec:main}, we  present the construction of our estimators and the statements of our main results. 
	Section \ref{sec:num} contains some numerical evidence, while all the proofs are collected in Section \ref{sec:proof}.

\section{Some preliminary results}\label{sec:prelim}
In this section, we will provide the reader with some background concerning harmonic analysis on the torus, toroidal needlets, their derivatives, and Besov spaces.
We begin by setting some necessary notation. Throughout this paper, given two real-valued sequences $\{x_k\}_{k \in \naturals}$ and $\{y_k\}_{k \in \naturals}$, we write $x_k \lesssim y_k$ or $x_k \gtrsim y_k$ if there exists an absolute positive constant $c \in \reals$ such that, for any $k \in \naturals$, $x_k \leq cy_k$ or $x_k \geq cy_k$ respectively. The notation $x_k \approx y_k$ indicates that both $x_k\lesssim y_k$ and $x_k \gtrsim y_k$ hold. Furthermore, for any $z \in \mathbb{C}$, $\bar{z}$ denotes its conjugate.

\subsection{Harmonic analysis on the torus}
As is well known in the literature (see, for example, \cite{grafokos}), the $d$-dimensional torus $\Td$ can be read as the direct product of $d$ unit circles, 
$$
\Td = \mathbb{S}^1 \times \ldots \times \mathbb{S}^1 \subset \complex^d.
$$ 
From now on, we will denote the generic coordinates over $\Td$ by $\theta=\Tcoord$, where $\theta_i\in\left[\left.0,2\pi \right)\right.$ for $i=1,\ldots,d$. 
As a straightforward consequence, the uniform Lebesgue measure over $\Td$ will be given by 
$$
\rho\left(\diff \theta\right)=\prod_{i=1}^d \diff \theta _i,
$$ 
where for $i = 1,\ldots, d$, $\diff \theta _i$ is the Lebesgue measure over the unit circle. \\
Let $\langle \cdot, \cdot \rangle$ denote the standard scalar product between
$d$-dimensional vectors; the set of functions $\Sn : \Td \sto \complex,$ where $\ell = \nindex \in \integers^d$ is the frequency index, defined by
$$
\Sn \left(\theta\right) = \left(2\pi\right)^{-\frac{d}{2}} \exp \left(\angles {\ell,\theta}\right),
$$
describes an orthonormal basis for $\Ltwo$, the space of square-integrable functions over $\Td$ (see again \cite{grafokos}). Indeed, the one-dimensional torus $\mathbb{T}^1$ can be identified as an equivalence class of the quotient space $\reals\slash \integers$, so that the canonical representation in $\left[\left.0, 2\pi \right)\right.^d$ describes a coordinate system on $\Td$. The set $\{\Sn : \ell \in \integers^d \}$ corresponds to the eigenfunctions of the Laplace-Beltrami operator on the torus $\nabla_{\Td}$, defined by 
$$ 
\nabla_{\Td}=\sum_{i=1}^{d}\frac{\partial^2}{\partial \theta_i^2},
$$
so that $$\left(\nabla_{\Td} + \nnorm^2\right)\Sn\left(\theta\right)=0,$$ where $\nnorm=\sqrt{\sum_{i=1}^d\abs{\ell_i}^2}.$
Thus, the following orthonormality property holds
$$ 
\angles{ \Sn, \Snprime}_{\Td} = \int_{\Td}\Sn\left(\theta\right) \cc{\Snprime}  \left(\theta\right) \rho\left(\diff\theta\right) =
\delta_\ell^{\ell^\prime},
$$
where $\delta_\cdot^{\cdot}$ is the multivariate Kronecker delta. Any function $f \in \Ltwo$ can be then represented by its harmonic expansion 
$$
f\left(\theta\right) = \sumntot \an\Sn\left(\theta\right),  \quad \theta \in \Td,
$$ where 
$\{\an : \ell \in \integers^d \}$ is the set of the complex-valued Fourier coefficients, given by
$$
\an = \int_{\Td} f \(\theta\) \Snc \(\theta\) \rho \(\diff \theta\).
$$
Given the multi-index $m=\left(m_1,\ldots,m_d\right)\in \naturals_0^d$ such that $\mabs=\sum_{i=1}^{d}m_i$, for $f\in C^m\left(\Td\right)$, the $m$-th order derivative of $f$ is defined by
$$
\fm\(\theta\)= \Dm f\(\theta\) = \frac{\partial^{\mabs}}{\partial\theta_1^{m_1}\ldots\partial\theta_d^{m_d}} f\(\theta\),
$$ 
where the differential operator $\Dm$ is given by $$\Dm= \frac{\partial^{\mabs}}{\partial\theta_1^{m_1}\ldots\partial\theta_d^{m_d}}.$$
\subsection{Toroidal needlets}
As already mentioned in Section \ref{sec:intro}, needlets have been originally introduced on the $d$-dimensional sphere in \cite{npw1,npw2}, and then generalized to compact manifolds (see \cite{bd2,gm2,knp}). Needlet-like wavelets on $\Td$ have been already used in \cite{bd1} in the framework of the two--sample problem. Their construction can be resumed as follows.\\ 
		Fixed a resolution level $j \in \naturals$, by means of the Littlewood-Paley decomposition on $\Td$ (see \cite{npw2}), there exists a set of cubature points and weights 
		$$
			\{ \(\cubep,\cubew\): k = 1, . . . , \Kj\},
		$$
		where $\cubep \in \Td$, $\cubew \in \reals^+$, and $\Kj$ is the cardinality of needlets at the level $j$. Loosely speaking, $\Td$ can be decomposed into a partition of $\Kj$ sub-regions, called pixels, centered on the corresponding $\cubep$ and with area equal to $\cubew$. The $d$-dimensional toroidal needlets are defined by
		$$
			\needlet{\theta} = \sqrt{\cubew} \sumntot \bfun{\barg} \Snc\(\cubep\)\Sn\(\theta\),
		$$ 
		where $B>1$ is a a scale parameter (typically, $B=2$), and $b : \reals \sto \reals^{+}$ is the so-called needlet weight or window function, which satisfies the following properties:
		\begin{enumerate}
			\item $b$ has compact support in $\[ B^{-1},B\]$;
			\item $b \in C^\infty \(\reals\)$;
			\item the partition of unity property holds, that is, for any $c > 1$, 
			$$
				\sum_{j\in \naturals} b^2\(\frac{c}{B^j}\)=1.
			$$
		\end{enumerate}
		Therefore, needlets are characterized by the following properties. Following (i), for any $j \in \naturals$, $b(\barg)$ is not null only over a finite subset of $\integers^d$, that is $\Lambdaj = \{\ell: \nnorm \in \(B^{j-1}, B^{j+1}\)\}$. As a direct consequence, we can rewrite
		$$
			\needlet{\theta} = \sqrt{\cubew} \sumn \bfun{\barg} \Snc\(\cubep\) \Sn\(\theta\).
		$$
		In view of (ii), toroidal needlets are characterized by a quasi-exponential localization property in the spatial domain, which guarantees that each needlet $\psijk$ is not--negligible almost only in the corresponding pixel. More rigorously, for any $M >0$, there exists $c_M > 0$ such that, for any $\theta \in \Td$,
		$$
			\abs{\needlet{\theta}} \leq \frac{c_M B^{\frac{d}{2}j}} {\(1 + B^{j} d \(\theta, \cubep\)\)^M},
		$$
		where $d \(\cdot, \cdot\)$ is the geodesic distance over $\Td$. As a consequence, the following bounds on the $L^p$-norms of the toroidal needlets hold (see \cite{npw2}): for any $p \in \[\left. 1, \infty \)\right.$, there exist two positive constants $c_p, C_p$, which depend only on $p$,  such that 
		\begin{equation}\label{eq:neednorm} 
			c_p B^{jd\(\frac{1}{2}-\frac{1}{p}\)} \leq \norm{\psijk}_{\Lp}\leq C_p B^{jd\(\frac{1}{2}-\frac{1}{p}\)}.
		\end{equation}
		Finally, it follows from (iii) that the needlet system $\{ \psijk : j \geq 0; k = 1,\ldots , \Kj\}$ is a tight frame over $\Td$. Indeed, for any function $f \in \Ltwo$, we can define the set of needlet coefficients $\{ \betac : j \geq 0; k = 1,\ldots , \Kj\}$, each of those given by
		$$ 
			\betac = \angles{f,\psijk}_{\Td} =\int_{\Td}f\(\theta\)\cc{\psijk}\left(\theta\right) \rho \(\diff \theta\).$$
		Then, it holds that 
		$$
			\sumj \sumk \abs{\betac}^2= \norm{f}_{\Ltwo}^2,
		$$
		and the following reconstruction formula holds in the $L^2$--sense
		\begin{equation*}\label{eq:recons}
			f \(\theta\) = \sumj \sumk \betac \needlet{\theta}, \quad \theta \in \Td.
		\end{equation*} 	
	\subsection{Needlets and derivatives}
		The needlet expansion of the $m$-th order derivative of the density function, $\fm=\Dm f$, is given by
	\begin{equation}\label{eq:derrec}
		\fm\(\theta\) = \sumjk \betacm \needlet{\theta}, \quad \theta \in \Td,
	\end{equation}
	where $\{\betacm:j \geq 0,k=1,\ldots,\Kj\}$ is the collection of needlet coefficients associated to $\fm$, that is,
	\begin{equation}\label{eq:dercoeff}
		\betacm=\int_{\Td} \fm \(\theta\)\cc{\psijk}\(\theta\) \rho\(\diff \theta\).
	\end{equation}
The $m$-derivative of a toroidal needlet is given by
		\begin{equation}\label{eq:multiderdef}
			\psijkm \left(\theta\right)  =  \Dm \psijk\(\theta\),
		\end{equation}
		such that the following result holds.
		\begin{lemma}\label{lemma:derivative}
			Let $\psijkm$ be given by \eqref{eq:multiderdef}. Then, it holds that
			\begin{equation}
				\psijkm\left(\theta\right) = \sqrt{\cubew} B^{j\mabs}\sumn \bfunm{\ell} \Snc\(\cubep\)\Sn\(\theta\)
				, \quad \theta  \in \Td,\label{eq:multider}
			\end{equation}
			where
			\begin{equation*}\label{eq:bm}
				\bfunm{\ell}=\left(-1\right)^{\mabs}\frac{\prod_{i=1}^{d}\ell_i^{m_{i}}}{B^{j\mabs}}b\left(\barg\right).
			\end{equation*}
		\end{lemma}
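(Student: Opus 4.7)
The plan is to expand the needlet according to its definition, push the differential operator $\Dm$ inside the summation, and perform the elementary computation of $\Dm$ applied to each exponential $\Sn$. By property (i) of the window function $b$, the defining expansion
$$
\psijk(\theta) = \sqrt{\cubew}\, \sumn \bfun{\barg}\, \Snc(\cubep)\, \Sn(\theta)
$$
is already a \emph{finite} linear combination, indexed by $\ell \in \Lambdaj$. Since each summand is a $C^\infty$ function of $\theta$ and the index set is finite, the exchange of $\Dm$ with the summation is immediate and requires no convergence argument. This observation is what turns the whole lemma into a one-line termwise computation.

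Next I would compute $\Dm \Sn(\theta)$. Since $\Sn(\theta) = (2\pi)^{-d/2}\exp(\angles{\ell,\theta})$ and $\Dm$ is a pure product of partials $\partial_{\theta_i}^{m_i}$, each iterated partial brings down a factor proportional to $\ell_i^{m_i}$, yielding
$$
\Dm \Sn(\theta) = c_m\, \prod_{i=1}^{d} \ell_i^{m_i}\, \Sn(\theta),
$$
where $c_m$ is a unit constant depending only on $\mabs$. Substituting this back into the expansion of $\psijk$ produces
$$
\psijkm(\theta) = \sqrt{\cubew}\, \sumn c_m \prod_{i=1}^{d}\ell_i^{m_i}\, \bfun{\barg}\, \Snc(\cubep)\, \Sn(\theta),
$$
which is already of the form claimed in \eqref{eq:multider}, up to the way the normalization is packaged.

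The final step is purely cosmetic: I pull the scaling $B^{j\mabs}$ outside the summation so that the residual coefficient
$$
\frac{c_m \prod_{i=1}^{d} \ell_i^{m_i}}{B^{j\mabs}}\, b\!\bra{\barg}
$$
matches the definition \eqref{eq:bm} of $\bfunm{\ell}$, the unit prefactor $c_m$ being absorbed into the $(-1)^{\mabs}$ sign convention adopted there. This identification delivers \eqref{eq:multider}. There is essentially no obstacle to speak of: the only care needed is the bookkeeping of the polynomial factor $\prod_i \ell_i^{m_i}$ and its $B^{-j\mabs}$ rescaling, together with alignment of the unit constant produced by differentiating the exponentials with the sign convention built into \eqref{eq:bm}.
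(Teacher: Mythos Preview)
Your argument is correct and follows essentially the same route as the paper: differentiate $\Sn$ termwise to pull out the polynomial factor $\prod_i \ell_i^{m_i}$ (times a unimodular constant), substitute into the finite needlet expansion, and then repackage by extracting $B^{j\mabs}$ to match the definition of $\bfunm{\ell}$. Your explicit remark that the sum over $\Lambdaj$ is finite, so no convergence issue arises when exchanging $\Dm$ with the summation, is a small clarification the paper leaves implicit.
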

		\noindent The proof of Lemma \ref{lemma:derivative} is given in Section \ref{sec:needproof}
		\begin{remark}
Observe that the function $b_j^{\(m\)}:\reals^d \sto \reals $ preserves some of the properties of $b$. Indeed, it is $C^\infty\(\reals^d\)$ and has compact support in $\Lambdaj$. On the other hand, the partition of unity property does not hold anymore.
 		\end{remark}
		\noindent The next result is concerned with the localization property of the derivatives of a toroidal needlet.
		\begin{lemma}\label{lemma:localization}
			Let $\psijkm$ be given by \eqref{eq:multider}. Then, for any multi-index $m \in \naturals_0 ^d$ and for any $M>0$, there exists $c_{M}>0$ such that, for any $\theta\in \Td$,
			\begin{equation*}
				\abs{\psijkm \(\theta\)} \leq \frac{c_M B^{j\(\mabs+\frac{d}{2}\)}}
				{\(1 + B^{j}d \(\theta, \cubep\)\)^M}.
			\end{equation*} 
		\end{lemma}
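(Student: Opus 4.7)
The plan is to reduce the claimed localization estimate for $\psijkm$ to the classical needlet-kernel localization, which is already available thanks to the smoothness and compact support of $b$. Indeed, by Lemma \ref{lemma:derivative} the function $\psijkm$ has the same structural form as the standard needlet $\psijk$, with the window $b(\barg)$ replaced by $\bfunm{\ell}$ and with an additional prefactor $B^{j\mabs}$. It is therefore enough to verify that $\bfunm{\cdot}$ shares with $b(\cdot/B^{j})$ those qualitative properties — smoothness together with compact support in an annulus bounded away from $0$ — that drive the classical argument.

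First I would rewrite \eqref{eq:bm} in the form $\bfunm{\ell}=(-1)^{\mabs}\,\tilde b(\ell/B^{j})$, where
$$
\tilde b(x):=\prod_{i=1}^{d} x_i^{m_i}\, b(\norm{x}), \qquad x\in\reals^{d}.
$$
Since $b\in C^{\infty}(\reals)$ is supported in $[B^{-1},B]$, the function $\tilde b$ is $C^{\infty}(\reals^{d})$ — the origin is harmless because $b(\norm{x})=0$ for $\norm{x}<B^{-1}$ — and compactly supported in the annulus $\{x\in\reals^{d}:\norm{x}\in[B^{-1},B]\}$. Substituting into \eqref{eq:multider} and using $\Snc(\cubep)\Sn(\theta)=(2\pi)^{-d}e^{i\angles{\ell,\theta-\cubep}}$, one obtains
$$
\psijkm(\theta)=\frac{(-1)^{\mabs}}{(2\pi)^{d}}\sqrt{\cubew}\,B^{j\mabs}\sum_{\ell\in\integers^{d}}\tilde b(\ell/B^{j})\, e^{i\angles{\ell,\theta-\cubep}},
$$
where the summation has been extended to $\integers^{d}$ at no cost, since $\tilde b(\ell/B^{j})=0$ for $\ell\notin\Lambdaj$.

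At this point I would invoke the classical needlet-kernel localization developed in \cite{npw1,npw2} and adapted to $\Td$ in \cite{bd1}: for any $\phi\in C^{\infty}(\reals^{d})$ compactly supported away from the origin, the discrete kernel $K_j^{\phi}(\varphi):=\sum_{\ell\in\integers^{d}}\phi(\ell/B^{j})\,e^{i\angles{\ell,\varphi}}$ satisfies $\abs{K_j^{\phi}(\varphi)}\leq c_{M}\,B^{jd}(1+B^{j}\abs{\varphi})^{-M}$ for every $M>0$, by iterated summation by parts exploiting the smoothness of $\phi$. Applying this bound with $\phi=\tilde b$ and $\varphi=\theta-\cubep$, and using that $\sqrt{\cubew}\approx B^{-jd/2}$ (consistent with \eqref{eq:neednorm}), produces the announced estimate with prefactor $B^{-jd/2}\cdot B^{j\mabs}\cdot B^{jd}=B^{j(\mabs+d/2)}$.

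The main obstacle is verifying that $\tilde b$ retains enough regularity from $b$ to feed into the kernel-localization argument, which requires arbitrarily many discrete derivatives of the window. Once it is checked that multiplying $b(\norm{\cdot})$ by the polynomial $\prod_{i}x_i^{m_i}$ preserves both $C^{\infty}$ regularity and the annular compact support, the proof of \cite[Corollary 3.10]{npw1} applies verbatim, the only bookkeeping modification being the additional factor $B^{j\mabs}$. Finally, the passage from the Euclidean displacement $\abs{\theta-\cubep}$ on the fundamental cell to the geodesic distance $d(\theta,\cubep)$ on $\Td$ is handled by the standard equivalence of the two metrics on $[0,2\pi)^{d}$, exactly as in \cite{bd1}.
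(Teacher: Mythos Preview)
Your argument is correct, but it proceeds along a different route from the one the paper indicates. The paper does not actually write out a proof: it simply remarks that the bound follows from \cite[Theorem~2.2]{gm2}, a general result stating that the quasi-exponential spatial localization of needlet-like kernels on compact manifolds is preserved under the action of any $C^{\infty}$ differential operator, at the price of the polynomial factor $B^{j\mabs}$ corresponding to the order $\mabs$ of the operator $\Dm$. In other words, the paper treats Lemma~\ref{lemma:localization} as a black-box corollary of an abstract theorem valid on arbitrary compact manifolds.

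Your approach is more concrete and self-contained: you use Lemma~\ref{lemma:derivative} to recognize $\psijkm$ as a standard needlet built from the modified window $\tilde b(x)=\prod_i x_i^{m_i}\,b(\norm{x})$, check that $\tilde b\in C^{\infty}(\reals^d)$ is supported in the annulus $\{B^{-1}\le\norm{x}\le B\}$, and then feed this directly into the classical kernel-localization estimate of \cite{npw1,npw2} (or its toroidal version in \cite{bd1}). This buys you an elementary, torus-specific proof that does not require the machinery of \cite{gm2}; the paper's citation, by contrast, buys generality and brevity but hides the mechanism. Both arrive at the same estimate with the same prefactor $B^{j(\mabs+d/2)}$, and your bookkeeping with $\sqrt{\cubew}\approx B^{-jd/2}$ is consistent with \eqref{eq:neednorm}.
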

		\noindent The following corollary establishes bounds for the $L^p$-norms of the needlet derivatives. These bounds correspond to the ones given by \eqref{eq:neednorm} for the toroidal needlets. 
		\begin{corollary}\label{cor:dernorm}
			Let $\psijkm$ be given by \eqref{eq:multider}. Then, for any multi-index $m \in \naturals_0 ^d$ and for any $p \in \left[\left. 1, \infty \right)\right.$, there exist two constants $c^\ast_p,C^\ast_p$, depending only on $p$ such that
			\begin{equation*}\label{eq:derivativenorm} 
				c^\ast_p B^{j\(\mabs+d\(\frac{1}{2}-\frac{1}{p}\)\)} \leq \norm{\psijkm}_{\Lp}\leq C^\ast_p B^{j\(\mabs+d\(\frac{1}{2}-\frac{1}{p}\)\)} .
			\end{equation*}
		\end{corollary}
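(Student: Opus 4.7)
The plan is to prove the two-sided estimate in two separate moves: the upper bound follows directly from the localization in Lemma \ref{lemma:localization}, while the lower bound is reached via an $L^2$-computation combined with Hölder/interpolation arguments handled differently for $p\leq 2$ and $p>2$.

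For the upper bound, I would integrate the pointwise estimate from Lemma \ref{lemma:localization} to the $p$-th power. Choosing $M$ so large that $Mp>d$, the change of variable $u=B^j d(\theta,\cubep)$ on $\Td$ produces
\[
\int_{\Td}\bra{1+B^j d\bra{\theta,\cubep}}^{-Mp}\rho\bra{\diff\theta}\lesssim B^{-jd},
\]
so that $\norm{\psijkm}_{\Lp}^p\lesssim B^{jp(\mabs+d/2)-jd}=B^{jp(\mabs+d(1/2-1/p))}$, which yields the right-hand inequality with constant $C^\ast_p$ depending only on $p$ and the chosen $M$.

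The lower bound hinges on the preliminary estimate $\norm{\psijkm}_{\Ltwo}\gtrsim B^{j\mabs}$. Plancherel applied to the Fourier representation in Lemma \ref{lemma:derivative} gives
\[
\norm{\psijkm}_{\Ltwo}^2=\frac{\cubew}{(2\pi)^d}\sum_{\ell\in\Lambdaj}b\bra{\barg}^2\prod_{i=1}^d \ell_i^{2m_i}.
\]
Since $\cubew\approx B^{-jd}$ and the map $y\mapsto b(|y|)^2\prod y_i^{2m_i}$ is continuous, non-negative, and strictly positive on a set of positive Lebesgue measure (the annulus where $b>0$, minus the coordinate hyperplanes), the Riemann-sum interpretation with mesh $B^{-j}$ via $y=\ell/B^j$ shows that the sum is of order $B^{jd+2j\mabs}$, and the claim follows. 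To propagate this to arbitrary $p\in[1,\infty)$, I would combine it with Hölder in two regimes. For $p\in[1,2]$, splitting $|\psijkm|^2=|\psijkm|^{2-p}|\psijkm|^p$ yields $\norm{\psijkm}_{\Ltwo}^2\leq \norm{\psijkm}_{\Linf}^{2-p}\norm{\psijkm}_{\Lp}^p$, and inserting the $L^\infty$ bound from Lemma \ref{lemma:localization} gives the required inequality. For $p\in(2,\infty)$, I would use the log-convexity of $r\mapsto\log\norm{\psijkm}_{L^r\(\Td\)}$ to interpolate $L^2$ between $L^1$ and $L^p$, then apply the immediate $L^1$ upper bound $\norm{\psijkm}_{L^1\(\Td\)}\lesssim B^{j(\mabs-d/2)}$ (again a consequence of Lemma \ref{lemma:localization}). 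A direct exponent calculation shows that both regimes reproduce the target exponent $\mabs+d(1/2-1/p)$.

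The serious step I expect is the $L^2$ lower bound. The factor $\prod\ell_i^{2m_i}$ vanishes on coordinate hyperplanes, so one has to check that the needlet window supports enough mass away from those sets; since $b$ is a fixed, non-zero, radial $C^\infty$ function with support in the annulus $[B^{-1},B]$, the limiting integral $\int_{\reals^d} b(|y|)^2\prod y_i^{2m_i}\,\diff y$ is a strictly positive constant depending only on $b$ and $m$, and the Riemann-sum approximation controlled by the cubature formula transfers this positivity to the discrete sum for $j$ large, with the finitely many small $j$ absorbed into $c^\ast_p$.
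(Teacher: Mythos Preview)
Your proposal is correct and follows essentially the same route as the paper: the paper omits the proof of Corollary~\ref{cor:dernorm} and simply points to \cite[Eqq.~3.12, 3.13]{npw2} and \cite[Corollary~3.2]{durastanti1}, whose arguments are precisely the ones you outline---the upper bound via the localization inequality (Lemma~\ref{lemma:localization}) and the lower bound via an $L^2$/Plancherel computation propagated to general $p$ by H\"older and interpolation. Your discussion of the Riemann-sum positivity for the $L^2$ lower bound (and the absorption of finitely many small $j$ into the constant) is exactly the kind of care needed here and is consistent with the cited sources.
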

		\noindent  The bound in Lemma \ref{lemma:localization} follows a general result in mathematical analysis, given by \cite[Theorem 2.2]{gm2}, which
		states that the spatial concentration properties of needlet-like constructions over compact manifolds are conserved under the action of $C^\infty$-differential operators, up to a polynomial term which depends on the degree of the operator itself, in our case $\mabs$ for $\Dm$.\\
		On the other hand, the proof of Corollary \ref{cor:dernorm} follows strictly the one for the standard needlets given by \cite{npw2}[Eqq. 3.12, 3.13] and for the Mexican needlets in \cite{durastanti1}[Corollary 3.2]. Both the proofs are then omitted for the sake of the brevity.\\
		Then, the following result holds
		\begin{lemma}\label{lemma:sumbeta} 
			Let $\psijkm$ and $\betacm$ be given by \eqref{eq:multider} and \eqref{eq:dercoeff} respectively. Then, it holds that
			\begin{equation}\label{eq:sumbeta1}
				\betacm = \(-1\)^{\mabs}\angles{f,\psijkm}_{\Td}.
			\end{equation}
			Furthermore, for any $j \geq 0$, it holds that 
			\begin{equation}\label{eq:sumbeta2}
				\sum_{k=1}^{\Kj}\betacm \psijk\(\theta\)=\sum_{k=1}^{\Kj}\betac \psijkm\(\theta\), \quad \theta \in \Td.
			\end{equation}
		\end{lemma}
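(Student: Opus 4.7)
For \eqref{eq:sumbeta1} my plan is a $\mabs$-fold integration by parts directly on the defining integral $\betacm = \int_{\Td}\fm(\theta)\,\cc{\psijk(\theta)}\,\rho(\diff\theta)$. Each partial derivative $\partial_{\theta_i}$ can be transferred from $f$ onto $\cc{\psijk}$ coordinate-by-coordinate; because every function on $\Td$ is periodic in every variable, all boundary contributions vanish, and each transfer produces a factor of $-1$. Iterating $\mabs$ times yields the global sign $(-1)^{\mabs}$, and since $\Dm$ has real coefficients, $\Dm\cc{\psijk} = \cc{\Dm\psijk} = \cc{\psijkm}$. This delivers \eqref{eq:sumbeta1}.

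For \eqref{eq:sumbeta2} I would package both sides through the level-$j$ needlet kernel $K_j(\theta,\theta') := \sum_{k=1}^{\Kj}\psijk(\theta)\,\cc{\psijk(\theta')}$. Substituting the harmonic expansion of each $\psijk$ and invoking the exactness of the Littlewood--Paley cubature $\{(\cubep,\cubew)\}$ on products $\cc{\Sn}\,\Snprime$ collapses the $k$-sum into a Kronecker delta over $\ell,\ell'\in\Lambdaj$ and produces
\[
K_j(\theta,\theta') = \sum_{\ell\in\Lambdaj} b^2\bra{\barg}\,\Sn(\theta)\,\cc{\Sn(\theta')},
\]
which only depends on the difference $\theta-\theta'$. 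As an immediate consequence, swapping a derivative in $\theta$ for one in $\theta'$ costs exactly a minus sign per differentiation, so $\Dm_\theta K_j(\theta,\theta') = (-1)^{\mabs}\Dm_{\theta'}K_j(\theta,\theta')$.

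Finally I would expand the two sides of \eqref{eq:sumbeta2} using the definitions of $\betac$ and $\betacm$: the right-hand side equals $\int_{\Td} f(\theta')\,\Dm_\theta K_j(\theta,\theta')\,\rho(\diff\theta')$, while the left-hand side equals $\int_{\Td}\fm(\theta')\,K_j(\theta,\theta')\,\rho(\diff\theta')$. An integration by parts in $\theta'$, identical to the one used in part (i), converts the latter into $(-1)^{\mabs}\int_{\Td} f(\theta')\,\Dm_{\theta'}K_j(\theta,\theta')\,\rho(\diff\theta')$, and the kernel identity of the previous paragraph makes the two sides coincide. I do not anticipate a substantial obstacle: the whole argument is manipulative. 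The one step worth careful writing out is the collapse of the $k$-sum in $K_j$, which rests on the cubature being exact on the polynomial subspace carrying the level-$j$ needlets; this is precisely the property built into the Littlewood--Paley nodes recalled in Section~\ref{sec:prelim}, and everything else reduces to periodic integration by parts and bookkeeping of conjugates and signs.
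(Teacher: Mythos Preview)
Your proposal is correct and follows essentially the same route as the paper: \eqref{eq:sumbeta1} by iterated integration by parts using periodicity, and \eqref{eq:sumbeta2} by collapsing the $k$-sum via cubature exactness into a diagonal Fourier-side kernel and then matching the two sides. The only cosmetic difference is that the paper works directly with the mixed kernel $\sum_k \psijk(\theta)\,\cc{\psijkm}(\theta')$ (using \eqref{eq:sumbeta1} to bring in $\psijkm$ at the outset), whereas you first form the symmetric kernel $K_j$ and then differentiate, exploiting its dependence on $\theta-\theta'$; the substance is identical.
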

	\noindent As a straightforward consequence, Equation \eqref{eq:derrec}
	becomes
	\begin{equation*}
			\fm\(\theta\) = \sumjk \betac \psijkm{\(\theta\)}, \quad \theta \in \Td,	
	\end{equation*} 	
The next Lemma collects some results which can be seen as the counterpart in our setting of  \cite[Lemma 2, Lemma 18]{bkmpAoSb}. 
	 	\begin{lemma}\label{lemma:supinf}
	 		For any $j\in \naturals$, let $\{a_k:k=1,\ldots,\Kj\}$ be a finite real-valued sequence. Hence, for any $0<p\leq \infty$, it holds
	 		\begin{equation}\label{eq:sup}
	 			\norm{\sumk a_k \psijkm}_{\Lp}\lesssim \begin{cases} B^{j\left(\mabs + d\(\frac{1}{2}-\frac{1}{p}\)\right)} \(\sumk\abs{a_k}^p\)^{\frac{1}{p}} & 0<p<\infty \\
	 			B^{j\left(\mabs + \frac{d}{2}\right)} \(\underset{k=1,\ldots,\Kj}{\sup}\abs{a_k}\) & p=\infty 
	 			\end{cases}.
	 		\end{equation} 
	 		Furthermore, there exists a subset $A_j \subset \{1,\ldots,\Kj\}$, where	 		$$ 
	 			\operatorname{card}A_j \gtrsim B^{jd},
	 		$$
	 		such that  
	 		\begin{equation}\label{eq:inf}
		 		\norm{\sum_{k \in A_j} a_k \psijkm}_{\Lp}\gtrsim \begin{cases} B^{j\left(\mabs + d\(\frac{1}{2}-\frac{1}{p}\)\right)} \(\sum_{k \in A_j}\abs{a_k}^p\)^{\frac{1}{p}} & 0<p<\infty \\
		 		B^{j\left(\mabs + \frac{d}{2}\right)} \(\underset{k \in A_j}{\sup}\abs{a_k}\) & p=\infty 
		 		\end{cases}.
	 		\end{equation} 
	 	\end{lemma}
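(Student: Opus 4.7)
The proof splits into two parts: the upper bound \eqref{eq:sup} and the lower bound \eqref{eq:inf}, with the latter being considerably more delicate.

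\textbf{Upper bound.} The main tools are the pointwise localization of Lemma \ref{lemma:localization} and the $L^p$-norm bounds of Corollary \ref{cor:dernorm}. For $p = \infty$, a direct application of Lemma \ref{lemma:localization} with $M > d$ reduces the statement to bounding $\sumk (1 + B^j d(\theta, \cubep))^{-M}$ by an absolute constant, which follows from an annular decomposition using the quasi-uniform distribution (density of order $B^{jd}$) of the cubature points on $\Td$. For $1 \leq p < \infty$, I would split $|\psijkm|$ as $|\psijkm|^{1/p} \cdot |\psijkm|^{1-1/p}$ and apply Hölder's inequality with exponents $p$ and $p/(p-1)$, arriving at
\begin{equation*}
\left|\sumk a_k \psijkm(\theta)\right|^p \leq \left(\sumk |a_k|^p |\psijkm(\theta)|\right) \left(\sumk |\psijkm(\theta)|\right)^{p-1}.
\end{equation*}
The second factor is bounded by $C B^{(p-1)j(\mabs + d/2)}$ as in the $p = \infty$ case, and integrating the first factor against Lebesgue measure invokes Corollary \ref{cor:dernorm} with exponent $1$; collecting powers of $B^j$ produces the exponent $\mabs + d(1/2 - 1/p)$. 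For $0 < p < 1$, the elementary subadditivity $|\sum_k x_k|^p \leq \sum_k |x_k|^p$ paired with the $L^p$-norm bound for $\psijkm$ (whose proof extends to $p<1$) closes this case at once.

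\textbf{Lower bound.} I would take $A_j \subset \{1, \ldots, \Kj\}$ to be a maximal subset whose cubature points $\cubep$ are pairwise at geodesic distance at least $c_0 B^{-j}$ on $\Td$, where $c_0 > 0$ is a universal constant to be fixed large enough; a standard packing argument then gives $\operatorname{card}(A_j) \gtrsim B^{jd}$. Let $R_k$ denote the geodesic ball of radius $c_0 B^{-j}/2$ around $\cubep$, so that the $\{R_k : k \in A_j\}$ form a pairwise disjoint family. Combining Lemma \ref{lemma:localization} (to control the tail $\int_{\Td \setminus R_k} |\psijkm|^p$) with the lower $L^p$-norm bound of Corollary \ref{cor:dernorm} (for the total mass) shows, for $c_0$ large enough, that $\int_{R_k} |\psijkm|^p \gtrsim B^{jp(\mabs + d(1/2 - 1/p))}$. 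Reusing the Hölder split of the upper bound on the cross term $\sum_{k' \in A_j \setminus \{k\}} a_{k'} \psi_{j,k'}^{(m)}$, and exploiting the pointwise decay $\sum_{k' \in A_j \setminus \{k\}} |\psi_{j,k'}^{(m)}(\theta)| \lesssim c_0^{-M} B^{j(\mabs + d/2)}$ valid on $R_k$ (which follows from Lemma \ref{lemma:localization} and the $c_0 B^{-j}$-separation of $A_j$ via the same annular estimate), an exchange of summation order yields
\begin{equation*}
\sum_{k \in A_j} \left\| \sum_{k' \in A_j \setminus \{k\}} a_{k'} \psi_{j,k'}^{(m)} \right\|_{L^p(R_k)}^p \lesssim c_0^{-M(p-1)} B^{jp(\mabs + d(1/2 - 1/p))} \sum_{k' \in A_j} |a_{k'}|^p.
\end{equation*}
Fixing $c_0$ sufficiently large so that this tail contribution is controlled by the leading one $\sum_{k \in A_j} |a_k|^p \|\psijkm\|_{L^p(R_k)}^p$, a Minkowski-type argument in $\ell^p(L^p)$ (using the disjointness of the $R_k$) combined with a convexity inequality of the form $(A - B)_+^p \geq A^p - p A^{p-1} B$ and Hölder's inequality on the sequences delivers \eqref{eq:inf}.

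\textbf{Main obstacle.} The principal difficulty is showing that, uniformly in the resolution level $j$, the leading contribution $a_k \psijkm$ on each $R_k$ genuinely dominates both the tail of $\psijkm$ outside $R_k$ and the cross-interactions from $\psi_{j,k'}^{(m)}$ with $k' \in A_j \setminus \{k\}$. The key observation is that the decay factor $c_0^{-M}$ coming from Lemma \ref{lemma:localization} is independent of $j$, so by taking both $M$ (in the localization inequality) and $c_0$ (in the separation defining $A_j$) sufficiently large, the required smallness is global in $j$, as opposed to requiring $c_0$ to scale with $j$. The borderline case $p = 1$ requires a direct $L^1$-version of the convexity step rather than the $\ell^p(L^p)$ argument sketched above, but the remaining structure of the proof is identical.
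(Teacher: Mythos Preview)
Your proposal is correct and follows the standard route in the needlet literature. The paper itself omits the proof entirely, stating only that it ``is very similar to the ones given in \cite[Lemma 2, Lemma 18]{bkmpAoSb}''; your argument---the H\"older split $|\psijkm|=|\psijkm|^{1/p}|\psijkm|^{1-1/p}$ together with the annular estimate $\sumk(1+B^jd(\theta,\cubep))^{-M}\lesssim 1$ for the upper bound, and the maximal $c_0B^{-j}$-separated subset $A_j$ with disjoint-ball localization for the lower bound---is precisely the strategy of those cited lemmas, transported from $\psijk$ to $\psijkm$ via Lemma~\ref{lemma:localization} and Corollary~\ref{cor:dernorm}. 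One minor addendum: for $0<p<1$ the lower bound follows more directly than your $p\geq 1$ sketch, since the pointwise subadditivity $|x+y|^p\leq|x|^p+|y|^p$ immediately gives $\sum_{k\in A_j}\int_{R_k}|\sum_{k'\neq k}a_{k'}\psi_{j,k'}^{(m)}|^p\leq\sum_{k'}|a_{k'}|^p\int_{\Td\setminus R_{k'}}|\psi_{j,k'}^{(m)}|^p$, and the last integral is $\lesssim c_0^{-(Mp-d)}B^{jp(\mabs+d(1/2-1/p))}$ for $M>d/p$, so no convexity or H\"older step is needed there.
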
 
		\noindent As discussed below (see Formula \eqref{eq:pirupi}), Equation \eqref{eq:sup} will be crucial to establish a suitable upper bound for the needlet expansion of toroidal density functions belonging to some Besov space. As far as a lower bound is concerned, if the toroidal needlets were orthonormal, then \eqref{eq:sup} could have been immediately reversed, such that $A_j = \{1,\ldots,\Kj\}$ in \eqref{eq:inf}. Nevertheless, needlets, as well as their derivatives, are not orthogonal, but for $j\in\Nzero$ and $k,k^\prime \in \{1,\ldots,\Kj\}$ such that $\cubep$ and $\xi_{j,k^\prime}$ are distant enough, the scalar product between $\psijk$ and $\psi_{j,k^\prime}$ (and, consequently, between $\psijkm$ and $\psi_{j,k^\prime}^{\(m\)}$) is almost negligible. This reasoning can be extended pairwise to all the needlets whose cubature points belong to $A_j$. The proof of Lemma \ref{lemma:supinf} is very similar to the ones given in \cite[Lemma 2, Lemma 18]{bkmpAoSb} and, then, here omitted for the sake of brevity. 
		
		\subsection{Besov spaces}
		Before concluding this section, we recall the construction of the Besov spaces on $\Td$ and their excellent approximation properties for needlet coefficients. Further details can be found, among others, in \cite{gm2}, and in the references therein. Following \cite{gelpes} (see also \cite{bkmpAoSb,donoho1,dmg,durastanti6}, we consider $\mathcal{P}_t$ the space of polynomial functions of degree $t$ on $\Td$. For any $f:\Td \mapsto \reals$, $f \in \Lp$, the \emph{approximation error} $G_t\left(f;p\right)$, obtained when we replace $f$ by $g \in \mathcal{P}_t$, is defined by
		$$
			G_t\left(f;p\right) = \inf_{g \in \mathcal{P}_t} \norm{f-g}_{\Lp}.
		$$   
 		The Besov space $\besov$ is the space of functions such that
 		$$
 			f \in L^p\left(\Td\right) \quad \text{and} \quad \left(\sum_{t=0}^{\infty}\left(t^s G_t\left(f;r\right)\right)^q\right)^{\frac{1}{q}}<\infty.
 		$$
		Since $t \mapsto  G_t\left(f, r\right)$ is decreasing, a standard condensation argument yields the following equivalent conditions:
		$$
			f \in L^r\left(\Td\right) \quad \text{and} \quad \left(\sum_{j=0}^{\infty}\left(B^{js}G_{B^j}\left(f;r\right)\right)^q\right)^{\frac{1}{q}}<\infty.
		$$
 		The parameters of the Besov space $\besov$ can be read as follows (see also \cite{bd2}):
		 \begin{itemize}
		 	\item $r$ concerns the summability with respect to $j$. In particular, for any j > 0, $\{\beta_{j,k}: k=1,\ldots,K_j \}$ belongs to the
		 	set of $r$-summable sequences $\ell_r(\Td)$.
		 	\item $q$ is the controlling parameter related to the weighted $q$-norm along the scale of the needlet coefficients at the resolution level $j$;
		 	\item $s$ controls the smoothness of the decay rate for the weighted $q$-norm of the needlet coefficients.
		 \end{itemize}
	 Defining $s= s^\prime+ a$, where $s^\prime \in \mathbb{N}$ and $a \in \left(0,1\right)$, and following \cite[Definition 1]{Kerkypicard92} (see also \cite{gelpes}) yields
	 $$
	 f\in \besov \iff \fm \in \besovgen{r}{q}{a}, \text{ for }\mabs \leq s. 
	 $$
 		As a straightforward consequence, we have that 
		 \begin{align}\label{eq:besovder}
 			f\in \besovgen{r}{q}{s+\mabs} \iff \fm \in \besovgen{r}{q}{s}. 
 		\end{align} 
 		Then, we can prove the next result, analogous to \cite[Theorem 4]{bkmpAoSb} but properly adapted to derivative functions defined on $\Td$.
		\begin{proposition}\label{prop:pippo}
 			Let $1 \leq r \leq \infty$, $s > 0$, $0 \leq q \leq \infty$. Let $f$ be a measurable function on $\Td$, associated to the needlet coefficients  $\{\beta_{j,k}: j\in\Nzero, k=1,\ldots,K_j \}$. Then, the following conditions are equivalent 
			 \begin{enumerate}
 				\item $f \in \besovm$; 
 				\item $\fm \in \besov$;
 				\item for every $j \geq 1$,
				$$
					\sum_{k=1}^{K_j}\beta_{j,k}^r \norm{\psi_{j,k}}_{\Lr}^r=B^{-j\left(s+\mabs\right)}\delta_j,
				$$
				where $\left(\delta_j: j\in\Nzero \right)$ is a $q$-summable sequence;
 				\item for every $j \geq 1$,
				$$
					\sum_{k=1}^{K_j}\left(\beta^{\left(m\right)}_{j,k}\right)^r \norm{\psi_{j,k}}_{\Lr}^r=B^{-js}\delta_j,
				$$
				where $\left(\delta_j: j\in\Nzero \right)$ is a $q$-summable sequence.
 				\item for every $j \geq 1$,
				$$
					\sum_{k=1}^{K_j}\beta_{j,k}^r \norm{\psi^{\left(m\right)}_{j,k}}_{\Lr}^r=B^{-js}\delta_j.
				$$
				where $\left(\delta_j: j\in\Nzero \right)$ is a $q$-summable sequence.
			\end{enumerate}
		\end{proposition}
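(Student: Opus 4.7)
My plan is to establish the five conditions as equivalent by stringing together a short list of pairwise implications. The backbone is the standard needlet characterization of Besov spaces on compact manifolds (as in \cite[Theorem 4]{bkmpAoSb} on the sphere, or its general formulation in \cite{gm2,gelpes}), combined with the identities relating $f$ and $\fm$ at the needlet level already collected in Lemma \ref{lemma:sumbeta}.

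The starting observation is that (i) $\Leftrightarrow$ (ii) is nothing but the Kerkyacharian--Picard characterization of derivatives in Besov spaces recalled in \eqref{eq:besovder}: shifting the regularity index by $\mabs$ is equivalent to applying the differential operator $\Dm$, so no further computation is needed here. The implications (i) $\Leftrightarrow$ (iii) and (ii) $\Leftrightarrow$ (iv) are then two applications of the standard needlet characterization: once to $f$ with index $s+\mabs$ and needlet coefficients $\{\beta_{j,k}\}$, which yields (iii); and once to $\fm$ with index $s$ and needlet coefficients $\{\beta^{(m)}_{j,k}\}$ (which are genuinely the needlet coefficients of $\fm$, by \eqref{eq:dercoeff}), which yields (iv).

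It remains to bring (v) into the cycle. For this I would use the scaling relation $\norm{\psi^{(m)}_{j,k}}_{\Lr} \approx B^{j\mabs}\norm{\psi_{j,k}}_{\Lr}$ from Corollary \ref{cor:dernorm} to convert between the right-hand sides of (iii) and (v): after raising to the $r$-th power and multiplying by $\beta_{j,k}^{r}$, the factor $B^{jr\mabs}$ can be absorbed into the weight $B^{-j(s+\mabs)}$ on the right-hand side of (iii), matching the weight $B^{-js}$ in (v) up to a rescaling of the $q$-summable sequence $(\delta_j)$. Alternatively, the bridge (iv) $\Leftrightarrow$ (v) can be closed directly via the identity \eqref{eq:sumbeta2}, $\sum_k \beta^{(m)}_{j,k}\psi_{j,k} = \sum_k \beta_{j,k}\psi^{(m)}_{j,k}$, by taking $\Lr$-norms of both sides and appealing to the two-sided frame bounds supplied by Lemma \ref{lemma:supinf} together with \eqref{eq:neednorm} on the needlet side.

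The main obstacle I foresee is the careful bookkeeping of the powers of $B$ and the consequent reorganization of the $q$-summable sequences $(\delta_j)$ across the five conditions: the factor $B^{j\mabs}$ produced by the differential operator must be absorbed consistently in each weight. A delicate point is that the lower bound \eqref{eq:inf} holds only on the sub-collection $A_j$ of cubature points, so some care is needed to reverse the upper-bound inequalities uniformly in $j$, especially in the limiting case $p=\infty$. Once these scaling identifications are in place, the remaining content is essentially algebraic manipulation and does not involve any new analytic input beyond the needlet characterization itself.
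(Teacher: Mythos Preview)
Your approach is correct and matches the paper's own (omitted) argument: the paper states that the proof follows directly from \eqref{eq:besovder}, \eqref{eq:sumbeta2} in Lemma~\ref{lemma:sumbeta}, \eqref{eq:sup}, and the proof of \cite[Theorem~4]{bkmpAoSb}, which are precisely the ingredients you assemble. Your first route to (v) via Corollary~\ref{cor:dernorm} is a minor but equivalent variant of the paper's stated use of \eqref{eq:sup} together with \eqref{eq:sumbeta2}.
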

		\noindent The proof of this Proposition follows directly \eqref{eq:besovder}, \eqref{eq:sumbeta2} in Lemma \ref{lemma:sumbeta}, as well as \eqref{eq:sup} and the proof of \cite[Theorem 4]{bkmpAoSb}, so it is here omitted for the sake of brevity.\\
		Using jointly \eqref{eq:sumbeta2} in Lemma \ref{lemma:sumbeta}, \eqref{eq:sup} in Lemma \ref{lemma:supinf} and Proposition \ref{prop:pippo} leads to the following inequality, for any $j \in \naturals$, 
		\begin{align}\label{eq:pirupi}
			\norm{\sumk \betacm \psijk}_{\Lp} \lesssim B^{j\(\mabs+d(\frac{1}{2}-\frac{1}{p})\)} \(\sumk\abs{\betac}^p\)^{\frac{1}{p}}. 
		\end{align}
		Following \cite{bkmpAoSb}, the Besov space $\besov$ can be read as a Banach space with norm 
		\begin{equation*}
			\norm{f}_{\besov} =\norm{\left(B^{j\left[s+d\left(\frac{1}{2}-\frac{1}{r}\right)\right]} \norm{\left(\betac\right)_{k=1,\ldots,\Kj}}_{\ell^r}\right)_{j\in\Nzero}}_{\ell^q}<\infty,
		\end{equation*}
		so that we can define the \emph{Besov ball} of radius $L>0$ as the following set:
		\begin{equation*}\label{eq:besovball}
			\besov\left(L\right)=\{f \in \besov: \norm{f}_{\besov}\leq L\}.
		\end{equation*}
		Finally, as stated in \cite[Theorem 5]{bkmpAoSb} (see also \cite{Kerkypicard93,Kerkypicard}), the following \emph{Besov embeddings hold} for $p<r$
		\begin{align*}
			&\besovgen{r}{q}{s}	\subset		\besovgen{p}{q}{s}\\
			&\besovgen{p}{q}{s}	\subset		\besovgen{r}{q}{s-d\left(\frac{1}{p}-\frac{1}{r}\right)},			
		\end{align*}
		which can be equivalently stated as
		\begin{align*}
			&\sumk \abs{\betac}^{r} \leq \sumk \abs{\betac}^p\\
			&\sumk \abs{\betac}^{p} \leq \left(\sumk \abs{\betac}^r\right) \Kj^{1-\frac{p}{r}}.
		\end{align*}
		A detailed proof is similar to the ones in \cite[Theorem 5]{bkmpAoSb} and \cite[Equation 8]{dmg}.

\section{Local thresholding via toroidal needlets}\label{sec:main}
	\subsection{The construction of the estimators}
	Let $X_1,\ldots,X_{\N}$ be independent and identically distributed random vectors on $\Td$ with unknown density $f:\Td \mapsto \left[\left. 0,\infty\right.\right)$, $f \in C^m\(\Td\)$, whose upper bound is given by 	
	$$
	M=\sup_{\theta \in \Td} \abs{f\(\theta\)}.$$ 
	We assume that $f$ has derivatives of order $\mu \in \naturals$. Our goal is to produce an estimator $\fmest$ for $\fm$, for $m\in \mathbb{N}^d$ such that $\mabs\leq \mu$. The first step will consist in defining empirical estimators for the needlet coefficients; the second step will be focused on the thresholding procedure which yields the construction of the target estimator.  	Analogously to \cite{prakasa1} (cf. also \cite{prakasa0,prakasaISE}), for any $j \geq 0$ and $k \in  \{1,\ldots , \Kj\}$, we can define an \emph{empirical estimator} for the $m$-th derivative needlet coefficients. 
		\begin{equation}\label{eq:betamest}
			\betamest = \frac{\(-1\)^{\mabs}}{\N} \sum_{i=1}^{\N} \cc{\psijkm}\(X_i\) .
		\end{equation}
	We exploit the empirical needlet coefficients \eqref{eq:betamest} to define nonlinear estimators for derivatives of a density function as follows. First, we apply a selection procedure to each coefficient, and then only the selected ones will be used to construct the estimator of $\fm$ given by the following formula:
		\begin{equation}\label{eq:esti}
			\fest\left(\theta\right) = \sumjtrunc\sumk \eta\left(\betamest, \thres\right) \psijk\left(\theta\right),
		\end{equation}	
		where $\Jn$ and $\thres$ are the so-called \emph{truncation bandwidth} and \emph{threshold} respectively, while $\eta:\reals\times\reals \sto \reals$ is the \emph{thresholding function}.\\ 
		The truncation bandwidth corresponds to the higher resolution level on which the empirical coefficients are computed. The optimal choice for the truncation bandwidth is defined as follows:
		\begin{equation}\label{eq:trunc}
		\Jn=\Bigl\lfloor\frac{1}{d+2\mabs}\log_B \frac{\N}{\log \N}\Bigr\rfloor.
		\end{equation}
		Loosely speaking, \eqref{eq:trunc} allows us to control, in an optimal way, the error due to the approximation of the infinite sum $\fm$ by the finite sum $\fmest$. Further details concerning choice of $\Jn$ will be provided by Remark \ref{rem:optimal} below. \\
		The thresholding function
		is aimed to control the selection of the empirical needlet coefficients (see, for example, \cite{donoho1}). In the framework of \emph{local thresholding}, where each empirical coefficient is examined separately, it is standard to choose between \emph{hard} and \emph{soft} thresholding (see, for example, \cite[Chapters 10 and 11]{WASA}). Hard thresholding either keeps or discards the empirical coefficients. The soft thresholding, also known as \emph{wavelet shrinkage}, ``shrinks'' towards zero the values for the empirical coefficients. The two thresholding functions $\eta_{\text{hard}}:\reals\times\reals \sto \reals$ and $\eta_{\text{soft}}:\reals\times\reals \sto \reals$ are given respectively by
		\begin{align}\label{eq:thres}
			\eta_{\text{hard}}\left(u,a\right)&=\begin{cases}
			u & \text{if }\abs{u}\geq a\\
			0 & \text{otherwise}
			\end{cases},\\
		\label{eq:thressoft}
			\eta_{\text{soft}}\left(u,a\right)&=\max\left(\abs{u}-a,0\right)\operatorname{sign}\(u\),
		\end{align}  
	see Figure \ref{fig:1}.
		\begin{figure}\centering
			\includegraphics[width=0.6\textwidth]{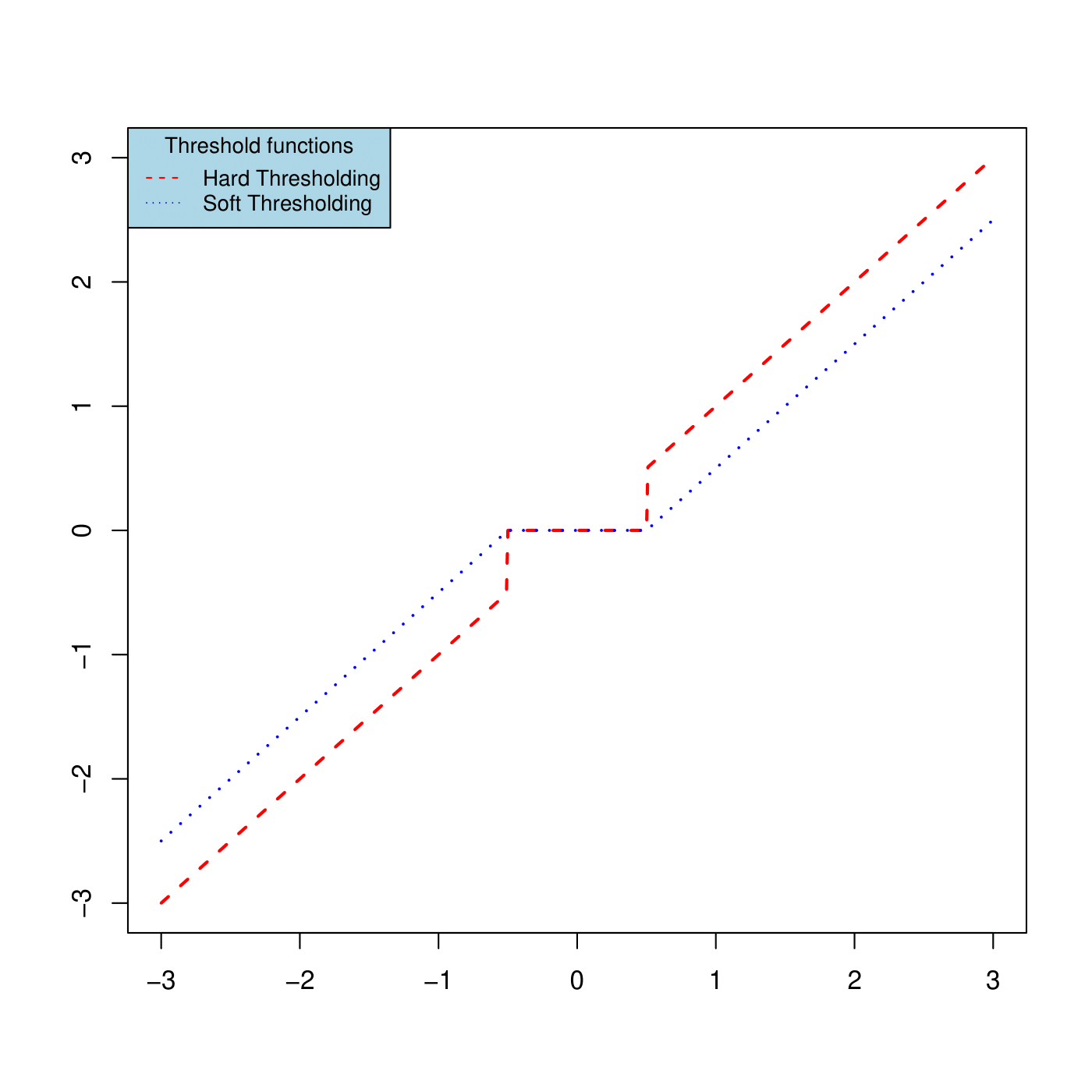}
			\caption{Comparison between hard (red line) and soft (blue line) threshold functions ($a=0.5$).}
			\label{fig:1}
		\end{figure}
		The \emph{hard} and the \emph{soft thresholding needlet estimators for the $m$-th derivative of the density $f$}, at every $\theta \in \Td$, are then respectively defined by
		\begin{equation*}\label{eq:estihard}
			\fest_{\text{hard}}\left(\theta\right) = \sumjtrunc \sumk \eta_{\text{hard}}\left(\betamest, \thres\right) \psijk\left(\theta\right),
		\end{equation*}
		and
		\begin{equation*}\label{eq:estisoft}
			\fest_{\text{soft}}\left(\theta\right) = \sumjtrunc \sumk\eta_{\text{soft}}\left(\betamest, \thres\right) \psijk\left(\theta\right),
		\end{equation*}
		where the tuning parameters are described as follows. 
			The \emph{threshold} $\thres$ is the product of three objects:
			\begin{equation}\label{eq:thresini}
				\thres=\kappa B^{j\mabs}\sqrt{\frac{\log n}{n}},
			\end{equation}
			where 
			\begin{itemize}
				\item the \emph{threshold constant} $\kappa$, which depends on the Besov parameters. Our theoretical setting will not answer the question how to choose $\kappa$ (we will show only that $\kappa$ should be large enough). However, in Section \ref{sec:num} we will provide some examples; 
				\item the \emph{derivative balancing parameter} $B^{j\mabs}$, which depends on the order of the differential operator $D^m$; 
				\item the \emph{sample size--dependent scaling factor} $\sqrt{\frac{\log n}{n}}$.
		\end{itemize}
	\begin{remark}
		Another common choice in the literature concerning the sample size--depending scaling factor is $\sqrt{j/\N}$, rather than $\sqrt{\log \N / \N}$ (see, among others, \cite{donoho1,Kerkypicard92,trib95}). As shown for example by \cite[Proof of Proposition 10.3]{WASA}, the two factors are equivalent, and, hence, another possible choice for the threshold is 
		$$
		\thres^\prime = \kappa^\prime \sqrt{j}B^{j\mabs}\N^{-\frac{1}{2}}.
		$$
	\end{remark}

\subsection{Some probabilistic results on the empirical needlet coefficients} 	First, observe that the estimator \eqref{eq:betamest} is \emph{unbiased}, 
\begin{align*}
	\Ex\[\betamest\]& =  \frac{\(-1\)^{\mabs}}{\N} \sum_{i=1}^{\N} \Ex\left[\cc{\psijkm}\(X_i\)\right]= \betacm.
\end{align*}
Using Corollary \ref{cor:dernorm} with $p=2$ yields the following upper bound for its variance:
\begin{equation}\label{eq:var}
	\Var\(\betamest\) \leq M\int_{\Td} \abs{\psijkm \(\theta\)}^2 \rho\(\diff \theta\) \leq C^{\ast}_2 M B^{2j\mabs}.
\end{equation}
Additional probabilistic bounds on the empirical coefficients are given in the following Lemma \ref{lemma:probbounds}, which can be read as the counterpart in our setting of \cite[Lemma 16]{bkmpAoSb}.
\begin{lemma}\label{lemma:probbounds}
	Let $\betamest$ be given by \eqref{eq:betamest}. Hence, for any $j \geq 0$ such that $B^{j\left(d+2\mabs\right)}\leq\sqrt{n}$ and for $k=1,\ldots,\Kj$, it holds that
	\begin{align}
		& \Pr\left(\abs{\betamest-\betacm}\geq x \right)\leq 2\exp\left(-\frac{\N x^2}{2B^{j\mabs}\left(C^{\ast}_2 M +\frac{1}{3}C^{\ast}_{\infty} \sqrt{\N}x  \right)}\right)\quad \text{ for }x>0;\label{eq:bernstein}\\
		& \Ex\left[\abs{\betamest-\betacm}^\eta\right]\lesssim n^{-\frac{\eta}{2}}B^{j\mabs\frac{\eta}{2}}\quad \text{ for }\eta\geq 1;\label{eq:momenteta}\\
		& \Ex\left[\sup_{k=1,\ldots,\Kj}\abs{\betamest-\betacm}^\eta\right]\lesssim \left(j+1\right)^{\eta} n^{-\frac{\eta}{2}} B^{j\mabs\frac{\eta}{2}}\quad \text{ for }\eta\geq 1.\label{eq:momentinf}
	\end{align}
\end{lemma}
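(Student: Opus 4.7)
The plan is to treat the three bounds in sequence. For part (i), I write $\widehat{\beta}_{j,k}^{(m)}-\beta_{j,k}^{(m)}=\frac{1}{n}\sum_{i=1}^{n}Z_i$ where $Z_i=(-1)^{|m|}\overline{\psi_{j,k}^{(m)}}(X_i)-\beta_{j,k}^{(m)}$ are i.i.d. and centered. Lemma \ref{lemma:localization} (evaluated at $\theta=\xi_{j,k}$, where the bound is attained up to a constant) gives $\|\psi_{j,k}^{(m)}\|_{L^\infty(\Td)}\leq C^{\ast}_{\infty}\,B^{j(|m|+d/2)}$, and hence a pointwise envelope for $Z_i$. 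Corollary \ref{cor:dernorm} with $p=2$ together with \eqref{eq:supnorm} yields the variance bound $\operatorname{Var}(Z_1)\leq M\,\|\psi_{j,k}^{(m)}\|_{L^2(\Td)}^{2}\leq C^{\ast}_{2}\,M\,B^{2j|m|}$. I then apply the classical Bernstein inequality for bounded i.i.d. variables; the hypothesis $B^{j(d+2|m|)}\leq\sqrt{n}$ is used precisely to reorganize $B^{j(|m|+d/2)}$ in the range factor into $B^{j|m|}\sqrt{n}$, which produces the exponent displayed in \eqref{eq:bernstein}.

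For part (ii), I use the layer-cake identity
\begin{equation*}
\Ex\!\left[\,|\widehat{\beta}_{j,k}^{(m)}-\beta_{j,k}^{(m)}|^{\eta}\,\right]
=\eta\int_{0}^{\infty}x^{\eta-1}\,\Pr\!\left(|\widehat{\beta}_{j,k}^{(m)}-\beta_{j,k}^{(m)}|\geq x\right)\diff x
\end{equation*}
and split the integral at the breakpoint $x_\star$ where the two terms in the denominator of \eqref{eq:bernstein} balance. On the sub-Gaussian interval $[0,x_\star]$, Bernstein reduces to a Gaussian tail with variance proxy of order $B^{2j|m|}/n$, yielding a contribution of order $n^{-\eta/2}B^{j|m|\eta/2}$ by a standard Gaussian moment computation. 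On $[x_\star,\infty)$, Bernstein reduces to a pure exponential tail and an elementary change of variables gives a contribution of the same order up to constants. Summing the two pieces yields \eqref{eq:momenteta}.

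For part (iii), I combine part (i) with a union bound over the $K_j\lesssim B^{jd}$ cubature points at level $j$:
\begin{equation*}
\Pr\!\left(\sup_{1\leq k\leq K_j}\,|\widehat{\beta}_{j,k}^{(m)}-\beta_{j,k}^{(m)}|\geq x\right)
\leq 2 K_j\exp\!\left(-\frac{n x^{2}}{2B^{j|m|}(C^{\ast}_{2}M+\tfrac{1}{3}C^{\ast}_{\infty}\sqrt{n}\,x)}\right).
\end{equation*}
Again, I apply the layer-cake formula. The crossover point now moves to $x_\star\asymp\sqrt{(j+1)B^{j|m|}/n}$, because one must absorb the additional $\log K_j\asymp j\log B$ coming from the prefactor $K_j$ before the exponential becomes summable. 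Integrating the sub-Gaussian and exponential regimes separately, this shift produces exactly the extra $(j+1)^{\eta}$ factor appearing in \eqref{eq:momentinf}.

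The main obstacle is bookkeeping in parts (ii) and (iii): one must verify that the breakpoint $x_\star$ indeed lies in the admissible region determined by the restriction $B^{j(d+2|m|)}\leq\sqrt{n}$, and that the $(j+1)^\eta$ factor arises cleanly from the logarithmic shift of $x_\star$ under the union bound, without polluting the rate by additional powers of $B^{jd}$. The argument is otherwise parallel to \cite[Lemma 16]{bkmpAoSb}, with the extra factor $B^{j|m|}$ tracking the $m$-th order differentiation throughout, so the proof can be kept short by referencing the standard Bernstein and layer-cake computations rather than grinding through the integrals.
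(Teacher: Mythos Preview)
Your proposal is correct and follows essentially the same route as the paper: Bernstein's inequality with the variance bound from Corollary~\ref{cor:dernorm} and the sup-norm bound from Lemma~\ref{lemma:localization} (reorganized via $B^{jd}\leq n$), then the layer-cake identity, and for \eqref{eq:momentinf} a union bound over the $K_j\lesssim B^{jd}$ cubature points with the integration range split so that the prefactor is absorbed into the exponential on the tail. The only cosmetic difference is that the paper bounds the Bernstein tail by a \emph{sum} of a Gaussian and a pure exponential term and integrates each over all of $[0,\infty)$, rather than splitting the range at a single breakpoint; and in part~(iii) the paper takes the cutoff at $a\,j\,B^{j|m|/2}/\sqrt{n}$ (linear in $j$), so the trivial integral over $[0,x_\star]$ already yields the full factor $(j+1)^\eta$, whereas your $x_\star\asymp\sqrt{(j+1)B^{j|m|}/n}$ would give $(j+1)^{\eta/2}$ from that piece --- either adjust the cutoff or note that your bound is actually sharper.
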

\noindent The proof of Lemma \ref{lemma:probbounds} can be found in Section \ref{sec:needproof}.\\
	Finally, using Equation \eqref{eq:bernstein} in Lemma \ref{lemma:probbounds} leads to the following result. 
\begin{lemma}\label{lemma:sussig}
	Let $\thres$ be given by \eqref{eq:thresini}. For $\kappa \geq 6\frac{C_2^\ast M}{C_\infty ^\ast}$, under the hypotheses given in Lemma \ref{lemma:probbounds}, there exists $		\gamma \leq \left(\frac{3}{4C_\infty^\ast}\right)\kappa, 
	$ such that
	\begin{equation}\label{eq:sussig}
		\Pr\left( \abs{\betamest - \betacm}\geq \frac{\thres}{2} \right) \lesssim \N^{-\gamma}. 
	\end{equation}
\end{lemma}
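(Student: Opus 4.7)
The plan is to apply the Bernstein-type tail bound \eqref{eq:bernstein} from Lemma \ref{lemma:probbounds} with $x = \thres/2 = (\kappa/2) B^{j\mabs}\sqrt{\log n/n}$ and then verify that the truncation restriction $B^{j(d+2\mabs)}\leq \sqrt n$ forces the exponent to scale at least linearly in $\log n$.

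First I substitute $x=\thres/2$ directly into \eqref{eq:bernstein}. The numerator becomes $n x^2 = (\kappa^2/4) B^{2j\mabs}\log n$, while $\sqrt n\,x = (\kappa/2)B^{j\mabs}\sqrt{\log n}$, so that the denominator reads
\[
2B^{j\mabs}\Bigl(C_2^*M \;+\; \tfrac{\kappa C_\infty^*}{6} B^{j\mabs}\sqrt{\log n}\Bigr).
\]
Hence the Bernstein bound evaluated at $x=\thres/2$ rewrites as
\[
\Pr\!\bigl(|\betamest-\betacm|\geq \thres/2\bigr)\;\leq\; 2\exp\!\Bigl(-\frac{\kappa^2 B^{j\mabs}\log n}{8C_2^* M + (4\kappa C_\infty^*/3)B^{j\mabs}\sqrt{\log n}}\Bigr).
\]

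Next I separate the two contributions in the denominator via the elementary inequality $u/(v+w)\geq \min\{u/(2v),\,u/(2w)\}$, which decomposes the analysis into a sub-Gaussian regime (the variance-like term $2B^{j\mabs}C_2^*M$ dominates) and a sub-exponential regime (the linear-in-$x$ term dominates). The sub-Gaussian branch gives an exponent bounded by $\kappa^2 B^{j\mabs}\log n / (16 C_2^*M)\geq \kappa^2\log n/(16 C_2^*M)$, using $B^{j\mabs}\geq 1$. The key observation is then that the truncation condition $B^{j(d+2\mabs)} \leq \sqrt n$, together with the prescribed lower bound $\kappa \geq 6 C_2^* M/C_\infty^*$, is precisely calibrated so that the sub-Gaussian branch is the operative one for all $j$ under consideration; thus the effective exponent retains a full $\log n$ scaling rather than degenerating to $\sqrt{\log n}$.

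Finally, setting $\gamma := \kappa^2/(16 C_2^*M)$ and invoking $\kappa\geq 6 C_2^* M/C_\infty^*$ gives
\[
\gamma \;=\; \frac{\kappa^2}{16 C_2^*M}\;\geq\; \frac{6\kappa}{16 C_\infty^*}\;=\; \frac{3\kappa}{8 C_\infty^*}\;\leq\;\frac{3\kappa}{4 C_\infty^*},
\]
so that \eqref{eq:sussig} holds with $\gamma$ of the claimed form (and one may freely lower $\gamma$ if the above inequality is not tight). The main obstacle is the split into sub-Gaussian versus sub-exponential regimes: one must argue carefully that under $B^{j(d+2\mabs)}\leq\sqrt n$ and $\kappa \geq 6 C_2^*M/C_\infty^*$ the variance term in the Bernstein denominator controls the tail—this is exactly why the threshold $\thres$ is designed with the factor $B^{j\mabs}\sqrt{\log n/n}$, and once this dominance is established, the rest is bookkeeping of constants.
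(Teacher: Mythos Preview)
There is a genuine gap in your splitting step. The inequality $u/(v+w)\ge\min\{u/(2v),u/(2w)\}$ lower-bounds the Bernstein exponent by a \emph{minimum}, so the resulting probability bound is governed by the \emph{worse} of the two branches; you therefore need both branches to scale like $\log n$, not just the sub-Gaussian one. But your sub-exponential branch gives
\[
\frac{\kappa^2 B^{j\mabs}\log n}{(8\kappa C_\infty^\ast/3)\,B^{j\mabs}\sqrt{\log n}}=\frac{3\kappa}{8C_\infty^\ast}\sqrt{\log n},
\]
which yields only $\exp(-c\sqrt{\log n})$, not a power of $n$. Your appeal to $B^{j(d+2\mabs)}\le\sqrt n$ cannot rescue this display: that hypothesis was already consumed (in the weak form $B^{jd}\le n$) in deriving \eqref{eq:bernstein}, and once the factor $\sqrt n$ sits in the denominator no further use of the truncation condition is available from the expression as written.

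The repair is to keep the sharp bound $\norm{\psijkm}_{\Linf}\le C_\infty^\ast B^{j(\mabs+d/2)}$ in the Bernstein denominator rather than the already-loosened $C_\infty^\ast B^{j\mabs}\sqrt n$. The sup-norm contribution is then $\tfrac{\kappa C_\infty^\ast}{3}B^{j(2\mabs+d/2)}\sqrt{\log n/n}$, and the \emph{full} hypothesis $B^{j(d+2\mabs)}\le\sqrt n$ gives $B^{jd/2}\le n^{1/4}B^{-j\mabs}$, so this term is at most $\tfrac{\kappa C_\infty^\ast}{6MC_2^\ast}\,n^{-1/4}\sqrt{\log n}$ times the variance term $2MC_2^\ast B^{2j\mabs}$ and hence negligible. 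The entire denominator is thus $\le 4MC_2^\ast B^{2j\mabs}$ for $n$ large, giving exponent $\ge(\kappa^2/(16MC_2^\ast))\log n$ and the claimed $n^{-\gamma}$. The paper's own proof works in this spirit---bounding the whole denominator at once via $C_2^\ast M\le\kappa C_\infty^\ast/6$ rather than splitting---though its displayed algebra is itself imprecise.
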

\subsection{The main results}
		We will show that $\fest$ achieves the optimal rates of convergence up to some logarithmic factors with respect to $\Lp$-loss functions in both the hard and soft thresholding frameworks. In other words, our goal is to evaluate the global error measure for the estimator $\fmest$, by studying the worst possible performance of the $L^p$-risk $\Ex \[\norm{\fmest-\fm}_{\Lp}\]$ over a given nonparametric regularity class $\{\besovm:1<r<\infty,1\leq q\leq \infty,s>0 \}$ of function spaces, that is, the minimax rate of convergence
		$$
		\mathcal{R}_{p,\N}\(\besov\(R\)\)= \inf_{\fmest}\sup_{f\in \besovm\(R\)} \Ex \[\norm{\fmest-\fm}_{\Lp}\],
		$$
		where the infimum is computed over all the possible estimators, $1\leq p\leq \infty$ and $0<R<\infty$ is the radius of the Besov ball on which $f$ is defined. For $r>0$, we will show that $\fmest$ is adaptive for the $L^p$-risk and for the scale of classes of Besov balls $\{\besovm\(R\):1<r<\infty,1\leq q\leq \infty,s>0,0<R<\infty \}$, that is, for every choice of the parameters $r,s,q$ and the radius $R$, there exists a constant $c_{r,s,q,R} > 0$,
		such that
		$$
		\Ex \[\norm{\fmest-\fm}_{\Lp}\]\leq c_{r,s,q,R}\mathcal{R}_{p,\N}\(\besov\(R\)\),
		$$
		see, e.g., \cite[Definition 11.1]{WASA}. Furthermore, we will prove that $\fmest$ attains the optimal rate of convergence, that is, 
		$$
		\sup_{f\in \besovm\(R\)} \Ex \[\norm{\fmest-\fm}_{\Lp}\]   \approx \mathcal{R}_{p,\N}\(\besov\(R\)\),
		$$
		see \cite[Definition 10.1]{WASA}. Our achievements are described by two theorems. The first is concerned with the upper bound for the $\Lp$-risk. 
		\begin{theorem}[Upper bound]\label{thm:maintheorem}
			Let $f \in \besovm\left(R\right)$, where $s-\frac{d}{r}>0$, let $\fm = D^mf $, and let $\fest$ be defined by \eqref{eq:esti}, with $\eta$ given by \eqref{eq:thres}-\eqref{eq:thressoft}. Then, for any $1\leq p<\infty$, there exists a threshold constant $\kappa>0$ such that it holds 
			\begin{equation*}\label{eq:maintheorem}
				\sup_{f \in \besov\left(R\right)} \Ex \left[\norm{\fest-\fm}_{\Lp}^p\right] \lesssim  \left(\log n\right)^p \left[\frac{n}{\log n}\right]^{-\alpha\left(s,\mabs,p,r\right)},
			\end{equation*}
			where
			\begin{equation}\label{eq:rate}
				\alpha\left(s,\mabs,p,r\right)= \begin{cases}
				\frac{ps}{2\left(s+\mabs\right)+d} & \text{for }r\geq \frac{\left(2\mabs+d\right)p}{2\left(s+\mabs\right)+d}  \quad\text{(regular zone)}\\
				\frac{p\left(s+d\left(\frac{1}{p}-\frac{1}{r}  \right)\right)}{2\left[\left(s+\mabs\right)+d\left(\frac{1}{2}-\frac{1}{r} \right) \right]} & \text{for }r< \frac{\left(2\mabs+d\right)p}{2\left(s+\mabs\right)+d}  \quad\text{(sparse zone)}
				\end{cases}.
			\end{equation}
			Moreover, for $p=\infty$, 	
			\begin{equation*}\label{eq:maintheorem2}
				\sup_{f \in \besov\left(R\right)} E\left[\norm{\fest-\fm}_{L^\infty \left(\Td\right)} \right]\lesssim  \left[\frac{n}{\log n}\right]^{-\alpha\left(s,\mabs,\infty,r\right)},
			\end{equation*}
			where
			\begin{equation*}\label{eq:rate2}
				\alpha\left(s,\mabs,\infty,r\right)=\frac{s-\frac{d}{r}  }{2\left[\left(s+\mabs\right)-d\left(\frac{1}{r}-\frac{1}{2} \right) \right]}.
			\end{equation*}
		\end{theorem}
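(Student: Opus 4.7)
The plan is to split the error according to a standard wavelet-thresholding decomposition,
\begin{align*}
\fest(\theta) - \fm(\theta) &= \sumjtrunc\sumk \bigl[\eta\bigl(\betamest,\thres\bigr) - \betacm\bigr]\psijk(\theta) \\
&\quad - \sum_{j\geq \Jn}\sumk \betacm\psijk(\theta),
\end{align*}
and to bound the $\Lp$-norm of each piece separately via Minkowski. The second (deterministic, \emph{bias}) piece is controlled using the Besov characterisation in Proposition \ref{prop:pippo} together with the quasi-orthogonality bound \eqref{eq:pirupi}: summing over $j\geq \Jn$ yields a geometric tail of order $B^{-\Jn s}$ in the regular zone, and of order $B^{-\Jn(s-d(1/r-1/p))}$ in the sparse zone after first invoking the Besov embedding $\besovgen{r}{q}{s}\hookrightarrow \besovgen{p}{q}{s-d(1/r-1/p)}$. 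The hypothesis $s-d/r>0$ ensures $\fm$ is bounded. Inserting the choice of $\Jn$ from \eqref{eq:trunc} reproduces the exponents in \eqref{eq:rate}.

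For the stochastic (thresholding) piece, for each $(j,k)$ I apply the classical four-set decomposition based on whether $|\betamest|$ and $|\betacm|$ exceed $\thres$ and $\thres/2$ (resp.\ $2\thres$):
\begin{align*}
&\text{``big--big'':}\ |\betamest|\geq\thres,\ |\betacm|>\thres/2;\quad \text{``big--small'':}\ |\betamest|\geq\thres,\ |\betacm|\leq\thres/2;\\
&\text{``small--big'':}\ |\betamest|<\thres,\ |\betacm|>2\thres;\quad \text{``small--small'':}\ |\betamest|<\thres,\ |\betacm|\leq 2\thres.
\end{align*}
For both hard and soft $\eta$ one has the pointwise bound $|\eta(\betamest,\thres)-\betacm|\leq |\betamest-\betacm|+\thres\,\mathbbm{1}_{|\betamest|\geq\thres}+|\betacm|\,\mathbbm{1}_{|\betamest|<\thres}$, which reduces the ``big--big'' contribution to moment control via \eqref{eq:momenteta} (and \eqref{eq:momentinf} for $p=\infty$). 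The ``big--small'' and ``small--big'' cross terms are controlled by combining Cauchy--Schwarz with the large-deviation estimate in Lemma \ref{lemma:sussig}: choosing $\kappa$ as in that lemma so that the exponent $\gamma$ absorbs all polynomial-in-$B^j$ factors coming from $\norm{\psijk}_{\Lp}^p$ (whose growth is given by Corollary \ref{cor:dernorm}) renders these two contributions negligible.

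The crucial term is the ``small--small'' piece, where the regime split originates. Splitting the outer sum $\sum_{j<\Jn}$ at a pivot level $j_1$ chosen so that $B^{-j_1 s}$ matches the normalised threshold, the sub-levels $j<j_1$ are handled by the direct Besov bound of Proposition \ref{prop:pippo}, while for $j_1\leq j<\Jn$ one applies \eqref{eq:pirupi} together with the Besov embedding $\besovgen{r}{q}{s}\subset\besovgen{p}{q}{s-d(1/r-1/p)}$ when $r<p$. Balancing these two ranges at $j_1$ produces the regular-zone rate when $r\geq (2\mabs+d)p/(2(s+\mabs)+d)$ and the sparse-zone rate otherwise. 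The $L^\infty$ case is analogous, using Lemma \ref{lemma:supinf} for the deterministic bounds and \eqref{eq:momentinf} for the moments, and it automatically produces the $s-d/r$ numerator and the $2[(s+\mabs)-d(1/r-1/2)]$ denominator of \eqref{eq:rate2}.

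The main obstacle will be calibrating the interaction between the derivative-induced factor $B^{j\mabs}$ --- present simultaneously in the variance bound \eqref{eq:var}, in the moment bounds of Lemma \ref{lemma:probbounds}, and in the threshold \eqref{eq:thresini} --- and the Besov exponents. Because $B^{j\mabs}$ rescales both the variance rate and the threshold level, the critical resolution shifts to $B^{\Jn}\approx (n/\log n)^{1/(d+2\mabs)}$, and the boundary between the regular and the sparse zones acquires the $\mabs$-dependent form $r_{*}=(2\mabs+d)p/(2(s+\mabs)+d)$. Verifying that, after this rescaling, the ``big--big'' variance contribution and the ``small--small'' bias contribution balance at exactly $\Jn$, and that the Besov embedding is used with the correct exponent, is the step that dictates the precise form of $\alpha(s,\mabs,p,r)$ in \eqref{eq:rate}.
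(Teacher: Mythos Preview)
Your overall architecture matches the paper's proof: the bias/stochastic split, the four-region decomposition (the paper's $Aa,Au,Ua,Uu$), the treatment of the two cross terms via Cauchy--Schwarz together with Lemma~\ref{lemma:sussig}, and a pivot split at the optimal bandwidth $\Js$ of \eqref{eq:optimalselection}. Two points in your outline need correction before the argument goes through.

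First, the pivot split at $j_1=\Js$ is required for the ``big--big'' term as well, not only for ``small--small''. Pure moment control via \eqref{eq:momenteta} gives a contribution of order $n^{-p/2}B^{jp\mabs}\norm{\psijk}_{\Lp}^p$ per coefficient; summing this over all $(j,k)$ with $j<\Jn$ does not converge. In the paper, for $j\geq\Js$ one exploits the indicator $\{\abs{\betacm}\geq\thres/2\}$ to insert a factor $\abs{\betacm}^p/\thres^p$ (regular zone) or $\abs{\betacm}^r/\thres^r$ (sparse zone) and thereby reduce to a Besov tail, while for $j<\Js$ one sums the moment bound directly. Both halves are needed.

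Second, your description of ``small--small'' has the roles of the two ranges reversed. For $j<\Js$ one uses the constraint $\abs{\betacm}<2\thres$ to replace $\abs{\betacm}^p$ by $(2\thres)^p$ (or $\abs{\betacm}^{p-r}$ by $(2\thres)^{p-r}$ in the sparse zone); it is for $j\geq\Js$ that the Besov decay of Proposition~\ref{prop:pippo} is invoked, directly in the regular zone and through the embedding $\besov\subset\besovgen{g}{q}{s-d(1/r-1/g)}$ with the auxiliary index $g=\bigl(p\mabs+d(p/2-1)\bigr)/\bigl(s+\mabs-d(1/r-1/2)\bigr)$ in the sparse zone. Applying the Besov bound for $j<\Js$ would only yield $\sum_{j<\Js}B^{-jsp}=O(1)$ and no rate.
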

		\noindent The names ``regular'' and ``sparse'' zones, standard in the literature (see, again, \cite{WASA}), can be motivated as follows. In the regular zones, the hardest functions to be estimated are the ones characterized by a regular oscillatory behavior, that is, they are of a saw-teeth form. In the sparse zone the hardest function to estimate are those which are very regular everywhere but in small subsets of the domain, where they present strong irregularities. In this case, just few needlet coefficients $\betacm$ are not null, and that justifies the name ``sparse''. Observe that $p\leq 2$ corresponds always to the regular zone (see \cite{bkmpAoSb}). For further details and discussions, the reader is referred to \cite{donoho1, WASA}.\\
		As usual in the nonparametric setting, we can rewrite the $L^p$--risk as follows
		\begin{align*}
			\Ex \left[\norm{\fest-\fm}_{\Lp}^p\right] & = \Ex\left[\norm{ \sumjtrunc \eta\left(\betamest, \thres\right) \psijk\left(\theta\right) \psijk-\sumj \betac \psijk }_{\Lp}^p\right]\\
			& = \Ex\left[\norm{ \sumjtrunc \left[\eta\left(\betamest, \thres\right)-\betac \right]\psijk  -\sum_{j\geq \Jn} \betac \psijk } _{\Lp}^p\right]\!, 
		\end{align*}
		so that it can be bounded as follows
		\begin{align*}
			\Ex &\left[\norm{\fest-\fm}_{\Lp}^p\right] \\
			& \leq 2^{p-1}\left[\Ex\left[ \norm{ \sumjtrunc \left[\eta\left(\betamest, \thres\right)-\betac \right]\psijk  } _{\Lp}^p  \right]+\norm{ \sum_{j\geq \Jn} \betac \psijk } _{\Lp}^p   \right]\\
			&=2^{p-1}\left(\Sigma_p+D_p\right),
		\end{align*}
		with the natural extension for $p=\infty$.\\ The term $\Sigma_p$ can be read as the stochastic error to the replacement of the true needlet coefficients with the selected empirical ones, and $D_p$ is the deterministic error which arises when we select only a finite set of empirical coefficients (see also \cite{bkmpAoSb}). While the bias term $D_p$ does not affect the rate of convergence for $s > \frac{d}{p}$, the asymptotic behavior of the stochastic error $\Sigma_p$ is established thanks to the so-called \emph{optimal bandwidth selection}, that is, the resolution level $\Js$ defined by
		\begin{equation}\label{eq:optimalselection}
			\Js:B^{\Js} \approx \begin{cases}
			\left(\frac{n}{\log n}\right)^{\frac{1}{2\left(s+\mabs\right)+d}} & \text{(regular zone)}\\
			\left(\frac{n}{\log n}\right)^{\frac{1}{2\left(s+\mabs+d\left(\frac{1}{2}-\frac{1}{2}\right)\right)}} & \text{(sparse zone)}\end{cases}
		\end{equation}
		\begin{remark}\label{rem:optimal} Following \cite{bkmpAoSb}, but also \cite{efrom,kpt96}, both in the regular and in the sparse zones for any $k=1, \ldots \Kj$, $\abs{\betac}\geq \thres$ implies that $j\leq \Js$. Contrarily, the converse implication is not true. Anyway, in this case, the coefficient $\betamest$ should be discarded, since its error would be of order $B^{j\mabs}n^{-\frac{1}{2}}$, as shown in \eqref{eq:var} (see also \cite{durastanti6}), and this consideration motivates the choice of the threshold $\thres$ in \eqref{eq:thres}. The true value of $s$ is unknown and, then, establishing explicitly $\Js$ is not possible. Nevertheless, the sum \eqref{eq:esti} truncated at $\Jn$ includes all the terms up to $\Js$, since 
		$$
		\Js \leq \Jn.
		$$		
		\end{remark}
		Finally, the next result establishes a lower bound for the rate of convergence, yielding optimality.  
		\begin{theorem}[Lower bound]\label{thm:maintheoremlower}
			Let $f \in \besovm\left(G\right)$, where $s-\frac{d}{r}>0$, let $\fest$ be defined by \eqref{eq:esti}, with $\eta$ given by \eqref{eq:thres} or \eqref{eq:thressoft}. Hence, if $1\leq p \leq \infty$
			\begin{equation*}\label{eq:maintheoremlower}
				\sup_{f \in \besov\left(G\right)} \Ex \left[\norm{\fest-\fm}_{\Lp}^p\right] \gtrsim n^{-\alpha\left(s,\mabs,p,r\right)},
			\end{equation*}
			where
			\begin{equation*}
			\alpha\left(s,\mabs,p,r\right)= \begin{cases}
			\frac{ps}{2\left(s+\mabs\right)+d} & \text{for }r\geq \frac{\left(2\mabs+d\right)p}{2\left(s+\mabs\right)+d}  \quad\text{(regular zone)}\\
			\frac{p\left(s+d\left(\frac{1}{p}-\frac{1}{r}  \right)\right)}{2\left[\left(s+\mabs\right)+d\left(\frac{1}{2}-\frac{1}{r} \right) \right]} & \text{for }r< \frac{\left(2\mabs+d\right)p}{2\left(s+\mabs\right)+d}  \quad\text{(sparse zone)}
			\end{cases},
			\end{equation*}
			as given by \eqref{eq:rate}. 
		\end{theorem}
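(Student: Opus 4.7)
The plan is to establish \eqref{eq:maintheoremlower} as a minimax lower bound (taking the infimum over all measurable estimators), from which the statement for the specific estimator $\fest$ follows at once. The strategy is the classical Kerkyacharian-Picard reduction of estimation to hypothesis testing, adapted to the toroidal needlet setting and to the derivative operator $\Dm$, in the spirit of \cite{bkmpAoSb} and \cite{dmg}: Assouad's lemma serves the regular zone and a Fano-type inequality on a sparse hypothesis class serves the sparse zone. The key structural tool is Lemma \ref{lemma:supinf}, whose subset $A_{j^\ast} \subset \bbra{1,\ldots,K_{j^\ast}}$ of cardinality $\gtrsim B^{j^\ast d}$ yields the reversed bound \eqref{eq:inf}, which is precisely what converts a Hamming separation between hypotheses into an $L^p$-separation between their $m$-th derivatives.

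\emph{Regular zone.} First, set $B^{j^\ast} \approx \bra{n/\log n}^{1/(2(s+\mabs)+d)}$ as in \eqref{eq:optimalselection} and introduce
$$
f_\varepsilon\(\theta\) = \frac{1}{\bra{2\pi}^d} + \delta \sum_{k \in A_{j^\ast}} \varepsilon_k \psi_{j^\ast,k}\(\theta\), \qquad \varepsilon \in \bbra{-1,+1}^{\abs{A_{j^\ast}}},
$$
with amplitude $\delta \approx n^{-1/2}$. The localization estimate of Lemma \ref{lemma:localization} (for $\mabs = 0$) guarantees that each $f_\varepsilon$ is nonnegative and integrates to one for $n$ large, while Proposition \ref{prop:pippo} confirms $f_\varepsilon \in \besovm\(G\)$ for a suitable radius, since the Besov budget $\delta^r \sum_{k \in A_{j^\ast}} \norm{\psi_{j^\ast,k}}_{\Lr}^r \lesssim B^{-j^\ast r (s+\mabs)}$ requires only $\delta \lesssim B^{-j^\ast(s+\mabs+d/2)}$, a bound that matches $n^{-1/2}$ up to logarithmic factors. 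Writing $\varepsilon^{(k)}$ for the vector obtained by flipping the $k$-th coordinate of $\varepsilon$, the chi-square distance between $f_\varepsilon$ and $f_{\varepsilon^{(k)}}$ is of order $\delta^2 \norm{\psi_{j^\ast,k}}_{\Ltwo}^2 \approx \delta^2$, so $n$ observations cannot detect a single bit flip. Assouad's lemma combined with \eqref{eq:inf} applied to the coefficient sequence $\bra{2\delta \varepsilon_k}_{k \in A_{j^\ast}}$ delivers
$$
\inf_{\widehat{T}} \sup_\varepsilon \Ex\sbra{\norm{\widehat{T} - \Dm f_\varepsilon}_{\Lp}^p} \gtrsim \abs{A_{j^\ast}} \bra{\delta B^{j^\ast\bra{\mabs + d\bra{1/2 - 1/p}}}}^p,
$$
and substituting the chosen $\delta$ and $j^\ast$ reproduces the regular-zone rate $n^{-ps/(2(s+\mabs)+d)}$.

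\emph{Sparse zone and $p = \infty$.} Here the perturbation is concentrated on a sparse subset $S \subset A_{j^\ast}$ of cardinality $N \ll \abs{A_{j^\ast}}$, with amplitude $\delta$ that grows as $N$ decreases, since the Besov constraint from Proposition \ref{prop:pippo} now reads $N \delta^r B^{j^\ast r d (1/2 - 1/r)} \lesssim B^{-j^\ast r (s+\mabs)}$, while the $L^p$-separation $\gtrsim N^{1/p} \delta B^{j^\ast(\mabs + d(1/2 - 1/p))}$ from \eqref{eq:inf} improves. Equalizing the Besov budget with the Fano detectability budget $n \delta^2 N \lesssim \log \binom{\abs{A_{j^\ast}}}{N}$, and applying a Fano-type inequality to the family of $\binom{\abs{A_{j^\ast}}}{N}$ hypotheses indexed by $S$, yields the sparse rate. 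For $p = \infty$, Le Cam's two-point method suffices: the alternatives $f_0 = \bra{2\pi}^{-d}$ and $f_1 = f_0 + \delta \psi_{j^\ast,k^\ast}$ are indistinguishable at scale $n^{-1/2}$, while the $L^\infty$-gap of their $m$-th derivatives is $\gtrsim \delta B^{j^\ast(\mabs + d/2)}$ by Corollary \ref{cor:dernorm}, which after the optimal balance produces the rate \eqref{eq:rate2}.

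The principal obstacle is ensuring simultaneously the nonnegativity of $f_\varepsilon$, the Besov membership and the detectability constraints, while correctly tracking the amplification factor $B^{j^\ast \mabs}$ introduced by Lemma \ref{lemma:derivative} through $\Dm$. The $B^{j\mabs}$ arising in the $L^p$-separation of $m$-th derivatives must match the $B^{j\mabs}$ appearing in the threshold $\thres$ of \eqref{eq:thresini}, which is precisely what forces the exponent $2(s+\mabs)+d$ in \eqref{eq:optimalselection}. Controlling pairwise Kullback-Leibler divergences when perturbation supports overlap is also delicate, but it is handled via the almost-disjointness of needlet localization from Lemma \ref{lemma:localization} and the tight-frame estimate underlying Lemma \ref{lemma:supinf}.
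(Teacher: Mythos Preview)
Your plan is sound and would succeed, but it diverges from the paper's proof in two places. In the regular zone you run Assouad's cube method on $\{-1,+1\}^{|A_{j^\ast}|}$, while the paper instead applies Fano's lemma (Lemma \ref{lemma:fano}) together with the Varshanov--Gilbert extraction (Lemma \ref{lemma:vglemma}) of a well-separated subset of $\{0,1\}^{|A_{j^\ast}|}$; the two devices are interchangeable here and produce the same rate with the same calibration of $j^\ast$ and amplitude. The more interesting difference is in the sparse zone. You sketch a combinatorial Fano argument over $\binom{|A_{j^\ast}|}{N}$ sparse configurations with a three-way optimization in $(j^\ast,N,\delta)$, but the paper obtains the sparse exponent from a bare two-point comparison: it takes $f_0=(2\pi)^{-d}+\zeta\,\psi_{j,k}$ and $f_1=(2\pi)^{-d}+\zeta\,\psi_{j,k'}$ for two well-separated $k,k'\in A_j$ and $\zeta\approx n^{-1/2}$. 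Because only a \emph{single} needlet appears, the Besov constraint reads $|\zeta|\lesssim B^{-j(s+\mabs+d(1/2-1/r))}$ with no $B^{jd/r}$ cardinality penalty, which permits the larger resolution $B^{j}\approx n^{1/(2(s+\mabs+d(1/2-1/r)))}$; the $L^p$-separation $\gtrsim \zeta\,B^{j(\mabs+d(1/2-1/p))}$ from \eqref{eq:inf} then delivers the sparse rate directly via Fano (or Le Cam) on two hypotheses. Your route would also work, but the paper's two-point shortcut sidesteps the combinatorics and makes transparent why the $r$-dependence enters the sparse exponent. Your treatment of $p=\infty$ by Le Cam is essentially the same as the paper's two-point step specialized to the sup-norm.
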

\section{Numerical evidence}\label{sec:num}
In this section, we produce the results of some numerical experiments for $d=1,2$. We present the empirical evaluation of the $L^p$-risks ($p=2,3,5,\infty$), computed thanks to local thresholding techniques for the first and the second derivatives of some test density functions with respect to the choice of several values of the truncation bandwidth
 $\Jn$, the cardinality of cubature points $\Kj$ for any $j=1,\ldots,\Jn$, and the threshold constant $\kappa$. The weight function $b$ has been defined as the suitably rescaled primitive of the function $u \mapsto \exp(-\left(1-x^2\right)^{-1})$ (cf. Figure \ref{fig:01bis} and see also \cite{MaPeCUP}). Note that, as the simulations are produced over finite samples, they should be read as a reasonable hint.\\
	\begin{figure}\centering
	\includegraphics[width=0.6\textwidth]{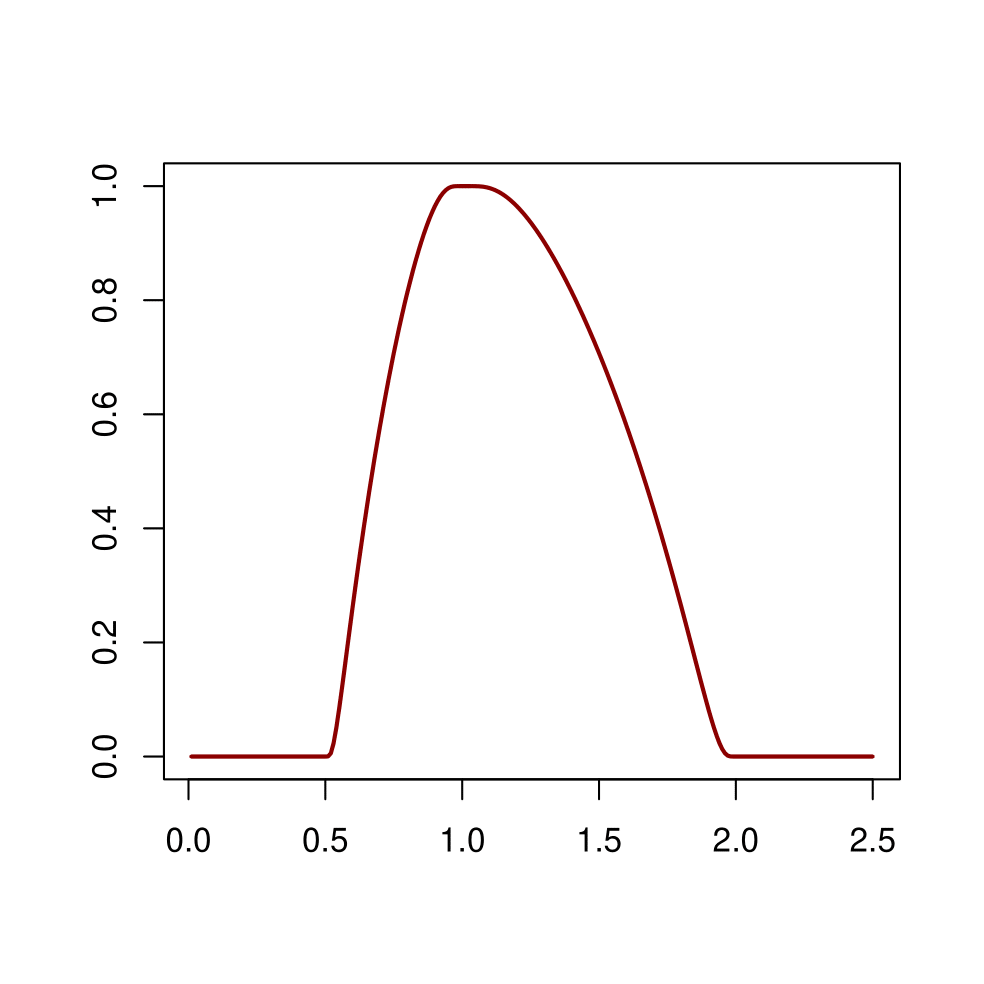}
	\caption{The needlet weight function $b$.}
	\label{fig:01bis}
\end{figure}
First, fixed $\N \in \naturals$, \eqref{eq:trunc} hints a value for truncation bandwidth $\Jn$. 
For any fixed $n,m$, let $L_\text{max}$ denote the highest eigenvalue for frequency indexes admitted within the frame $ \{ \psi_{j,k}^{(m)};j=1,\ldots,\Jn,k=1,\ldots,\Kj \}$ without being annihilated by the weight function $b$.   
For $\N = 2000, 10000$, we have 
$J_{2000,1}=3$ ($L_\text{max}=15$), $J_{2000,2}=2$ ($L_\text{max}=7$), for the first and the second derivatives respectively, and 
$J_{10000,1}=4$ ($L_\text{max}=31$), $J_{10000,2}=3$ ($L_\text{max}=15$).\\ 
Finally, concerning the thresholding constant $\kappa$, for which this paper only establishes a lower bound, we follow the lead given by \cite[Section 6]{bkmpAoSb}. For $q \in \naturals$, let us now define
$$
I_q = 
\int_{\frac{1}{B}}^{B} u^{q} b^{2}\left(u\right)\diff u;
$$ 
Chosen the function $b$, $I_{\mabs}/B^{j\mabs}$ provides an indication about the size of the square of the $L^2$-norm of $\psi_{j,k}^{(m)}$, since 
$$
\lim_{j\rightarrow 	\infty } \norm{\psi_{j,k}^{(m)}}_{L^2\left(\Td\right)}^2 = I_{\mabs}.
$$
In \cite[Section 6]{bkmpAoSb}, where for $m=0$, the authors have chosen on the sphere the analogous of $\kappa = \kappa_0 M I_0$, with $k_0>0$ and $M = \norm{f}_{L^\infty \left(\Td\right)}$, since using Cauchy-Schwarz inequality yields 
$$\left \vert \widehat{\beta}_{j,k}^{(0)}\right \vert = \int_{\Td} f(\theta) \bar{\psi}_{j,k}(\theta)\ \lesssim  M \norm{\psi_{j,k}}_{L^2\left(\Td\right)} \simeq M I_0.$$
In our case, for $d \in 1,2$ and $m\in\mathbb{N}_0^d$ such that $\mabs=1,2$, we choose  
$$
\kappa=\kappa_0 M I_{\mabs}, 
$$
trying different values for $k_0$.   
\paragraph {The uniform density.} We define on the unit circle ($d=1$) the following uniform density:
 $$f_{\operatorname{test};1}\left(\theta\right)=\frac{1}{2\pi}, \quad \theta \in \mathbb{T}^1, $$
so that 
$$
\betacm = 0 \quad \text{for any } j\in \naturals,k=1,\ldots,\Kj.$$ 
\\
In this case, counting the number of coefficients surviving the thresholding procedure gives an overview of the performance of our proposal. A measure of the goodness of the fit is obtained by estimating the $p$-powers of the $L^p$-risks, by means of the sums of the $p$-th powers of the selected coefficients.\\ 
For $\N=2000,10000$, all the empirical needlet coefficients ($m=0,1,2$) are discarded for $k_0=5.1$ and $k_0=3.8$.
Table \ref{tab:01} provides some evidence on the rate, collecting the estimates of the $p$ powers of the $L^p$-norms, $p=2,3,5,\infty$ for $k_0 = 0, 1, 2,3,4$, showing that the $L^p$-risk decreases along with $p$ and $\N$. Note that the case $k_0=0$ corresponds to the so-called linear estimator, where no selection on the needlet coefficients is performed. Finally, a comparison is provided with the estimator $\tilde{f}^{(m)}$, built by means of the standard harmonic basis on $\Td$ and defined by $\tilde{f}(\vartheta)=\sum_{\ell \in L_n} \tilde{a}_\ell^{(m)} S_\ell(\vartheta)$, where $$\tilde{a}^{(m)}_\ell=\frac{(-1)^m}{n}\sum_{i=1}^n \bar{S}_{\ell}^{(m)}(X_i)$$ and $L_n =\{\ell \in \mathbb{Z}^d: \abs{\ell} \leq B^{\Jn+1} \}$. Observe that, this choice of $L_n$ the $\tilde{f}$ and the needlet linear estimator are characterized by the same $\Lp$-risk, the latter being a linear combination of the former. 
As expected, for the highest value of $k_0$, no coefficient survives the thresholding procedure and then we find a higher value for all the risks. 
\begin{table}\centering
	\begin{tabular}{ |c|c||c|c|c|c|c| } 
		\hline
		
		\multicolumn{2}{|c||}{$L^p$-risks (power $p$)} & $\tilde{f}$ and $k_0=0$ & $k_0=1$ & $k_0=2$ & $k_0=3$& $k_0=4$ \\ 
		\hline
		\multicolumn{7}{|c|}{$n=2000$}\\
		\hline	
		\multirow{2}{*}{$p=2$}  & $m=1$ & $0.00841$ & $0.00722$  & $0.00296$ & $0.00249$  &  $0.00120$\\ 
		 & $m=2$ & $0.07470$  & $0.07350$  & $0.06129$ & $0.05879$ &  $0.05643$ \\  
		 \hline
		\multirow{2}{*}{$p=3$}  & $m=1$ & $1.23\cdot 10^{-3}$ & $1.04\cdot 10^{-3}$  & $2.91\cdot 10^{-4}$  & $2.43 \cdot 10^{-4}$ & $8.30 \cdot 10^{-5}$\\   
		&$m=2$ & $0.03366$  & $0.03282$  & $0.02850$ & $0.02798$ &  $0.02755$ \\ 
	  		\hline
		\multirow{2}{*}{$p=5$}  & $m=1$ & $4.19\cdot 10^{-5}$ & $3.37\cdot 10^{-5}$  & $3.82\cdot 10^{-6}$ & $2.99\cdot 10^{-6}$ & $5.09\cdot 10^{-7}$\\ 
		& $m=2$& $0.00947$  & $0.00908$  & $0.00822$ & $0.00814$  & $0.00779$ \\  
		\hline
		\multirow{2}{*}{$p=\infty$}  & $m=1$ & $0.26254$ & $0.26142$  & $0.25438$ & $0.20800$ & $0.06324$ \\
									&$m=2$ & $0.28402$  & $0.26142$  & $0.17695$ & $0.15665$ & $0.09593$ \\ 
		\hline							
		\multicolumn{7}{|c|}{$n=10000$}\\
		\hline	
		\multirow{2}{*}{$p=2$}  & $m=1$ & $0.01128$ & $0.00900$  & $0.00211$ & $0.00012$ & $0$\\
& $m=2$ & $ 0.11253$  & $0.09892$  & $0.07370$ & $0.01987$ & $0$ \\ 
\hline  
\multirow{2}{*}{$p=3$}  & $m=1$ &$1.67\cdot 10^{-3}$ & $1.28\cdot 10^{-3}$  & $1.94\cdot 10^{-4}$ & $4.07\cdot 10^{-7}$ & $0$\\
&$m=2$ & $0.05723$  & $0.04964$  & $0.03610$  & $0.00555$ & 0 \\  
\hline
\multirow{2}{*}{$p=5$}  & $m=1$ &$4.97\cdot 10^{-5}$ & $3.63\cdot 10^{-5}$  & $2.49\cdot 10^{-6}$ & $5.84\cdot 10^{-9}$ & $0$  \\ 
& $m=2$ &$0.02165$ & $0.01757$  & $0.01256$ & $0.00059$ & $0$  \\ 
\hline
\multirow{2}{*}{$p=\infty$}  & $m=1$ & $0.26721$ & $0.25019$  & $0.14390$ & $0.04776$ & $0$\\
&$m=2$ & $0.28172$  & $0.26501$  & $0.08387$ & $0.13909$ & $0$ \\
\hline 
	\end{tabular}
	\caption{Test density function $f_{\operatorname{test};1}$: some  $L^p$-risk evaluations ($p$-th powers) for $m=1,2$.}
	\label{tab:01}
\end{table}
\paragraph {The trigonometric density.} The next density function on the unit circle $d=1$ is defined as follows,
$$f_{\operatorname{test};2}\left(\theta\right)=\frac{1}{2\pi} \left(1+\frac{1}{2}\cos(2\theta)\right), \quad \theta \in \mathbb{T}^1. $$
This density features two local maxima (in $\pi/2$ and $3/2 \pi$) and two local minima (in $0$ and $\pi$). Using \eqref{eq:multider} yields
$$
\betacm = \cubew (-1)^m 2^m \bfun{\frac{2}{B^j}}\cos(2\xi_{j,k}) \quad \text{for any } j\in \naturals,k=1,\ldots,\Kj,$$ 
\\
which are not null only if $j=1$, in view of our choice of $b$. 
In this case, we aim to find an optimal threshold which allows to discard all the empirical coefficients with $j\neq 1$ but, at the same time, to keep the most relevant one with $j=1$ (corresponding to the peaks). In this sense, choosing a too high value for $k_0$ produces a worse $L^p$-risk estimate.\\ 
Table \ref{tabella:02} shows the estimates of the $p$-powers of the $L^p$-norms, $p=2,3,5,\infty$, for $k_0 = 2, 4,  8 , 25$ (again, the case $k_0=0$ is the linear estimator).
Table \ref{tab:03} collects the percentage of surviving coefficients. As expected, the better estimates for $L^p$-risks correspond to estimators where only coefficients with $j=1$ survive. 
\begin{table}\centering
	\begin{tabular}{ |c|c||c|c|c|c|c| } 
		\hline
		
		\multicolumn{2}{|c||}{$L^p$-risks} & $\tilde{f}$ and $k_0=0$ & $k_0=2$ & $k_0=4$ & $k_0=8$& $k_0=25$ \\ 
		\hline
		\multicolumn{7}{|c|}{$n=2000$}\\
		\hline	
		\multirow{2}{*}{$p=2$}  & $m=1$ & $2.05 \cdot 10^{-2}$ & $1.20 \cdot 10^{-2}$  & $2.04 \cdot 10^{-3}$ & $3.66 \cdot 10^{-5}$ & $1.26 \cdot 10^{-2}$ \\
		& $m=2$ &  0.03015 & 0.01800  & 0.00019 & 0.00019 & 0.05055 \\ 
		\hline 
		\multirow{2}{*}{$p=3$}  & $m=1$ &  $4.89 \cdot 10^{-3}$ &  $2.40 \cdot 10^{-3}$  &  $2.70 \cdot 10^{-4}$ &  $3.01 \cdot 10^{-7}$ &  $1.71 \cdot 10^{-3}$\\
		&$m=2$ & $7.41 \cdot 10^{-3}$  &  $3.59 \cdot 10^{-3}$ & $3.74 \cdot 10^{-6}$ & $3.74 \cdot 10^{-6}$ &  $1.36 \cdot 10^{-2}$\\
		\hline
		\multirow{2}{*}{$p=5$}  & $m=1$ & $4.27 \cdot 10^{-4}$ & $1.42 \cdot 10^{-4}$  & $6.22 \cdot 10^{-6}$ & $2.52 \cdot 10^{-11}$ & $3.47 \cdot 10^{-5}$ \\
		& $m=2$& $5.60 \cdot 10^{-4}$  & $1.84 \cdot 10^{-4}$  & $1.80 \cdot 10^{-9}$ & $1.80 \cdot 10^{-9}$ & $1.11 \cdot 10^{-3}$ \\ 
		\hline
		\multirow{2}{*}{$p=\infty$}  & $m=1$ & 0.40043& 0.31739  & 0.18048 & 0.01147 & 0.15915\\
		&$m=2$ & 0.35177  & 0.28271  & 0.02717 & 0.02717 & 0.31830 \\
		\hline	
		\multicolumn{7}{|c|}{$n=10000$}\\
		\hline	
		\multirow{2}{*}{$p=2$}  & $m=1$ & $1.82 \cdot 10^{-2}$ & $2.95 \cdot 10^{-3}$  & $4.17 \cdot 10^{-6}$ &  $4.17 \cdot 10^{-6}$ &  $1.26 \cdot 10^{-2}$ \\
		& $m=2$ &  $1.49 \cdot 10^{-1}$  &  $4.01 \cdot 10^{-2}$  &  $3.72 \cdot 10^{-5}$ &  $3.72 \cdot 10^{-5}$ &  $1.27 \cdot 10^{-2}$ \\ 
		\hline
		\multirow{2}{*}{$p=3$}  & $m=1$ & $3.92 \cdot 10^{-3}$ & $3.08 \cdot 10^{-4}$  & $1.19 \cdot 10^{-8}$ & $1.19 \cdot 10^{-8}$ & $1.71 \cdot 10^{-3}$ \\
		&$m=2$ & $8.71 \cdot 10^{-2}$  & $2.26 \cdot 10^{-2}$  & $3.18 \cdot 10^{-7}$ & $3.18 \cdot 10^{-7}$ & $1.36 \cdot 10^{-2}$ \\
		\hline
		\multirow{2}{*}{$p=5$}  & $m=1$ &  $2.62 \cdot 10^{-4}$  &  $4.57 \cdot 10^{-6}$   &  $1.22 \cdot 10^{-13}$  & $1.22 \cdot 10^{-13}$ &  $3.47 \cdot 10^{-5}$ \\
		& $m=2$& $4.52 \cdot 10^{-2}$  & $9.82 \cdot 10^{-3}$  & $2.89 \cdot 10^{-11}$ & $2.89 \cdot 10^{-11}$ & $1.10 \cdot 10^{-3}$ \\ 
		\hline 
		\multirow{2}{*}{$p=\infty$}  & $m=1$ & $0.38401$ & $0.15848$  & $0.00400$ & $0.00400$ & $0.15915$ \\
		&$m=2$ & $1.03924$  & $0.82377$  & $0.01186$ & $0.01186$ & $0.31830$ \\ 
		\hline
	\end{tabular}
	\caption{Test density function $f_{\operatorname{test};2}$: some $L^p$-risk evaluations, for $m=1$ (first entry) and $m=2$ (second entry) }
	\label{tabella:02}
\end{table}
\begin{table}\centering
	\begin{tabular}{ |c|c||c|c|c|c|c| } 
		\hline
		 \multicolumn{2}{|c|}{Surviving coefficients (\%)}	 & $k_0=0$ & $k_0=2$ & $k_0=4$ & $k_0=8$& $k_0=25$ \\ 
		\hline
				\multicolumn{7}{|c|}{$n=2000$}\\
		\hline
				\multirow{2}{*}{$j=0$} & $m=1$ & 100  & 50 & 0 & 0 & 0 \\ 
		& $m=2$ & 100  & 50  & 0 & 0 & 0 \\		\hline
		\multirow{2}{*}{$j=1$} & $m=1$ & 100 & 50  & 50  & 50 &  0  \\ 
		& $m=2$ & 100  & 50  & 50 & 50 & 0\\
				\hline
		\multirow{2}{*}{$j=2$} & $m=1$ & 100& 12.5  & 0  & 0 &  0  \\ 
		& $m=2$ & 100  & 31.25  & 0 & 0 & 0\\		\hline
		\multirow{2}{*}{$j=3$} & $m=1$ &100 & 21.875  & 3.125   & 0 &  0  \\ 
		& $m=2$ & NA  & NA  & NA & NA & NA\\
		\hline
				\multicolumn{7}{|c|}{$n=10000$}\\ 
		\hline
		\multirow{2}{*}{$j=0$} & $m=1$ & 100  & 50 & 0 & 0 & 0 \\ 
		& $m=2$ & 100  & 50  & 0 & 0 & 0 \\		\hline
		
		\multirow{2}{*}{$j=1$} & $m=1$ & 100  & 50 & 50 & 50 & 0 \\ 
		& $m=2$ & 100  & 62.50  & 50 & 50 & 0 \\		\hline
		\multirow{2}{*}{$j=2$} & $m=1$ & 100  & 0  & 0 & 0 & 0 \\ 
		& $m=2$ & 100  &  12.50 & 0 & 0 & 0\\		\hline
		\multirow{2}{*}{$j=3$} & $m=1$ & 100  & 6.25  & 0 & 0 & 0 \\ 
		& $m=2$ & 100  &  6.25 & 0 & 0 & 0\\ 		\hline
		\multirow{2}{*}{$j=4$} & $m=1$ & 100  & 6.25  & 0 & 0 & 0 \\ 
		& $m=2$ & NA  & NA  & NA & NA & NA \\	
		\hline
		\end{tabular}
		\caption{Test density function $f_{\operatorname{test};2}$: percentage of surviving coefficients, for first and second derivatives at $k_0=0,2,4,8,25$.}
		\label{tab:03}
\end{table}
\paragraph{Wrapped normal distribution.}
\begin{figure}
\centering
\begin{subfigure}[b]{0.32\textwidth}
	\centering
	\includegraphics[width=\textwidth]{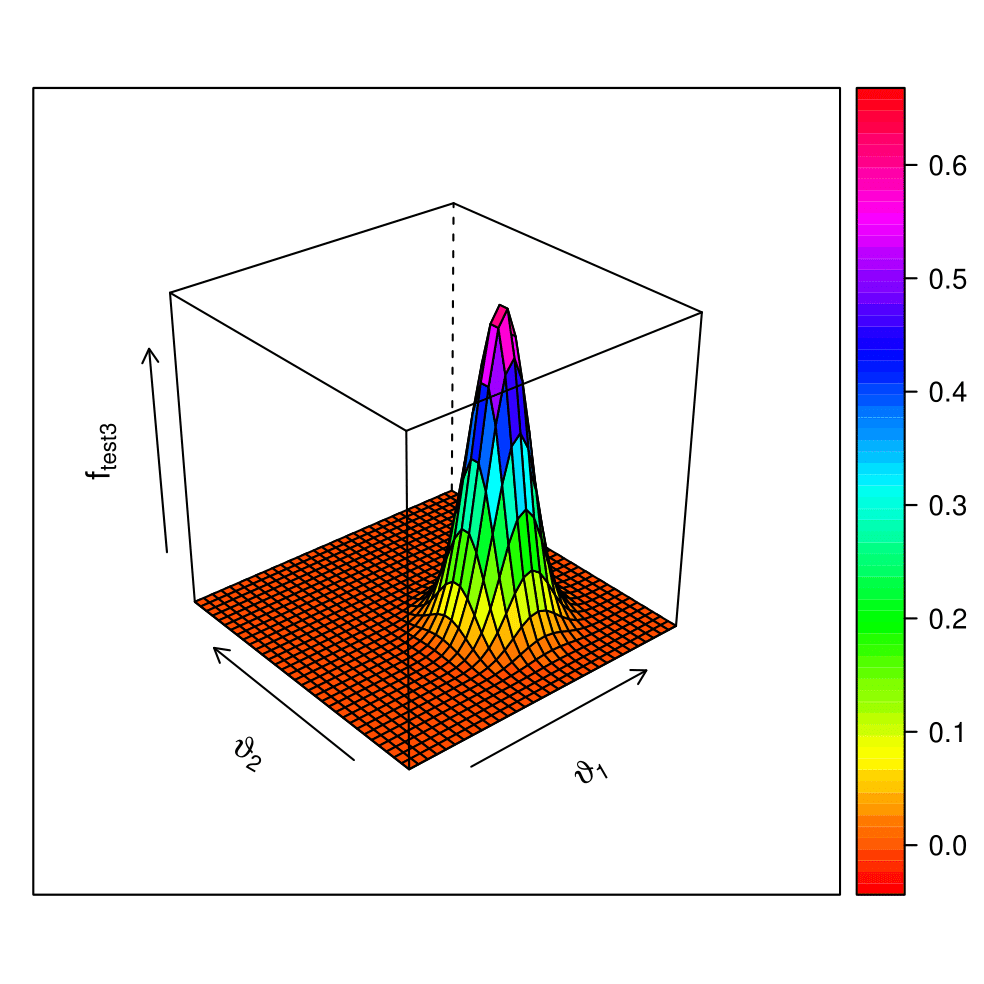}
	\caption{$f_{\operatorname{test};3}$}
	\label{fig:02a}
\end{subfigure}
\hfill
\begin{subfigure}[b]{0.32\textwidth}
	\centering
	\includegraphics[width=\textwidth]{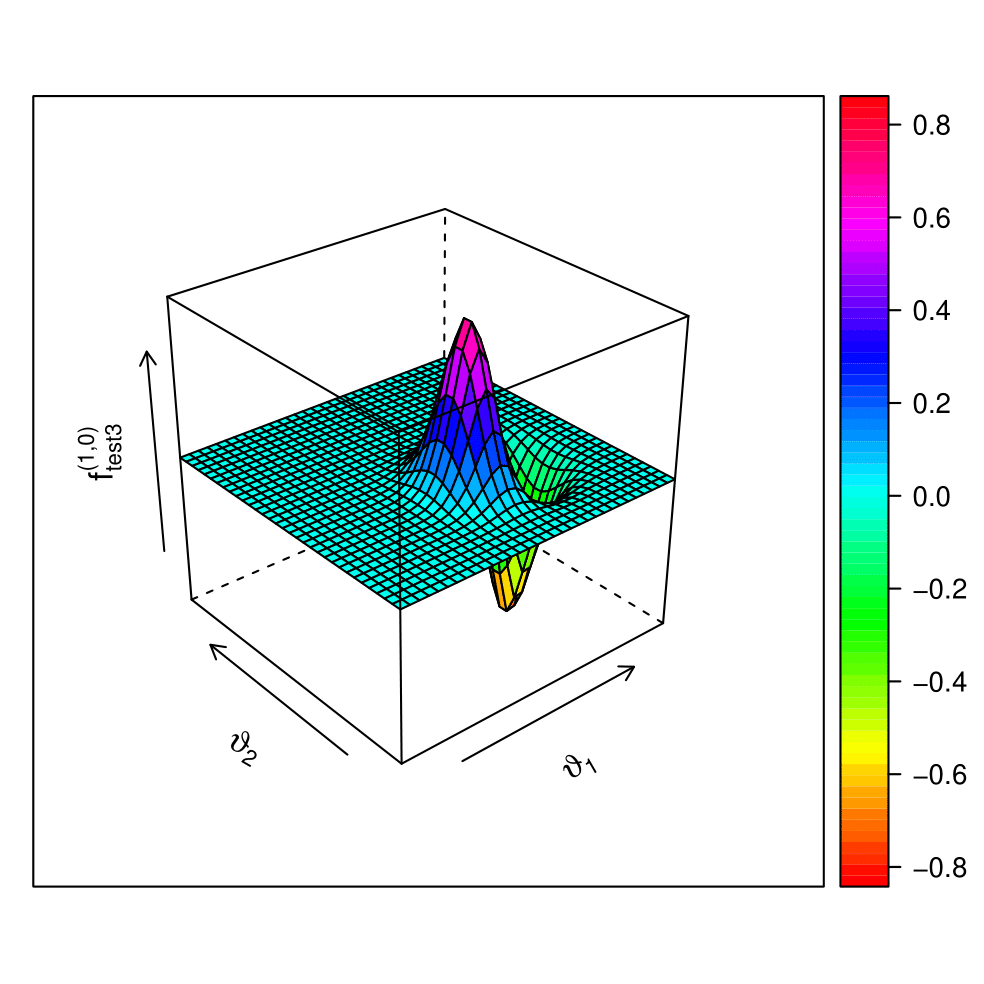}
	\caption{$f_{\operatorname{test};3}^{(1,0)}$}
	\label{fig:02b}
\end{subfigure}
\hfill
\begin{subfigure}[b]{0.32\textwidth}
	\centering
	\includegraphics[width=\textwidth]{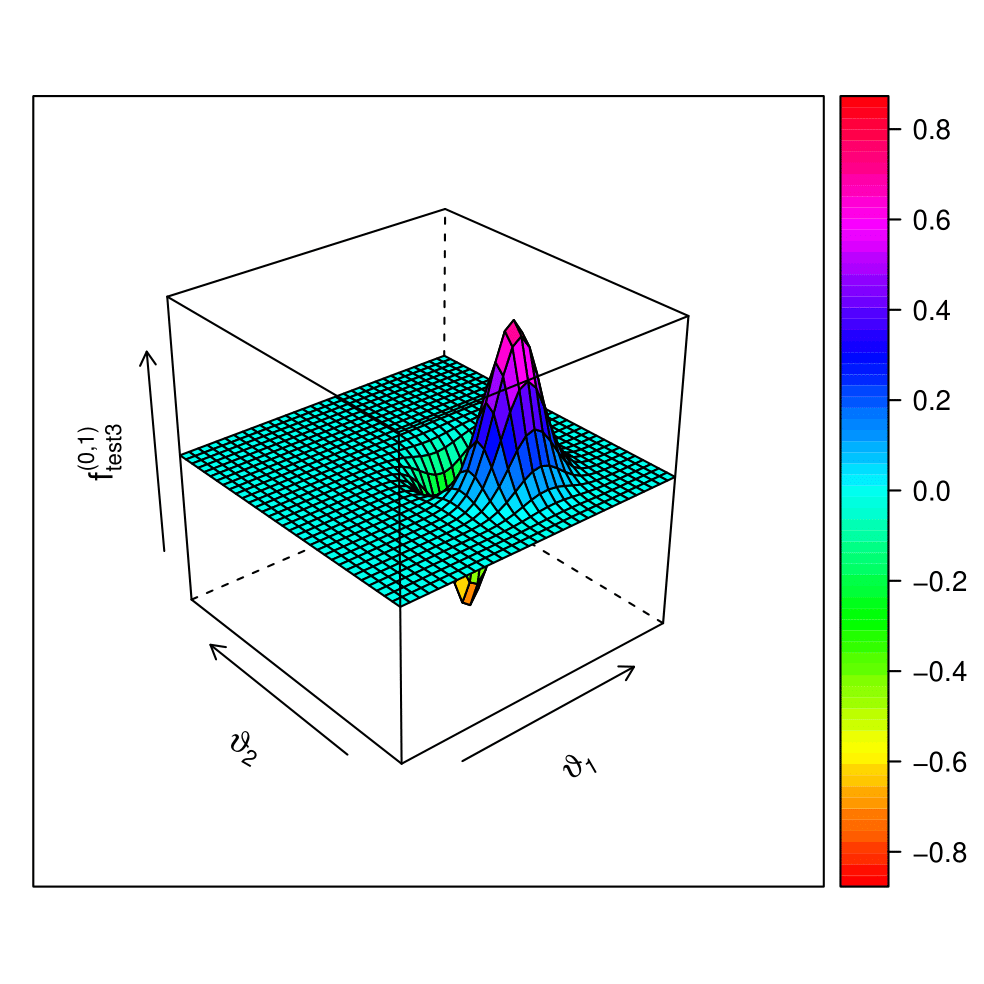}
	\caption{$f_{\operatorname{test};3}^{(0,1)}$}
	\label{fig:02c}
\end{subfigure}
\caption{The test density $f_{\operatorname{test};3}$ and its first derivatives}
\label{fig:02}
\end{figure}
Here we consider the density function of a $2$-d wrapped normal distribution, defined as:
$$f_{\operatorname{test};3}\(\theta_1,\theta_1\) = \frac{1}{2\pi \sigma_1\sigma_2}\sum_{k_1=-100}^{100}\sum_{k_2=-100}^{100} e^{-\frac{\(\theta_1-\mu_1+2\pi k_1\)^2}{2\sigma_1^2}-\frac{\(\theta_1-\mu_1+2\pi k_2\)^2}{2\sigma_1^2}}, \quad \left(\theta_1,\theta_2\right) \in \mathbb{T}^2. $$
We choose $\mu_1=3/4 \pi$, $\mu_2=5/4 \pi$, $\sigma_1=\sigma_2=0.5$. Figure \ref{fig:02} plots the graphs of $f_{\operatorname{test};3}$ and both its derivatives. Our goal is to study the two first derivatives of  $f_{\operatorname{test};3}$ testing several values of $k_0$. The goodness of our choice is evaluated thanks to the estimates of the $L^p$-risks as described above for $n=10000$. 
Table \ref{tab:04} contains the estimates of the $L^p$-norm, $p=2,3,5,\infty$ for $k_0 = 0, 0.5, 1, 8,15$. Some of the estimators here obtained are shown in Figure \ref{fig:03}. The optimal choice of $k$ lies around $k_0=1$, for which we obtain better $L^p$-risks. As shown in Figures \ref{fig:04b},\ref{fig:04e} the graph is evidently smoother and less perturbed than the linear estimator ($k_0=0$) (see Figures \ref{fig:04a},\ref{fig:04b}).
For $k_0=15$, the peak is lower than the theoretical one, so that smaller fluctuations appear more evidently also in the graph (Figures \ref{fig:04c},\ref{fig:04f}). 
\begin{table}\centering
	\begin{tabular}{ |c|c||c|c|c|c|c| } 
		\hline
		\multicolumn{2}{|c||}{$L^p$-risks, $n=10000$} & $\tilde{f}$ and $k_0=0$ & $k_0=0.5$ & $k_0=1$ & $k_0=8$& $k_0=15$ \\ 
		\hline
		\multirow{2}{*}{$p=2$}  & $m=(1,0)$ & 0.00806 & 0.00699  & 0.00485 & 0.00601 & 0.00781 \\
		& $m=(0,1)$ &  0.00889 & 0.00711  & 0.00493 & 0.00621  & 0.00801 \\ 
		\hline 
		\multirow{2}{*}{$p=3$} &$m=(1,0)$ & 0.00312 &  0.00227 &  0.00100  &    0.00102 &   0.00216 \\
		&$m=(0,1)$ & 0.00343  &  0.00271 & 0.00103 & 0.00110 &  0.00221\\ 
		\hline
		\multirow{2}{*}{$p=5$}  & $m=(1,0)$ & $1.05 \cdot 10^{-3}$ & $5.65 \cdot 10^{-4}$  & $9.53 \cdot 10^{-5}$ & $5.12 \cdot 10^{-5}$ & $3.42 \cdot 10^{-4}$ \\
		& $m=(0,1)$& $2.07 \cdot 10^{-3}$  & $6.24 \cdot 10^{-4}$  & $1.20 \cdot 10^{-4}$ & $9.81 \cdot 10^{-5}$ & $4.11 \cdot 10^{-4}$ \\ 
		\hline
		\multirow{2}{*}{$p=\infty$}  & $m=(1,0)$ & 0.94465& 0.80713  & 0.36937 & 0.43902 & 0.57350\\
		&$m=(0,1)$ & 0.95493  & 0.82432  & 0.36937 & 0.47981 & 0.57483 \\
		\hline	
		\hline
	\end{tabular}
	\caption{Test density function $f_{\operatorname{test};3}$: some $L^p$-risk evaluations for the first derivatives}
	\label{tab:04}
\end{table}
\begin{figure}
	\centering
	\begin{subfigure}[b]{0.32\textwidth}
		\centering
		\includegraphics[width=\textwidth]{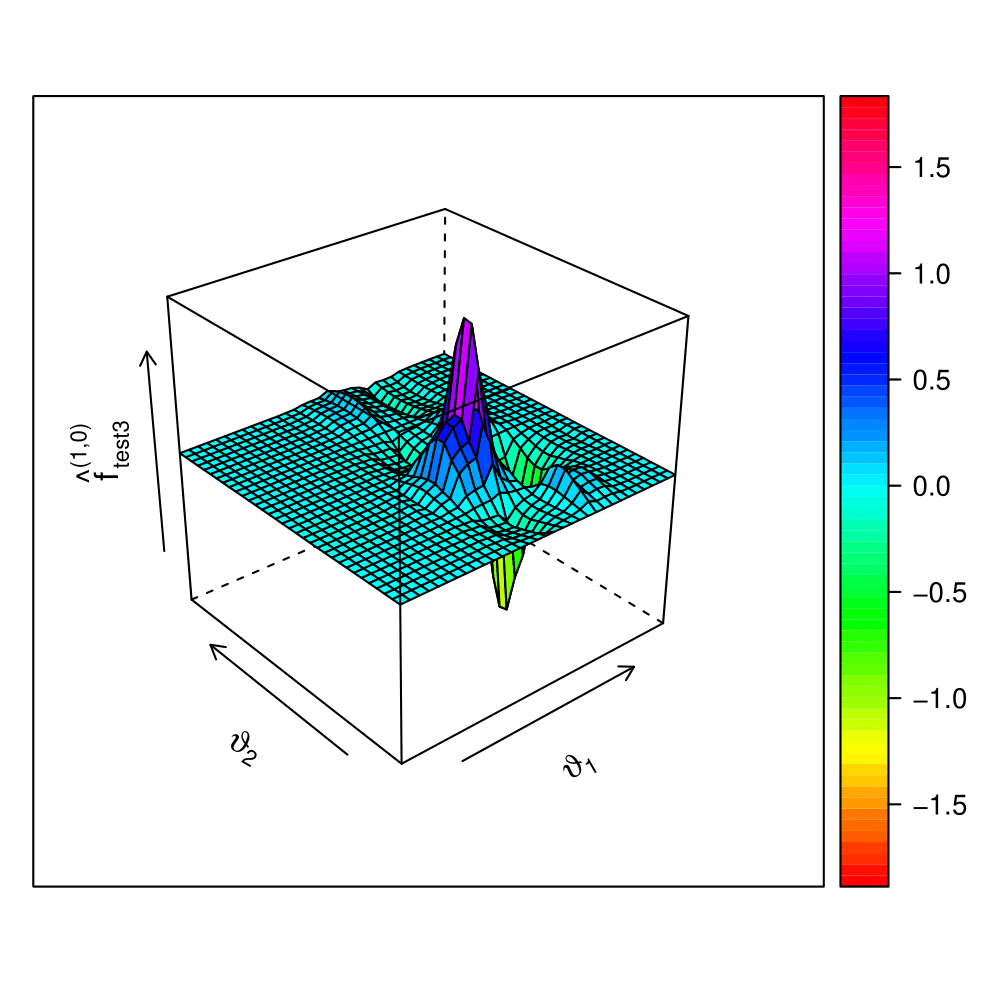}
		\caption{$k_0=0$}
		\label{fig:04a}
	\end{subfigure}
	\hfill
	\begin{subfigure}[b]{0.32\textwidth}
		\centering
		\includegraphics[width=\textwidth]{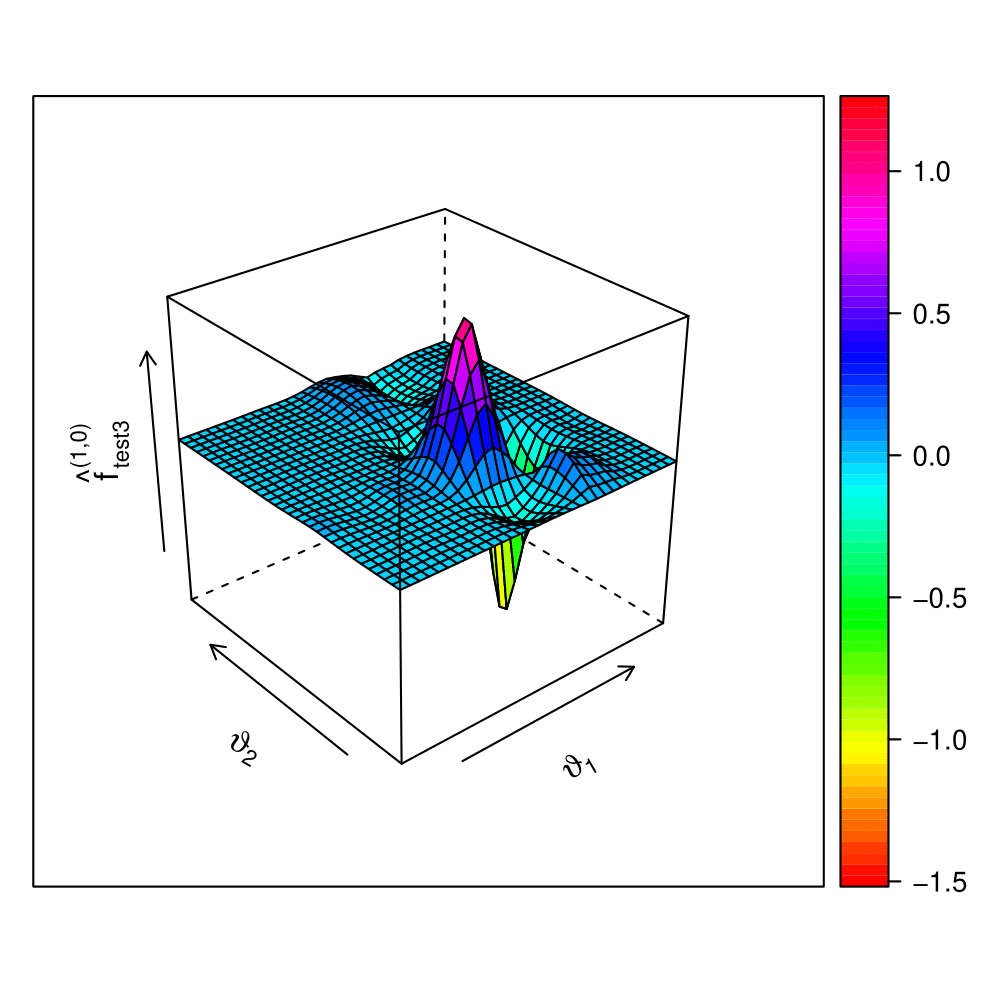}
		\caption{$k_0=1$}
		\label{fig:04b}
	\end{subfigure}
	\hfill
	\begin{subfigure}[b]{0.32\textwidth}
		\centering
		\includegraphics[width=\textwidth]{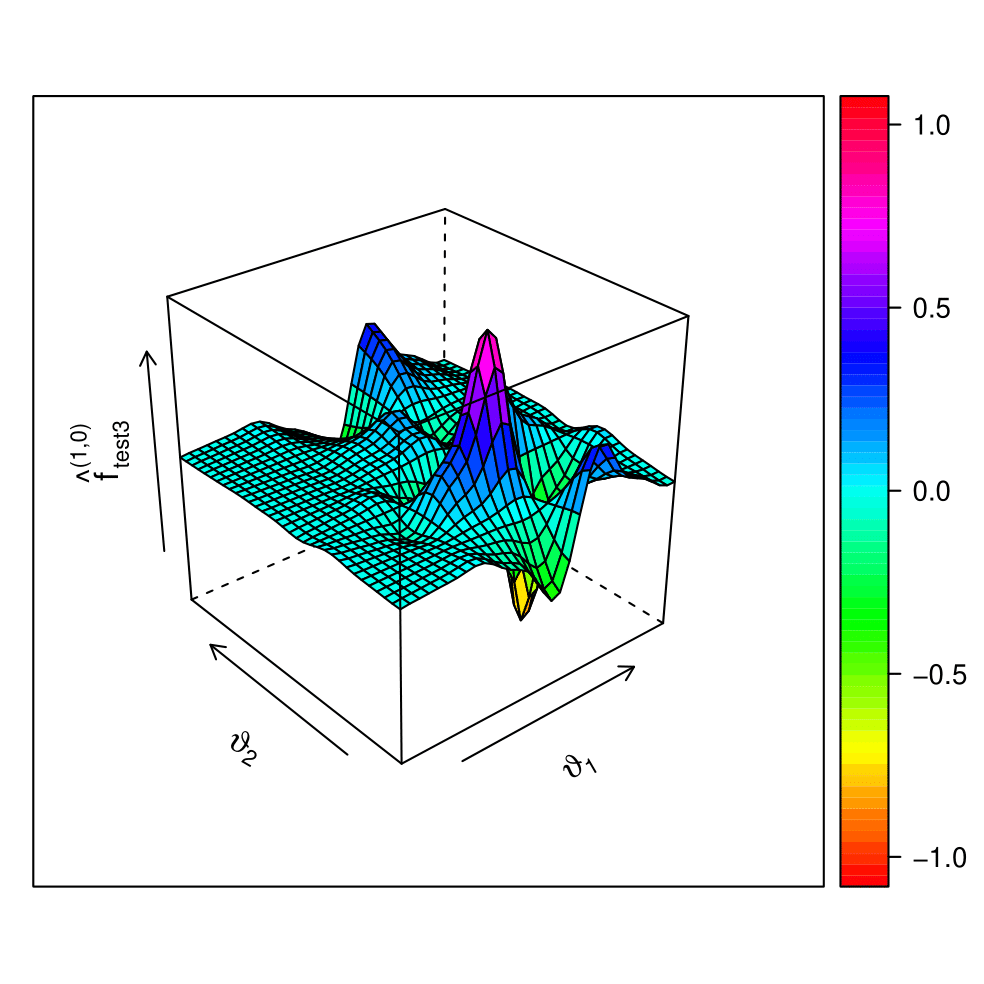}
		\caption{$k_0=15$}
		\label{fig:04c}
	\end{subfigure}
	\begin{subfigure}[b]{0.32\textwidth}
	\centering
	\includegraphics[width=\textwidth]{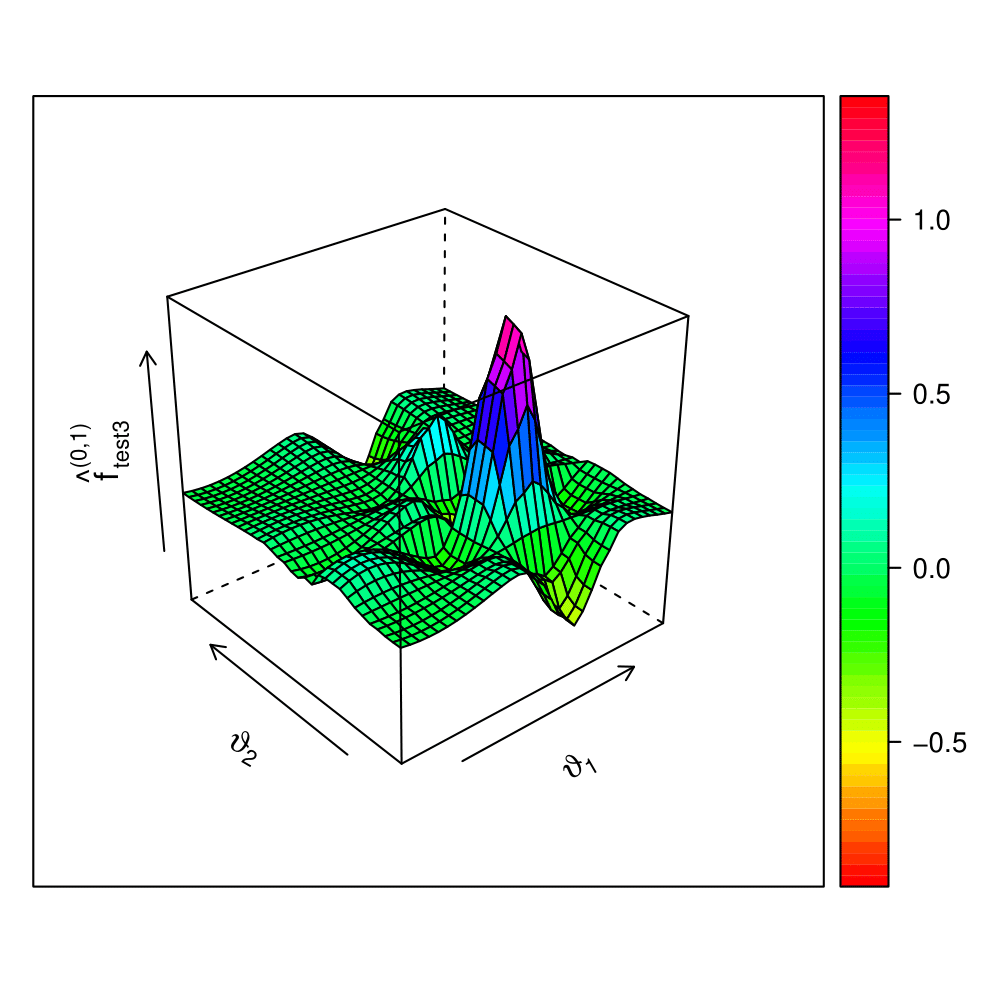}
	\caption{$k_0=0$}
	\label{fig:04d}
\end{subfigure}
\hfill
\begin{subfigure}[b]{0.32\textwidth}
	\centering
	\includegraphics[width=\textwidth]{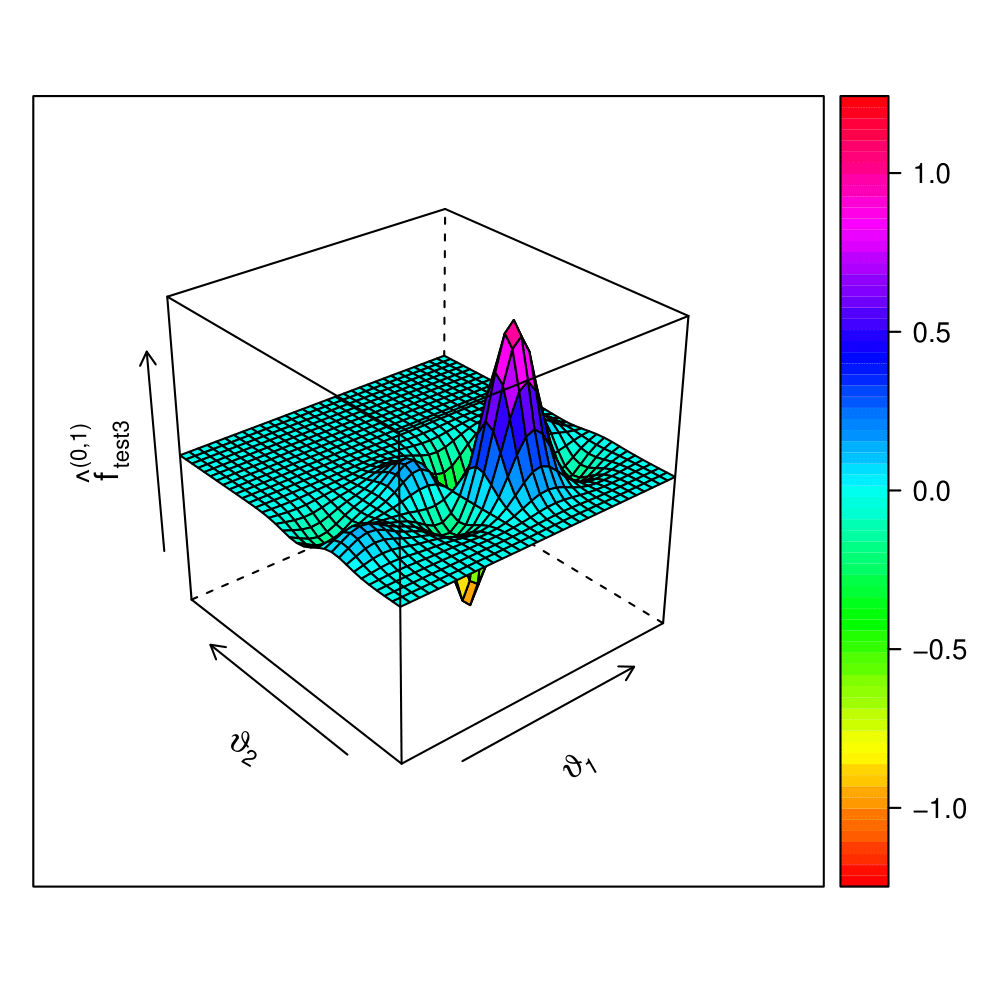}
	\caption{$k_0=1$}
	\label{fig:04e}
\end{subfigure}
\hfill
\begin{subfigure}[b]{0.32\textwidth}
	\centering
	\includegraphics[width=\textwidth]{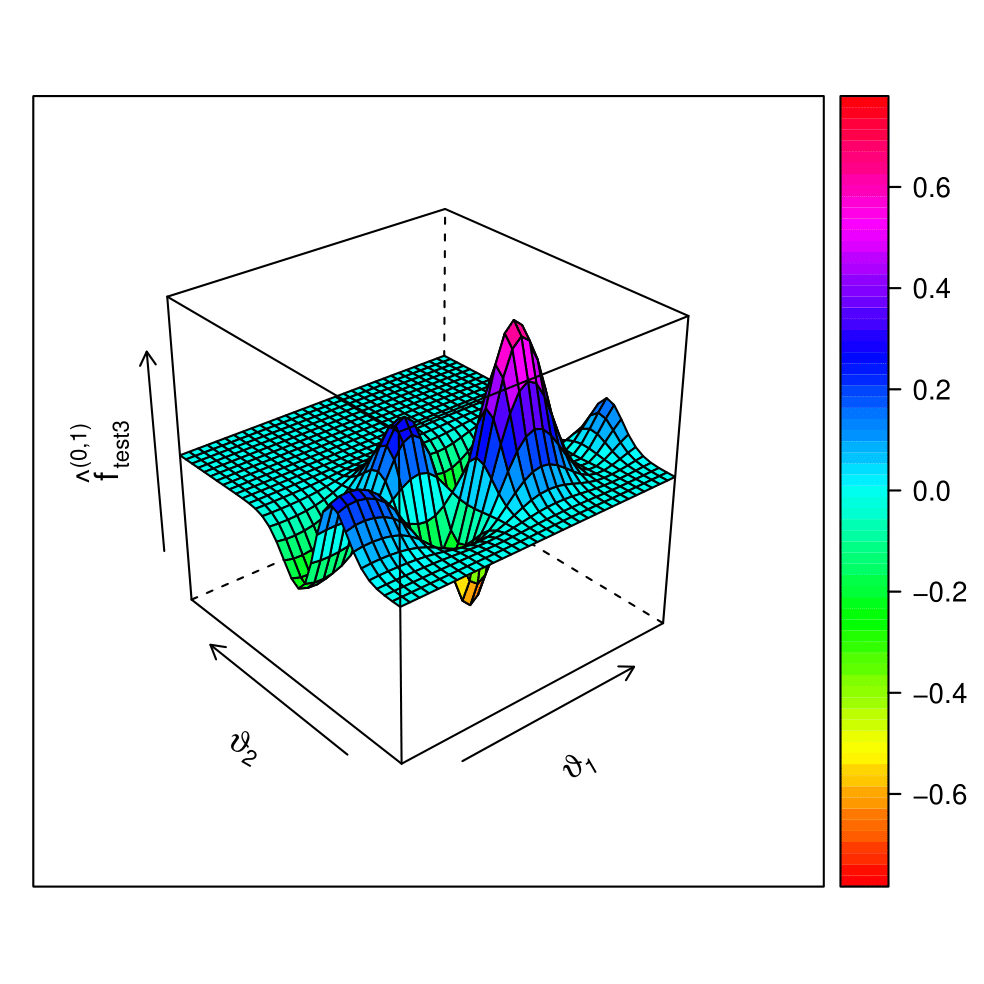}
	\caption{$k_0=15$}
	\label{fig:04f}
\end{subfigure}
	\caption{The density estimators $\hat{f}_{\operatorname{test};3}^{(1,0)}$ (top panels), and $\hat{f}_{\operatorname{test};3}^{(0,1)}$ (bottom panels), for $k_0=0,1,15$ }
	\label{fig:03}
\end{figure}

The interaction between multiresolution and the threshold technique shows here its advantages: finer levels of resolution are non discarded only at locations where they the theoretical curve is less regular. Hence, the multiresolution properties of the thresholding needlet estimator allow for local adaptation if the considered derivative presents peaks and different slopes.

\section{Proofs}\label{sec:proof}
\subsection{Auxiliary results on needlet derivatives and Besov spaces}\label{sec:needproof}
\begin{proof}[Proof of Lemma \ref{lemma:derivative}] First of all, observe that, for any $i=1,\ldots,d$ and for any $n \in \integers$, it holds that
	\begin{equation*}
	\frac{\partial^{m_i}}{\partial \theta_{i}^{m_i}} \Sn\(\theta\)=\(-1\)^{m_i}\ell_i^{m_i} \Sn\(\theta\).
	\end{equation*}
	Thus, 
	$$
	\Dm \Sn \(\theta\)= \(-1\)^{\mabs}\prod_{i=1}^{d} \ell_i^{m_i} \Sn\(\theta\).
	$$
	Using \eqref{eq:multider} yields
	\begin{align*}
	\Dm \psijkm\(\theta\) &= \sqrt{\cubew} \sumn \bfun{\barg} \Snc \(\cubep\) \Dm \Sn\(\theta\)\\
	&= \sqrt{\cubew} \sumn \bfun{\barg}  \(-1\)^{\mabs}\prod_{i=1}^{d} \ell_i^{m_i} \Snc \(\cubep\) \Sn\(\theta\)\\
	&= \sqrt{\cubew} B^{j\mabs} \sumn \bfunm{\ell} \Snc \(\cubep\) \Sn\(\theta\),
	\end{align*}
	as claimed.
\end{proof}
\begin{proof}[Proof of Lemma \ref{lemma:sumbeta}]
	First of all, observe that \eqref{eq:sumbeta1} is obtained by integrating iteratively \eqref{eq:derrec} by parts, thanks to the periodicity of the function $f$ over $\Td$. 
	As far as \eqref{eq:sumbeta2} is concerned, for any $\theta \in \Td$, observe first of all that
	\begin{align*}
	\sum_{k=1}^{\Kj}\psijk \(\theta\) \cc{\psijkm}\(\theta^\prime\) & = \sum_{k=1}^{\Kj} \cubew  B^{j\mabs} \sumn \sum_{\npr \in \Lambdaj} \bfunm{\ell} b_j^{\(0\)}\(\npr\) \cc{\Sn}\(\theta^\prime\) \Sn\(\cubep\)\cc{\Snprime}\(\cubep\)\Snprime\(\theta\) \\
	& =  \(-1\)^{\mabs} \sumn\( \prod_{i=1}^d \ell_i^{m_i} \) b^2\(\barg\) \Sn\(\theta-\theta^\prime\).
	\end{align*}
	Thus, it holds that
	\begin{align*}
	\sum_{k=1}^{\Kj}\betacm \psijk\(\theta\) & =  \int_{\Td} f\(\theta^\prime\) \sumk \cc{\psijkm}\(\theta^\prime\)\psijk\(\theta\) \rho\(\diff \theta^\prime \)\\  
	&=  \(-1\)^{\mabs} \sumn\( \prod_{i=1}^d \ell_i^{m_i} \) b^2\(\barg\) \Sn\(\theta\) \int_{\Td} f\(\theta^\prime\) \Snc\(\theta^\prime\) \rho\(\diff \theta^\prime\) \\
	&=\sum_{k=1}^{\Kj} \cubew  B^{j\mabs} \sumn \sum_{\npr \in \Lambdaj} \bfunm{\ell} b_j^{\(0\)}\(\npr\) \angles{f,\Snprime}_{\Ltwo}  \Snprime\(\cubep\)\cc{\Sn}\(\cubep\)\Sn\(\theta\)\\
	& =\sum_{k=1}^{\Kj}\betac \psijkm\(\theta\), 
	\end{align*}	  
	as claimed.
\end{proof}
	\subsection{Auxiliary probabilistic results}
		\begin{proof}[Proof of Lemma \ref{lemma:probbounds}] 
			This proof follows strictly the one of \cite[Lemma 16]{bkmpAoSb}, see also \cite{dmg}.\\
			Observe that \eqref{eq:bernstein} is obtained by means of the Bernstein's inequality (see, for example, \cite{WASA}), applied to the set $\{\cc{\psijkm}\(X_1\),\ldots,\cc{\psijkm}\(X_{\N}\)\}$, observing additionally that 
			$$
				\Var	 	\left(\cc{\psijkm}\(X_i\)\right)\leq\Ex\left[\(\cc{\psijkm}\(X_i\)\)^2\right]\leq  M \norm{\psijkm}_{\Ltwo} ^2\leq M C_2^\ast B^{2j\mabs}, 
			$$	
			while 
			$$
				\abs{\cc{\psijkm}\left(X_i\right)} \leq \norm{\psijkm}_{\Linf}\leq C^\ast_\infty B^{j\left(\mabs+\frac{d}{2}\right)}\leq C^\ast_\infty B^{j\mabs}\sqrt{\N},
			$$
			since $B^{dj}\leq\N$.\\
			Now, in order to prove \eqref{eq:momenteta} and \eqref{eq:momentinf}, first observe that, for $x >0$,
			\begin{align*}
				\Pr\left(\abs{\betamest-\betacm}\geq x \right)\leq 2\left(\exp\left(-\frac{\N x^2}{4 B^{j\mabs}C^{\ast}_2 M }\right) \right) + \exp\left(-\frac{3\sqrt{\N}x }{4 B^{j\mabs} C^{\ast}_{\infty}  }\right).
			\end{align*} 
			Hence, on the one hand, for $\eta \geq 1$, we have
			\begin{align*}
				\Ex\left[\abs{\betamest-\betacm}^\eta\right] &=\int_{0}^{\infty} x^{\eta-1}\Pr\left(\abs{\betamest-\betacm}\geq x \right) \diff x\\
				& \leq 2 \int_{0}^{\infty} x^{\eta-1}\left(\exp\left(-\frac{\N x^2}{4 B^{j\mabs}C^{\ast}_2 M }\right) \right) + \exp\left(-\frac{3\sqrt{\N}x }{4 B^{j\mabs} C^{\ast}_{\infty}  }\right) \diff x\\ 
				& \lesssim \N^{-\frac{\eta}{2}} B^{j\mabs\frac{\eta}{2}}
			\end{align*}
			using 
			 $B^{j\mabs}\leq\N$. On the other hand, let us preliminarily fix $$a =\frac{1}{\(\log B\)} \max \left(\frac{8}{3}dC^\ast_{\infty},2\sqrt{2}d C_2^\ast M\right)$$ and then write
			\begin{align*}
				\Ex\left[\sup_{k=1,\ldots,\Kj}\abs{\betamest-\betacm}^\eta\right] &\leq\int_{0}^{\frac{ajB^{j\frac{\mabs}{2}}}{\sqrt{\N}}} x^{\eta-1}\diff x \\
				&+2\int_{\frac{ajB^{j\frac{\mabs}{2}}}{\sqrt{\N}}}^{\infty}x^{\eta-1}\Pr\left(\sup_{k=1,\ldots,\Kj}\abs{\betamest-\betacm}\geq x \right)\!\diff x\\
				& \leq 2 \int_{0}^{\infty} \! x^{\eta-1}\left(\exp\left(-\frac{\N x^2}{4 B^{j\mabs}C^{\ast}_2 M }\right) \right) + \exp\left(-\frac{3\sqrt{\N}x }{4 B^{j\mabs} C^{\ast}_{\infty}  }\right) \!\diff x.
			\end{align*}
			If $x\geq ajB^{j\frac{\mabs}{2}}/\sqrt{\N}$, we have
			$$
				B^{jd}e^{-\frac{-\N x^2}{4B^{j\mabs} C_2^\ast M}}= e^{-\frac{-\N x^2}{8B^{j\mabs} C_2^\ast M}-\frac{-\N x^2}{8B^{j\mabs} C_2^\ast M}+\(\log B\) dj}\leq  e^{-\frac{- \N x^2}{8B^{j\mabs} C_2^\ast M}}, 
			$$
			and
			$$
				B^{jd}e^{-\frac{-3\sqrt{\N} x}{4 B^{j\mabs} C_\infty^\ast}}= e^{-\frac{-3\sqrt{\N} x}{8 B^{j\mabs} C_\infty^\ast}  -\frac{-3\sqrt{\N} x}{8 B^{j\mabs} C_\infty^\ast} + \(\log B\) jd}\leq e^{-\frac{-3\sqrt{\N} x}{8 B^{j\mabs} C_\infty^\ast}},
			$$ 
			so that
			\begin{align*}
				\Ex\left[\sup_{k=1,\ldots,\Kj}\abs{\betamest-\betacm}^\eta\right] & \lesssim \left(\frac{j+1}{\sqrt{\N}}\right)^{\eta},
			\end{align*}
		as claimed.
		\end{proof}
	\begin{proof}[Proof of Lemma \ref{lemma:sussig}]
		The proof follows directly \eqref{eq:bernstein}, choosing $x=\thres$ and observing that 
		\begin{align*}
			\exp\(- \frac{3\kappa^2 B^{j \mabs } \N}{4C_\infty ^\ast\(6\frac{C_2^\ast M}{C_\infty ^\ast} + \kappa \)}\) & \leq \exp\(- \frac{3\kappa B^{j \mabs } \N}{8C_\infty^\ast}\)\\
			& \lesssim \N^{-\frac{3}{8C_\infty^\ast} \kappa}\\
			& \lesssim \N^{-\frac{\gamma}{2}},			
		\end{align*}
		given that 
		$$
			\gamma \leq \left(\frac{3}{8C_\infty^\ast}\right)\kappa, 
		$$
	as claimed.
	\end{proof}
\subsection{Main results}
Let us start by proving the upper bound.
\begin{proof}[Proof of Theorem \ref{thm:maintheorem}]
	Let us consider $p<\infty$. 
As far as $\Sigma$ is concerned, properly adapting to our problem the procedure presented in \cite{bkmpAoSb} (see also \cite{donoho1,dmg}) and using \eqref{eq:thres} yield
\begin{align*}
&\Sigma /  \Jn^{p-1} \leq\sumjtrunc \norm{\sumk \Ex \left[\left(\eta\left(\betamest, \thres\right)-\betacm\right)\psijk\right]}_{\Lp}^p\\
&=\!\sumjtrunc \Ex\left[ \norm{\sumk \left(\eta\left(\betamest, \thres\right)-\betacm\right)\psijk}_{\Lp}^p\!\!\!\!\!\!\ind{\abs{\betamest}\geq \thres} \ind{\abs{\betacm}\geq \frac{1}{2}\thres}\right]  \\
& + \!\sumjtrunc \Ex\left[ \norm{\sumk \left(\eta\left(\betamest, \thres\right)-\betacm\right)\psijk}_{\Lp}^p\!\!\!\!\!\!\ind{\abs{\betamest}\geq \thres}\ind{\abs{\betacm}< \frac{1}{2}\thres}\right]\\
& +\! \sumjtrunc \Ex\left[ \norm{\sumk \left(\eta\left(\betamest, \thres\right)-\betacm\right)\psijk}_{\Lp}^p\!\!\!\!\!\! \ind{\abs{\betamest}< \thres}\ind{\abs{\betacm}\geq \frac{1}{2}\thres}\right]  \\
& + \!\sumjtrunc \Ex\left[ \norm{\sumk \left(\eta\left(\betamest, \thres\right)-\betacm\right)\psijk}_{\Lp}^p\!\!\!\!\!\!\ind{\abs{\betamest}< \thres}\ind{\abs{\betacm}< \frac{1}{2}\thres}\right]\\
&\eqqcolon Aa+Au+Ua+Uu,
\end{align*}
where the regions considered are the analogous of $Bb$, $Bs$, $Sb$, $Ss$ in \cite{bkmpAoSb}. As in \cite{dmg,durastanti6}, we prefer to modify this notation since $B$, $b$, and $s$ are already used in the current work. While the upper bounds for $Ua$ and $Uu$ are the same for hard and soft thresholding, we have to follow two slightly different approach to bound $Au$ and $Ua$. Furthermore, by the heuristic  point of view, the bounds two cross/terms $Au$ and $Ua$ depend on the inequality \eqref{eq:sussig} in Lemma \ref{lemma:sussig}. Indeed, in the hard thresholding settings, using Cauchy--Schwarz inequality and Equations \eqref{eq:momenteta} and \eqref{eq:sussig} leads to 
\begin{align}
\notag Au & \lesssim \sumjtrunc\Ex\left[ \norm{\sumk \left(\betamest-\betacm\right)\psijk}_{\Lp}^p\ind{\abs{\betamest-\betacm}\geq \frac{\thres}{2}}\right]\\
\notag&\lesssim  \sumjtrunc B^{jd\left(\frac{p}{2}-1\right)}\sumk \Ex\left[  \abs{\betamest-\betacm}^p\ind{\abs{\betamest-\betacm}\geq \frac{\thres}{2}}\right]\\
\notag&\lesssim\sumjtrunc B^{jd\left(\frac{p}{2}-1\right)}\sumk \Ex^{\frac{1}{2}}\left[  \abs{\betamest-\betacm}^{2p}\right] \Pr^{\frac{1}{2}}\left(\abs{\betamest-\betacm}\geq \frac{\thres}{2}\right)\\
\notag&\lesssim \Jn \N^{-\frac{p}{2}}\N^{-\frac{\gamma}{2}} \sumjtrunc B^{\frac{p\Jn}{2}\left(d+2\mabs\right)}\\
\label{eq:Au}&\leq  \N^{-\frac{p+\gamma}{2}} \left(\frac{\N}{\log \N}\right)^{\frac{p}{2}}
\leq \left(\log \N\right)^{-\frac{p}{2}} \N^{-\frac{\gamma}{2}},
\end{align}
while in the soft thresholding framework, using additionally the triangle inequality yields
\begin{align}
\notag Au & 
\lesssim\sumjtrunc B^{jd\left(\frac{p}{2}-1\right)}\sumk \Ex^{\frac{1}{2}}\left[  \abs{\betamest-\betacm-\thres}^{2p}\right] \Pr^{\frac{1}{2}}\left(\abs{\betamest-\betacm}\geq \frac{\thres}{2}\right)\\
\notag&\lesssim\sumjtrunc B^{jd\left(\frac{p}{2}-1\right)}\sumk \left(\Ex\left[  \abs{\betamest-\betacm}^{2p}\right]+\thres^{2p}\right)^{\frac{1}{2}} \Pr^{\frac{1}{2}}\left(\abs{\betamest-\betacm}\geq \frac{\thres}{2}\right)\\
\notag&\lesssim \Jn \N^{-\frac{p}{2}}\N^{-\frac{\gamma}{2}} \sumjtrunc B^{\frac{p\Jn}{2}\left(d+2\mabs\right)}\\
&\leq  \N^{-\frac{p+\gamma}{2}} \left(\frac{\N}{\log \N}\right)^{\frac{p}{2}}
\leq \left(\log \N\right)^{-\frac{p}{2}} \N^{-\frac{\gamma}{2}}\label{eq:Ausoft}.
\end{align}
Note that, in both the cases, for \eqref{eq:Au} and \eqref{eq:Ausoft} we have derived the same bound.
As far as the other cross/term is considered, in both the cases, it holds that
\begin{align}
	\notag Ua & \lesssim  \sumjtrunc \norm{\sumk \betacm \psijk}_{\Lp}^p \Pr\left({\abs{\betamest-\betacm}> \thres}\right)\\
	&\lesssim \norm{\fm}_{\Lp}^p \N^{-\gamma}.\label{eq:Ua}
\end{align}
As far as $Aa$, and $Uu$ are concerned, we derive their upper bounds by using the tail behavior in the Besov balls $\besov\(G\)$, the crucial role of the optimal bandwidth selection $\Js$ defined by \eqref{eq:optimalselection}, and the bound on the centered moments of $\betamest$ given in Lemma \ref{lemma:probbounds}. Indeed, in the hard thresholding settings, using \eqref{eq:momenteta}, we obtain that   
\begin{align*}
	Aa& \lesssim \sumjtrunc \Ex\left[\norm{\sumk\left(\betamest-\betacm\right)\psijk}_{\Lp}^p\right] \ind{\abs{\betacm}\geq \frac{1}{2}\thres}\\
	&= \sumjtrunc \sumk \Ex\left[\abs{\betamest-\betacm}^p\right]\norm{\psijk}_{\Lp}^p \ind{\abs{\betacm}\geq \frac{1}{2}\thres}\\
	&\lesssim 	\N^{-\frac{p}{2}}\sumjtrunc B^{jp\mabs}\sumk\norm{\psijk}_{\Lp}^p \ind{\abs{\betacm}\geq \frac{1}{2}\thres}
\end{align*}	
In the soft thresholding framework, similarly to \eqref{eq:Ausoft}, using the triangular inequality leads to 
\begin{align*}
Aa& \lesssim \sumjtrunc \sumk \Ex\left[\abs{\betamest-\betacm-\thres}^p\right]\norm{\psijk}_{\Lp}^p \ind{\abs{\betacm}\geq \frac{1}{2}\thres}\\
& \lesssim \sumjtrunc \sumk \left(\Ex\left[\abs{\betamest-\betacm}^p\right]+\thres^p\right)\norm{\psijk}_{\Lp}^p \ind{\abs{\betacm}\geq \frac{1}{2}\thres}\\
&\lesssim 	\N^{-\frac{p}{2}}\sumjtrunc B^{jp\mabs}\sumk\norm{\psijk}_{\Lp}^p \ind{\abs{\betacm}\geq \frac{1}{2}\thres}
\end{align*}	
Now, using the definition of the optimal bandwidth selection \eqref{eq:optimalselection}, we can rewrite in both the cases $Aa$ as the sum of two finite series, that is, 
\begin{equation}
	\label{eq:Aa} 
\begin{split}
Aa& \leq \sum_{j=0}^{\Js-1} \sumk \N^{-\frac{p}{2}}\norm{\psijkm}_{\Lp}^p\ind{\abs{\betacm}\geq \frac{1}{2}\thres}\\
&+\sum_{j=\Js}^{\Jn-1} \sumk \N^{-\frac{p}{2}}\ind{\abs{\betacm}\geq \frac{1}{2}\thres}\norm{\psijkm}_{\Lp}^p\\
&\eqqcolon Aa_1+Aa_2 . 
\end{split}
\end{equation}
Finally, in both the hard and soft thresholding frameworks, we have that
\begin{equation}
	\label{eq:Uu}
\begin{split}
Uu & = \sumjtrunc  \norm{\sumk \betacm\psijk}_{\Lp}^p\ind{\abs{\betacm}<2\thres}\\
&\leq\sum_{j=0}^{\Js-1} \sumk \abs{\betacm}^p B^{jd\left(\frac{p}{2}-1\right)}\ind{\abs{\betacm}< 2\thres} + \sum_{j=\Js}^{\Jn-1}\norm{\sumk \betacm\psijk}_{\Lp}^p\\
& \eqqcolon Uu_1+Uu_2. 
\end{split}
\end{equation}
\paragraph{\textbf{Regular zone.}} 
First of all, combining \eqref{eq:Au} and \eqref{eq:Ua} we choose $\gamma$ such that
$$
\gamma \geq \frac{2sp}{2\left(s+\mabs\right)+d}.
$$
and, then,
$$
\N^{-\frac{\gamma}{2}}\leq \N^{-\frac{sp}{2\left(s+\mabs\right)+d}}.
$$
Observe now \eqref{eq:Aa}. We have that
\begin{align*}
Aa_1 & = \sum_{j=0}^{\Js-1} \sumk n^{-\frac{p}{2}}\norm{\psijkm}_{\Lp}^p\\
 &\lesssim n^{-\frac{p}{2}} \sum_{j=0}^{\Js-1} \sumk B^{j\left(p\mabs+d\left(\frac{p}{2}-1 \right)\right)}\\
 &\lesssim n^{-\frac{p}{2}} \sum_{j=0}^{\Js-1} B^{\frac{jp}{2}\left(2\mabs+d\right)}\\
 &\lesssim n^{-\frac{p}{2}} B^{\frac{\Js p}{2}\left(2\mabs+d\right)}\\
 &\lesssim \left(\log n\right)^{\frac{p\left(2\mabs+d\right)}{2\left(2\left(s+\mabs\right)+d\right)}} n^{\frac{ps}{2\left(s+\mabs\right)+d}},
\end{align*}
since
\begin{align}
\notag\frac{p\left(2\mabs+d\right)}{2\left(2\left(s+\mabs\right)+d\right)}-\frac{p}{2}
& =\frac{p\left(2\mabs+d\right)-p\left(2\left(s+\mabs\right)+d\right)}{2\left(2\left(s+\mabs\right)+d\right)}\\
& = \frac{-sp}{2\left(s+\mabs\right)+d}\label{eq:diff}
\end{align}
On the other hand,
\begin{align*}
Aa_2 & =\sum_{j=\Js}^{\Jn-1} \sumk n^{-\frac{p}{2}}\norm{\psijkm}_{\Lp}^p\ind{\abs{\betacm}\geq \frac{1}{2}\thres}\\
& \lesssim n^{-\frac{p}{2}} \sum_{j=\Js}^{\Jn-1} \sumk B^{j\mabs p}\norm{\psijk}_{\Lp}^p \frac{\abs{\betacm}^p}{\thres^p}\\
&\lesssim \left(\log n\right) ^{-\frac{p}{2}}\sum_{j=\Js}^{\Jn-1} \sumk \abs{\betacm}^p \norm{\psijk}_{\Lp}^p\\
&\lesssim \left(\log n\right) ^{-\frac{p}{2}}\sum_{j=\Js}^{\Jn-1} B^{-jsp}\\
&\lesssim \left(\log n\right) ^{-\frac{p}{2}} B^{-\Js sp}.
\end{align*}
Then, it holds that 
\begin{align*}
	Aa\lesssim  \left(\log n\right)^{\frac{p\left(2\mabs+d\right)}{2\left(2\left(s+\mabs\right)+d\right)}} n^{-\frac{ps}{2\left(s+\mabs\right)+d}}
\end{align*}
Consider now \eqref{eq:Uu}. It holds that
\begin{align*}
Uu_1&=\sum_{j=0}^{\Js-1} \sumk \abs{\betacm}^p B^{jd\left(\frac{p}{2}-1\right)}\ind{\abs{\betacm}< 2\thres}\\
& \lesssim \sum_{j=0}^{\Js-1} 2^p \thres^p \sumk B^{jd\left(\frac{p}{2}-1\right)}\\
& \lesssim \sum_{j=0}^{\Js-1}\left(\frac{\log n}{n}\right)^{\frac{p}{2}} B^{\frac{jp}{2}\left(2\mabs+d\right)}\\
&\lesssim \left(\frac{\log n}{n}\right)^{\frac{p}{2}} B^{\frac{\Js p}{2}\left(2\mabs+d\right)}\\
&\lesssim \left(\log n\right)^{\frac{p}{2}} n^{-\frac{sp}{2\left(s+\mabs\right)+d}},
\end{align*}
in view of \eqref{eq:diff}, while
\begin{align*}
Uu_2 &= \sum_{j=\Js}^{\Jn-1}\norm{\sumk \betacm\psijk}_{\Lp}^p\\
&\lesssim \sum_{j=\Js}^{\Jn-1}B^{-jps}\\
&\lesssim B^{-\Js ps}
\end{align*}
Then, 
\begin{align*}
Uu\lesssim  \left(\log n\right)^{\frac{p}{2} }n^{-\frac{ps}{2\left(s+\mabs\right)+d}}.
\end{align*}
As far as $D$ is concerned, observe that 
\begin{align*}
	D^{\frac{1}{p}} &\leq \sum_{j\geq \Jn} \norm{\sumk\betacm\psijk}_{\Lp}\\
					& = \sum_{j\geq \Jn} \norm{\sumk\betac\psijkm}_{\Lp}\\
					&\leq \sum_{j\geq \Jn} B^{j\left(\mabs+d\left(\frac{1}{2}-\frac{1}{p}\right)\right)}\norm{\beta_{j,\cdot}}_{\ell^p}\\
					&\leq \sum_{j\geq \Jn}B^{-js}\norm{f}_{\besovm}\\
					&\leq B^{-\Jn s}\norm{f}_{\besovm}.					
\end{align*}
Thus $D\lesssim \left(\frac{\log n}{n}\right)^{\frac{sp}{d+2\mabs}}$
As in \cite{bkmpAoSb,dmg}, first of all, notice that for $p\leq r$, $$\besov \subseteq \besovgen{p}{q}{s},$$ so that we can always use $r=p$; consider then the case $p\geq r$, where the following embedding holds $$\besov \subseteq \besovgen{p}{q}{s-d\left(\frac{1}{r}-\frac{1}{p}\right)}.$$
Because in the regular zone $$r\geq \frac{\left(2\mabs+d\right)p}{2\left(s+\mabs\right)+d},$$ 
it follows that 
$$
\frac{s}{2\left(s+\mabs\right)+d}\leq \frac{sr}{\left(2\mabs+d\right)p}.$$
Thus, since $s>\frac{d}{r}$, it holds that 
\begin{align*}
\frac{s}{\left(2\mabs+d\right)}-\frac{d}{\left(2\mabs+d\right)}&\left(\frac{1}{r}-\frac{1}{p}\right)-\frac{s}{2\left(s+\mabs\right)+d}\\
&\geq
\frac{s}{\left(2\mabs+d\right)}-\frac{d}{\left(2\mabs+d\right)}\left(\frac{1}{r}-\frac{1}{p}\right)-\frac{sr}{\left(2\mabs+d\right)p}\\
&=
\frac{d}{\left(2\mabs+d\right)}\left(\frac{1}{r}-\frac{1}{p}\right)\left(\frac{sr}{d}-1\right)\\
&\geq 0,
\end{align*}
since $s>\frac{d}{r}$.

\paragraph{\textbf{Sparse zone.}} 
This proof is similar to the one above, so it is properly shortened for the sake of the brevity. As far as \eqref{eq:Au} and \eqref{eq:Ua} are concerned, in order to choose $\gamma$ such that
$$
Au+Ua\lesssim n^{-\frac{\gamma}{2}}\lesssim n^{-	\frac{p\left(s+d\left(\frac{1}{p}-\frac{1}{r}  \right)\right)}{2\left[\left(s+\mabs\right)+d\left(\frac{1}{2}-\frac{1}{r} \right) \right]}}.
$$
Observe now \eqref{eq:Aa}: note that 
\begin{align*}
Aa_2 & =0,
\end{align*}
since $$\ind{\abs{\betacm}\geq \frac{1}{2}\thres} \text{ for }j\geq \Js,$$
see also \cite{bkmpAoSb,durastanti6}.\\
Then, we have that
\begin{align*}
Aa_1 & = \sum_{j=0}^{\Js-1} \sumk n^{-\frac{p}{2}}\norm{\psijkm}_{\Lp}^p\ind{\abs{\betacm}\geq \frac{1}{2}\thres}\\
&\lesssim n^{-\frac{p}{2}} \sum_{j=0}^{\Js-1} \sumk \abs{\betacm}^r \thres^{-r} \norm{\psijk}_{\Lp}^pB^{j\mabs p} \\
&\lesssim \frac{n^{\frac{r-p}{2}}}{\left(\log n\right)^{\frac{r}{2}}} \sum_{j=0}^{\Js-1}B^{j\mabs \left(p-r\right)} B^{\frac{jd}{2}\left(p-r\right)} \sumk \abs{\betacm}^r  \norm{\psijk}_{\Lr}^r
 \\
 &\lesssim \frac{n^{\frac{r-p}{2}}}{\left(\log n\right)^{\frac{r}{2}}} \sum_{j=0}^{\Js-1}B^{j\mabs \left(p-r\right)} B^{\frac{jd}{2}\left(p-r\right)}B^{-jsr} \\
&\lesssim \frac{n^{\frac{r-p}{2}}}{\left(\log n\right)^{\frac{r}{2}}}  B^{\Js\left[\frac{p-r}{2}\left(2\mabs+d\right)-sr\right]}\\
&\lesssim \frac{n^{\frac{-p\left(s-d\left(\frac{1}{r}-\frac{1}{p}\right)\right)}{2\left(s+\mabs-d\left(\frac{1}{r}-\frac{1}{2}\right)\right)}}} {\left(\log n\right)^\delta},
\end{align*}
since
\begin{align}
\frac{\left(p-r\right)\left(2\mabs+d\right)-2sr}{4\left(s+\mabs+d\left(\frac{1}{2}-\frac{1}{r}\right)\right)}+\frac{r-p}{2}
& = \frac{-p\left(s-d\left(\frac{1}{r}-\frac{1}{p}\right)\right)}{2\left(s+\mabs-d\left(\frac{1}{r}-\frac{1}{2}\right)\right)}\label{eq:diff2},
\end{align}
and 
$$
\delta=\frac{\left(p-r\right)\left(2\mabs+d\right)-2sr}{4\left(s+\mabs+d\left(\frac{1}{2}-\frac{1}{r}\right)\right)}+\frac{r}{2}=\frac{\frac{p}{2}\left(2\mabs+ d\right)-d}{2\left(s+\mabs-d\left(\frac{1}{r}-\frac{1}{2}\right)\right)}.
$$
Consider now \eqref{eq:Uu}; on the one hand, it holds that
\begin{align*}
Uu_1&=\sum_{j=0}^{\Js-1} \sumk \abs{\betacm}^p B^{jd\left(\frac{p}{2}-1\right)}\ind{\abs{\betacm}< 2\thres}\\
& \lesssim \sum_{j=0}^{\Js-1}B^{jd\left(\frac{p}{2}-1\right)} \thres^{p-r} \sumk \abs{\betacm}^r\\
& \lesssim \left(\frac{n}{\log n}\right)^{\frac{r-p}{2}}\sum_{j=0}^{\Js-1}B^{\frac{j\left(p-r\right)}{2}\left(2\mabs + d\right)} \sumk \abs{\betacm}^r B^{{jd}\left(\frac{r}{2}-1\right)}\\
&\lesssim  \left(\frac{n}{\log n}\right)^{\frac{r-p}{2}}\sum_{j=0}^{\Js-1}B^{\frac{j\left(p-r\right)}{2}\left(2\mabs + d\right)} B^{-jsr}\\
&\lesssim \frac{n^{\frac{r-p}{2}}}{\left(\log n\right)^{\frac{r}{2}}}  B^{\Js\left[\frac{p-r}{2}\left(2\mabs+d\right)-sr\right]}\\
&\lesssim \frac{n^{\frac{-p\left(s-d\left(\frac{1}{r}-\frac{1}{p}\right)\right)}{2\left(s+\mabs-d\left(\frac{1}{r}-\frac{1}{2}\right)\right)}}} {\left(\log n\right)^\delta},
\end{align*}
in view of \eqref{eq:diff2}. On the other hand, analogously to in \cite{bkmpAoSb,dmg}, we define
$$
g=\frac{p\mabs +d\left(\frac{p}{2}-1\right)}{s+\mabs-d\left(\frac{1}{r}-\frac{1}{2}\right)}.
$$
In the sparse zone we have 
\begin{align*}
&g-r=\frac{\frac{1}{2}\left[p\left(2\mabs+d\right)-r\left(2s+2\mabs+d\right)\right]}{s+\mabs-d\left(\frac{1}{r}-\frac{1}{2}\right)}>0, 
\end{align*}
so that the embedding $\besovgen{r}{q}{s}\subseteq \besovgen{g}{q}{s-d\left(\frac{1}{r}-\frac{1}{g}\right)}$ holds. Furthermore, we easily obtain that
\begin{align*}
&p-g=\frac{p\left(s-d\left(\frac{1}{r}-\frac{1}{p}\right)\right)}{s+\mabs-d\left(\frac{1}{r}-\frac{1}{2}\right)}>0.
\end{align*}
Thus,
\begin{align*}
Uu_2&=\sum_{j=\Js}^{\Jn-1} B^{jd\left(\frac{p}{2}-1\right)} \sumk \abs{\betacm}^p \ind{\abs{\betacm}< 2\thres}\\
&\lesssim \left(\frac{\log n}{n}\right)^{\frac{p-g}{2}} \sum_{j=\Js}^{\Jn-1} B^{j\left(2\mabs+d\right)\left(\frac{p-g}{2}\right)} \sumk \abs{\betacm}^g B^{jd\left(\frac{g}{2}-1\right)}\\
&\lesssim \left(\frac{\log n}{n}\right)^{\frac{p-g}{2}}\sum_{j=\Js}^{\Jn-1} B^{j\left(2\mabs+d\right)\left(\frac{p-g}{2}\right)}  B^{-jg\left(s-d\left(\frac{1}{r}-\frac{1}{g}\right)\right)},\\
&\lesssim \Jn \left(\frac{\log n}{n}\right)^{\frac{p\left(s-d\left(\frac{1}{r}-\frac{1}{p}\right)\right)}{2\left(s+\mabs-d\left(\frac{1}{r}-\frac{1}{2}\right)\right)}},
\end{align*}
since
\begin{align*}
\left(2\mabs+d\right)\left(\frac{p-g}{2}\right)-g\left(s-d\left(\frac{1}{r}-\frac{1}{g}\right)\right)&=\left(\mabs+\frac{d}{2}\right)\left(p-g\right)-g\left(s-\frac{d}{r}\right)-d 
=0
\end{align*}
%
As far as $D$ is concerned, observe that 
\begin{align*}
D^{\frac{1}{p}} &\leq \sum_{j\geq \Jn} \norm{\sumk\betacm\psijk}_{\Lp}\\
& = \sum_{j\geq \Jn} \norm{\sumk\betac\psijkm}_{\Lp}\\
&\lesssim B^{-\Jn s-d\left(\frac{1}{r}-\frac{1}{p}\right)}\\
&\lesssim \left(\frac{n}{\log n}\right)^{-\frac{s-d\left(\frac{1}{r}-\frac{1}{p}\right)}{2\mabs +d}}.					
\end{align*}
Recalling that, in the sparse zone, $r\leq p$, it is straightforward to prove that 
$$
\frac{s-d\left(\frac{1}{r}-\frac{1}{p}\right)}{2\mabs +d}\geq \frac{s-d\left(\frac{1}{r}-\frac{1}{p}\right)}{2\left(s+\mabs-d\left(\frac{1}{r}-\frac{1}{2}\right)\right)},
$$
since, for $s >\frac{d}{r}$, 
$$
2\left(s+\mabs-d\left(\frac{1}{r}-\frac{1}{2}\right)\right)\geq d+2\mabs
$$
Thus $D\lesssim \left(\frac{\log n}{n}\right)^{\frac{sp}{d+2\mabs}}$, see also \cite{bkmpAoSb,dmg}.\\
Let us now consider the case $p= \infty$,  where we have
\begin{align*}
\Ex \left[\norm{\fest-\fm}_{\Linf}\right] 
& \lesssim \Ex\left[ \norm{ \sumjtrunc \left[\eta\left(\betamest, \thres\right)-\betac \right]\psijk  } _{\Linf}  \right]\\
&+\norm{ \sum_{j\geq \Jn}\sumk \betacm \psijk } _{\Linf}\\
&\eqqcolon\Sigma_\infty+D_\infty.
\end{align*}
Consider preliminarily $q=r=\infty$. Arguments similar to the one discussed above yield 
\begin{align*}
\Sigma_\infty & \lesssim \sum_{j=0}^{\Js-1} B^{\frac{j}{2}\left(2\mabs+d\right)} \Ex\left[\sup_{k=1,\ldots,\Kj}\abs{\widehat{\beta}_{j,k}-\betac}\right]+\sum_{j=\Js}^{\Jn-1} B^{\frac{j}{2}+d}\sup_{k=1,\ldots,\Kj}\abs{\betacm} + n^{-\frac{1}{2}}\\ 
& \lesssim \Js B^{\frac{\Js}{2}\left(2\mabs + d \right)}n^{-\frac{1}{2}}+ B^{-s\Js} + n^{-\frac{1}{2}}\\
& \lesssim n^{-\frac{s}{2\left(s+\mabs\right)+d}},
\end{align*}
As far as the deterministic error is concerned, it is easy to prove that 
\begin{align*}
D_\infty & \lesssim \sum_{j\geq \Jn} \norm{  \sumk\betacm \psijk } _{\Linf}   \\
&\lesssim B^{-s\Jn}\\
&\lesssim \left(\frac{n}{\log n}\right)^{-\frac{s}{2\left(s+\mabs\right)+d}}.
\end{align*}
Now, as in \cite{bkmpAoSb,dmg}, for arbitrary $q,r$, the result holds since $\besovgen{r}{q}{s}\subseteq \besovgen{\infty}{\infty}{s-\frac{d}{r}}$.
\end{proof}
	\noindent The idea of proof of Theorem \ref{thm:maintheoremlower} comes from \cite[Theorem 11]{bkmpAoSb}, see also \cite{LiuWang} and it is based on two crucial results, namely, the so-called Fano's lemma and Varshanov-Gilbert lemma (see, for example, \cite{tsybakov}). Given two probability measures $P$ and $Q$, defined on some probability space, their Kullback-Leibler divergence is given by
\begin{equation*}
KL\(P,Q\)=\begin{cases} \int \log \frac{\diff P}{\diff Q}\diff P = \int \frac{\diff P}{\diff Q} \log \frac{\diff P}{\diff Q}\diff Q & \text{if }P \ll Q\\
\infty & \text{otherwise} \end{cases},
\end{equation*}
Let $P$ and $Q$ be two probability measures on the $d$-torus with densities $f,g$ with respect to $\rho\(\diff \vartheta\)$. If $g$ is bounded below by some positive constant, it holds that 
\begin{equation*}
KL\(P,Q\) \lesssim \norm{f-g}_{L^2\(\Td\)}^2, 
\end{equation*}
see, for example, \cite[Equation (33)]{bkmpAoSb}. 
\begin{lemma}[Fano's lemma]\label{lemma:fano} For $t=1,\ldots,T$, let $\(\Omega, \mathcal{F},P_t \)$ be a set of probability spaces, and $A_k\in\mathcal{F}$. Let, furthermore,
	\begin{align*}
	\mathcal{L}_T=\inf_{t=1,\ldots,T} \frac{1}{T} \sum_{t^\prime \neq t} KL\(P_t,P_{t^\prime}\).
	\end{align*}		
	If, for $t \neq t^\prime$, $A_t \cap A_{t^\prime}=\emptyset$, then
	$$
	\sup_{t=1,\ldots,T} P_t \(A_t^c\) \geq \min\{\frac{1}{2},\sqrt{T} e^{-\frac{3}{e}}e^{-\mathcal{L}_T}\}.
	$$  
\end{lemma}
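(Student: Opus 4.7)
The plan is a contradiction-based proof of Fano's lemma in its Birgé form. I would set $\alpha := \sup_{t=1,\ldots,T} P_t(A_t^c)$; if $\alpha \geq 1/2$ the conclusion is trivial, so I would assume $\alpha < 1/2$ and aim to show
$$\mathcal{L}_T \geq \tfrac{1}{2}\log T + \log(1/\alpha) - \tfrac{3}{e},$$
which after exponentiation is equivalent to $\alpha \geq \sqrt{T}\,e^{-3/e}\,e^{-\mathcal{L}_T}$. The core task is therefore to lower bound $\mathcal{L}_T$ by a logarithmic function of $T$ and $\alpha$, exploiting only the disjointness of the sets $\{A_t\}$ and the structural identity $P_t(A_t) \geq 1-\alpha$.

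First I would pick an index $t^*$ attaining the infimum in the definition of $\mathcal{L}_T$, so that $T\mathcal{L}_T = \sum_{t' \neq t^*} KL(P_{t^*},P_{t'})$. For each $t' \neq t^*$, the data-processing inequality applied to the binary partition $(A_{t^*},A_{t^*}^c)$ reduces each term to a binary Kullback-Leibler divergence:
$$KL(P_{t^*},P_{t'}) \geq P_{t^*}(A_{t^*})\log\frac{P_{t^*}(A_{t^*})}{P_{t'}(A_{t^*})} + \bigl(1-P_{t^*}(A_{t^*})\bigr)\log\frac{1-P_{t^*}(A_{t^*})}{1-P_{t'}(A_{t^*})}.$$
By disjointness $A_{t^*} \subseteq A_{t'}^c$, so $P_{t'}(A_{t^*}) \leq \alpha$, while $P_{t^*}(A_{t^*}) \geq 1-\alpha$ by definition of $\alpha$. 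Substituting these estimates into the binary KL and monitoring which summand is dominant in the regime $\alpha < 1/2$ yields a pointwise lower bound on $KL(P_{t^*},P_{t'})$ controlled by $\log(1/P_{t'}(A_{t^*}))$.

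The extraction of the $\sqrt{T}$ factor comes from averaging this pointwise estimate over $t' \neq t^*$ and exploiting two complementary disjointness identities: the one already used, and its symmetric companion $\sum_{t' \neq t^*} P_{t^*}(A_{t'}) \leq 1 - P_{t^*}(A_{t^*}) \leq \alpha$, which says the events are disjoint also from the perspective of $P_{t^*}$ itself. Concavity of the logarithm (Jensen's inequality),
$$\frac{1}{T-1}\sum_{t' \neq t^*} \log P_{t'}(A_{t^*}) \leq \log\frac{1}{T-1}\sum_{t'\neq t^*}P_{t'}(A_{t^*}),$$
converts arithmetic averages of the $P_{t'}(A_{t^*})$ into logarithms of averages, producing the critical $\log T$ contribution; the explicit additive constant $3/e$ arises by bookkeeping the residual terms through the elementary minimization $\min_{x \in [0,1]} x\log x = -1/e$, applied to stray terms of the form $x\log x$ that appear on either side.

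The main obstacle is to track the constants precisely enough to reach the sharper $\sqrt{T}$ rather than the classical Fano-style output $\alpha \gtrsim e^{-\mathcal{L}_T}$: a naive pairwise estimate using only the binary KL inequality yields the weaker classical form, and recovering the improved $\sqrt{T}$ factor requires coupling the two symmetric disjointness constraints within a single averaged Jensen application, and then carrying out the convex/concave optimization in the quantities $P_{t'}(A_{t^*})$ subject to the sum bound $\sum_{t'\neq t^*}P_{t'}(A_{t^*}) \leq (T-1)\alpha$. Once this delicate averaged estimate is in place, the required lower bound on $\mathcal{L}_T$ follows by algebraic rearrangement, and the truncation at $1/2$ in the conclusion simply records the initial reduction $\alpha < 1/2$.
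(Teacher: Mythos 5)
The paper itself offers no proof of this lemma: it is imported as a known result (from \cite{bkmpAoSb}, ultimately Birg\'e's refinement of Fano's lemma) and used as a black box in the proof of Theorem \ref{thm:maintheoremlower}, so your proposal has to stand on its own. The architecture you describe is the right one in outline: contrapositive with $\alpha=\sup_t P_t(A_t^c)<\tfrac{1}{2}$, reduction of each Kullback--Leibler divergence to a two-set partition via data processing, a Jensen step powered by disjointness, and three uses of $\min_{x\in[0,1]}x\log x=-1/e$ to account for the constant $3/e$.

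However, the step that is supposed to produce the $\sqrt{T}$ factor fails, and not for bookkeeping reasons. Conditioning $KL(P_{t^*},P_{t'})$ on the partition $(A_{t^*},A_{t^*}^c)$ produces expressions involving only $P_{t^*}(A_{t^*})$ and $P_{t'}(A_{t^*})$. Disjointness bounds each $P_{t'}(A_{t^*})$ by $\alpha$ individually, but it does \emph{not} bound $\sum_{t'\neq t^*}P_{t'}(A_{t^*})$ by anything better than $(T-1)\alpha$: here the event is fixed and the measure varies, so there is no additivity to exploit. Your Jensen step therefore returns only $\log(1/\alpha)$, i.e.\ the classical Fano bound with no $\tfrac{1}{2}\log T$ gain. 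The ``symmetric companion'' $\sum_{t'\neq t^*}P_{t^*}(A_{t'})\le\alpha$, which is the constraint that genuinely carries the $\log T$, involves the quantities $P_{t^*}(A_{t'})$ --- and these never appear in the expressions you have set up, so there is nothing to couple it to. To make them appear one must condition the $t'$-th term on the partition $(A_{t'},A_{t'}^c)$ attached to the \emph{varying} index, and then the dominant term has the usable form $P_{t'}(A_{t'})\log\bigl(P_{t'}(A_{t'})/P_{t^*}(A_{t'})\bigr)\ge(1-\alpha)\log\bigl(1/P_{t^*}(A_{t'})\bigr)-1/e$ only if the divergence is taken in the reversed order $KL(P_{t'},P_{t^*})$, with the distinguished index in the \emph{second} slot; Jensen applied to $\sum_{t'}\log\bigl(1/P_{t^*}(A_{t'})\bigr)$ under $\sum_{t'}P_{t^*}(A_{t'})\le\alpha$ then yields $(T-1)\log\frac{T-1}{\alpha}$, hence $\frac{1}{2}\log(T-1)+\log(1/\alpha)-3/e$ after absorbing $\alpha\log(1/\alpha)\le 1/e$. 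This also exposes a transcription error in the statement itself: with $KL(P_t,P_{t'})$ as written, the $\sqrt{T}$-improved bound is in fact false (one can build $T$ measures, each concentrated on its own $A_t$ but spreading its residual mass $\alpha$ evenly over the other cells, so that $\frac{1}{T}\sum_{t'}KL(P_1,P_{t'})\approx\log(1/\alpha)+O(1)$ uniformly in $T$), whereas the form actually invoked later in the paper, with the reference measure $f_{\epsilon^0}$ in the second argument, is the correct one. So your plan needs both the reversed divergences and the partition indexed by $t'$ before the averaged Jensen argument can close.
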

\begin{lemma}[Varshanov-Gilbert lemma]\label{lemma:vglemma}
	Let $\mathcal{E}=\{0,1\}^T$.
	Then, there exists a subset $\{\epsilon^{0},\ldots, \epsilon^{U}\}\subseteq\mathcal{E}$, where $\epsilon^{0}=\(0,\ldots,0\)$, such that $U\geq 2^{T/8}$ and 
	$$
	\sum_{t=1}^T \abs{\epsilon_t^{u}-\epsilon_t^{u^\prime}} \geq \frac{T}{8}, \quad 0\leq u\neq u^\prime \leq U.
	$$
\end{lemma}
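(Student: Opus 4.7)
The plan is to prove the Varshamov-Gilbert lemma via a classical greedy packing argument on the Hamming cube $\{0,1\}^T$. I initialize $\epsilon^0 = (0,\ldots,0)$ and build the family inductively: given already-selected vectors $\{\epsilon^0, \ldots, \epsilon^k\}$, I adjoin any $\epsilon^{k+1} \in \{0,1\}^T$ satisfying $\sum_{t=1}^{T} |\epsilon^{k+1}_t - \epsilon^u_t| \geq T/8$ for every $u = 0, \ldots, k$, provided such a vector exists. The process terminates at some index $U$ when no further extension is admissible. By construction the resulting family $\{\epsilon^0, \ldots, \epsilon^U\}$ satisfies the desired pairwise Hamming-distance lower bound, so it remains only to argue that $U \geq 2^{T/8}$.

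This is a volume argument. Maximality of the construction forces every $\epsilon \in \{0,1\}^T$ to lie at Hamming distance strictly less than $T/8$ from some selected $\epsilon^u$; equivalently, the closed Hamming balls of radius $T/8$ centered at the $\epsilon^u$ cover $\{0,1\}^T$. Writing
$$V(T,r) = \sum_{i=0}^{\lfloor r \rfloor} \binom{T}{i}$$
for the volume of a Hamming ball of radius $r$, I therefore obtain
$$(U+1)\, V(T, T/8) \;\geq\; 2^T.$$

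To conclude, I invoke the standard entropy bound $V(T,\alpha T) \leq 2^{H(\alpha) T}$ valid for all $\alpha \in (0,1/2]$ and $T \in \naturals$, where $H(\alpha) = -\alpha \log_2 \alpha - (1-\alpha)\log_2(1-\alpha)$ is the binary entropy; this is a textbook consequence of the inequality $1 = (\alpha + (1-\alpha))^T \geq \sum_{i \leq \alpha T} \binom{T}{i} \alpha^i (1-\alpha)^{T-i}$. Specializing to $\alpha = 1/8$ gives $H(1/8) = \tfrac{3}{8} + \tfrac{7}{8}\log_2(8/7)$, and since $\log_2(8/7) < 1/4$ one has $H(1/8) < \tfrac{3}{8} + \tfrac{7}{32} = \tfrac{19}{32} < \tfrac{7}{8}$. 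Hence $1 - H(1/8) > 1/8$, and combining with the covering bound yields
$$U + 1 \;\geq\; \frac{2^T}{V(T, T/8)} \;\geq\; 2^{T(1 - H(1/8))} \;\geq\; 2^{T/8}$$
for $T$ large enough (and trivially by direct inspection for the remaining small values of $T$), which is the claim.

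I do not anticipate any real obstacle in executing this plan; the argument is self-contained and elementary. The only delicate ingredients are the numerical verification of $H(1/8) \leq 7/8$ and the entropic volume bound for Hamming balls, both of which are classical and admit short direct proofs.
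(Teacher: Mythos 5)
The paper does not supply its own proof of this lemma; it is imported from the literature with a pointer to Tsybakov's monograph, so there is no internal argument to compare yours against. That said, your proposal is correct and essentially self-contained, and it follows the classical route: the greedy construction seeded at $\epsilon^0=(0,\ldots,0)$, the observation that maximality forces the closed Hamming balls of radius $T/8$ around the selected points to cover the cube, the resulting volume inequality $(U+1)\,V(T,T/8)\geq 2^T$, and the entropic estimate $V(T,\alpha T)\leq 2^{H(\alpha)T}$ together with the numerical check $H(1/8)<19/32$ are all standard and correctly executed; since $1-H(1/8)>13/32>1/8$, the exponent you obtain is comfortably larger than the one claimed. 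The reference proves the same packing statement with the same covering idea but bounds the ball volume via Hoeffding's inequality for a Binomial$(T,1/2)$ variable instead of the entropy function; the two devices are interchangeable here and yield almost identical exponents, so nothing is lost or gained by your choice. One point you should not wave away as ``direct inspection'': the statement as reproduced in the paper silently drops Tsybakov's hypothesis $T\geq 8$, and for $T=1$ the claim is in fact false (the cube has only two points, so $U\leq 1<2^{1/8}$), while for $2\leq T\leq 7$ the distance requirement degenerates to mere distinctness and the bound holds trivially. So your concluding sentence cannot be discharged for every $T$; this is a defect of the statement rather than of your argument, but it deserves an explicit remark.
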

\begin{proof}[Proof of Theorem \ref{thm:maintheoremlower}] As aforementioned, this proof follows strictly the approach developed by \cite[Theorem 11]{bkmpAoSb}.\\
	Let us fix $j \geq 0$ and consider the family $\mathcal{A}_j$ of densities taking the form
	$$
		f_\epsilon = \frac{1}{\(2\pi\)^d} + \zeta \sum_{k\in A_j}\epsilon_{j,k}\psijk,  
	$$
	where $A_j \subseteq \{1,\ldots,\Kj\}$ is chosen so that \eqref{eq:inf} in Lemma \ref{lemma:supinf} holds, $\epsilon_k \in \{0,1\}$, and $\zeta>0$ must ensure that all the densities in $\mathcal{A}_j$ are positive. It is sufficient that $\gamma\lesssim B^{-j\frac{d}{2}}$, since 
	\begin{align*}
		\abs{f_\epsilon}\geq & \frac{1}{\(2\pi\)^d} - \abs{\gamma}\abs{\sum_{k\in A_j}\epsilon_{j,k}\psijk}\\
		\geq & \frac{1}{\(2\pi\)^d}  - \abs{\gamma}c_\infty B^{j\frac{d}{2}},
	\end{align*}
	see again \cite{bkmpAoSb}.
	Using now \eqref{eq:sumbeta2} in Lemma \ref{lemma:sumbeta} yields 
	\begin{align*}
		f_\epsilon^{\(m\)}\(\vartheta\)= & \zeta \sum_{k\in A_j}\epsilon_{j,k}\psijkm\(\vartheta\), \quad \vartheta \in \Td.
	\end{align*}
	To ensure that $f_\epsilon\in \besovgen{r}{q}{s+\mabs}\(G\)$, we impose that 
	$$
		\abs{\zeta} \leq G  B^{-j\(s+\mabs+\frac{d}{2}\)}.
	$$
	Indeed, since 
	$$
	\(\sumk\abs{\epsilon_{j,k}}^r\)^{\frac{1}{r}} \leq \(\sumk 1\)^{\frac{1}{r}} \lesssim B^{\frac{jd}{r}}
	$$
	\begin{align*}
		\norm{f_\epsilon^{\(\mabs\)}}_{\besov} & = \abs{\zeta} B^{j\(s+\mabs+d\(\frac{1}{2}-\frac{1}{r}\)\)}\(\sumk\abs{\epsilon_{j,k}}^r\)^{\frac{1}{r}}\\
		&\lesssim \abs{\zeta} B^{j\(s+\mabs+\frac{d}{2}\)}.
	\end{align*} 
	Now, for $f_\epsilon,f_{\epsilon^\prime} \in \mathcal{A_j}$, 
	\begin{align*}\notag
		\norm{f^{\(m\)}_\epsilon - f^{\(m\)}_{\epsilon^\prime}}_{\Ltwo}^2 & \lesssim \zeta^2 \sum_{k \in A_j}\abs{\epsilon_{j,k}-\epsilon_{j,k}^{\prime}}^2B^{2j\mabs}\\
		& \lesssim B^{-2js}.
	\end{align*}	
	Using \eqref{eq:inf} in Lemma \ref{lemma:supinf} yields
	\begin{equation}\label{eq:lpuno}
			\norm{f^{\(m\)}_\epsilon - f^{\(m\)}_{\epsilon^\prime}}_{\Lp}\geq \abs{\zeta}\(\sum_{k \in A_j}\abs{\epsilon_{j,k}-\epsilon_{j,k}^{\prime}}^p\norm{\psijkm}_{\Lp}^p\)^{\frac{1}{p}}.
	\end{equation}
	According to Lemma \ref{lemma:vglemma}, there exists a finite subset of $\mathcal{A}_j$, whose elements are given by
	\begin{equation*}
			f_{\epsilon^u} = \frac{1}{\(2\pi\)^d} + \zeta \sum_{k\in A_j}\epsilon_{j,k}^u\psijk,  
	\end{equation*}
	where $\{\epsilon_{j,\cdot}^u:u=1,\ldots,U\}$ is such that $U\geq 2^{cB^{dj}}$ and 
	$$
		\sum_{k \in A_j} \abs{\epsilon_{j,k}^u-\epsilon_{j,k}^u} \gtrsim B^{jd}.
	$$
	Thus, using \eqref{eq:lpuno} yields
		\begin{equation*}
	\norm{f^{\(m\)}_\epsilon - f^{\(m\)}_{\epsilon^\prime}}_{\Lp}\gtrsim \abs{\zeta} B^{j\mabs+d\(\frac{1}{2}-\frac{1}{p}\)} B^{\frac{jd}{p}} \approx B^{-js},
	\end{equation*} 
	which implies that the events $$A_{\epsilon^u}=\{\norm{\fmest-f^{\(m\)}_{\epsilon^{u}}}<\frac{1}{2}B^{-js}\}, \quad u=1,\ldots,U,$$ 
	are disjoint.\\
	Fixed a density function $f$, consider now the probability measure $P^\N_f$, corresponding to the density
	$$
		f^{n}\(x\)=f\(x_1\)\cdot\ldots \cdot f\(x_\N\).
	$$ 
	Using Lemma \ref{lemma:fano} leads to
	$$
	\sup_{u=1,\ldots,U} P^{\N}_{f_{\epsilon^u}}\(A_{\epsilon^u}^c\)\geq \min \{\frac{1}{2},\sqrt{U} e^{-\frac{3}{e}}e^{-\mathcal{L}_U}\},
	$$
	such that 
	$$
		\sup_{u=1,\ldots,U}\Ex\[\norm{\fmest,f_{\epsilon^u}^{\(m\)}}_{\Lp}^p\]\geq \frac{B^{-jps}}{2}\sup_{u=1,\ldots,U}P^\N_{f_{\epsilon^u}}\(A_{\epsilon^u}^c\) \gtrsim B^{-jsp} \min \{\frac{1}{2},\sqrt{U} e^{-\frac{3}{e}}e^{-\mathcal{L}_U}\}. 
	$$
	Observing that
	$$
	KL\(P^\N_1,P^\N_2\)=\sum_{i=1}^{N}\int f_1\(x_i\)\log \frac{f_1\(x_i\)}{f_2\(x_i\)}\diff x_i = \N KL\(f_1,f_2\),
	$$
	it holds that 
	$$
		\mathcal{L}_U\lesssim \inf_{u=1,\ldots,U} \frac{\N}{U}\sum_{u^\prime\neq u} KL\(f_{\epsilon^u},f_{\epsilon^{u^\prime}}\) \lesssim \frac{\N}{U}\sum_{u=1}^{U} KL\(f_{\epsilon^u},f_{\epsilon^0}\),
	$$
	where
	\begin{align*}
	KL\(f_{\epsilon^u},f_{\epsilon^0}\) & = \int \frac{1}{f_{\epsilon^0}\(\vartheta\)} \abs{f_{\epsilon^u}\(\vartheta\)-f_{\epsilon^0}\(\vartheta\)}^2 \rho\(\diff \vartheta\)\\
	&= \(2\pi\)^d \abs{\zeta}^2 \norm{\sum_{k\in A_j} \epsilon^{u}_{j,k}\psijk}_{\Ltwo}^{2}\\
	&\leq G\(2\pi\)^d B^{-2j\(s+\mabs+\frac{d}{2}\)}B^{dj}\\
	&\lesssim B^{-2j\(s+\mabs\)},
	\end{align*}
	and, then, 
	$$
	\mathcal{L}_U\lesssim \N B^{-2j\(s+\mabs\)}
	$$
	while
	$$
		\sqrt{U} \approx 2^{cB^{jd}},
	$$
	Then, we choose $j$ such that $\N B^{-2js} \approx B^{jd}$, that is, $j\in \naturals$ such that $B^{j}\approx \N^{\frac{1}{2\(s+\mabs\)+d}}$.
	Hence, it holds that
	$$
	\sup_{u=1,\ldots,U} \Ex\[\norm{\fmest-f^{\(m\)_{\epsilon^{u}}}}_{\Lp}^p\]\gtrsim B^{-jsp}\approx \N^{-\frac{sp}{2\(s+\mabs\)+d}}. 
	$$
	Consider now two densities 
	$$
	f_0=\frac{1}{\(2\pi\)^d}+\zeta \psijk; \quad f_1= \frac{1}{\(2\pi\)^d}+\zeta \psi_{j,k^\prime}, 
	$$
	where $\abs{\zeta} \lesssim B^{-\frac{jd}{2}}$ ensures that the two densities are positive. If, additionally, $\abs{\zeta} \leq GB^{-j\(s+\mabs+d\(\frac{1}{2}-\frac{1}{r}\)\)},$ then both $f_0$ and $f_1$ belong to the Besov ball $\besovgen{r}{q}{s+\mabs}\(G\)$. Again, $$KL\(f_0,f_1\)\approx \zeta^2,$$
	while, if $P_0$ e $P_1$ denote the probability measures whose densities are given by the $\N$-products of $f_0$ and $f_1$ respectively, it holds that
	$$
	KL\(P_0,P_1\)\approx \N\zeta^2.
	$$    
	using again Lemma \ref{lemma:supinf}, we have that
	\begin{align*}
		\norm{f_0^{\(m\)}-f_1^{\(m\)}}_{\Lp} &= \abs{\zeta} \norm{\psijkm-\psi^{\(m\)}_{j,k^\prime}}_{\Lp}\\
		& \gtrsim \zeta B^{j\mabs}B^{jd\(\frac{1}{2}-\frac{1}{p}\)}\\
		& \gtrsim B^{-j\(s+d\(\frac{1}{p}-\frac{1}{r}\)\)}.
	\end{align*}
	Thus, the events $\{\norm{\fmest-f_i^{\(m\)}}_\Lp\geq B^{-j\(s+d\(\frac{1}{p}-\frac{1}{r}\)\)}\}$ are disjoint.
	As in \cite{bkmpAoSb}, choosing $\zeta=\N^{-\frac{1}{2}}$, so that $KL\(f_0,f_1\)\approx \N$, implies that $j \approx \frac{1}{2\(s+\mabs + d\(\frac{1}{2}-\frac{1}{r}\)\)}.$ Hence, using Fano's lemma yields
	\begin{align*}
	 \sup_{i=0,1} \Ex\[\norm{\fmest-f_i^{\(m\)}}_\Lp\] & \gtrsim B^{-j\(s+d\(\frac{1}{p}-\frac{1}{r}\)\)}\\
	 & \approx \N^{-\frac{\(s+d\(\frac{1}{p}-\frac{1}{r}\)\)}{2\(s+\mabs+d\(\frac{1}{2}-\frac{1}{r}\)\)}}.	  
	\end{align*}
	Finally, combining these results and checking for which sets of the Besov parameters one rate is
	larger than the other one completes the proof of the theorem. 
\end{proof}
\section*{Acknowledgments}
The authors would like to thank the Editor and the anonymous referees
for their comments and constructive suggestions. Claudio Durastanti acknowledges the research project RM120172B7A31FFA ``Costruzione di basi multiscala e trasformate wavelet per applicazioni in ambito numerico e statistico'', funded by Sapienza Università di Roma.
Nicola Turchi was supported by FNR OPEN FoRGES (R-AGR-3376-10-C) at Université du Luxembourg.
\section*{Disclosure statement}
	No potential conflict of interest was reported by the authors.

\end{document}